\newcounter{stepnb}
\tikzstyle{nodo}=[circle,draw,fill,inner sep=0pt,minimum size=%
\tikzstyle{infinito}=[circle,inner sep=0pt,minimum size=0mm]
\newtheorem{theorem}{Theorem}[section]
\newtheorem{lemma}[theorem]{Lemma}
\newtheorem{proposition}[theorem]{Proposition}
\newtheorem{corol}[theorem]{Corollary}
\newtheorem{remark}[theorem]{Remark}
\newtheorem{definition}[theorem]{Definition}
\numberwithin{equation}{section}
\newcommand{\R}{\mathbb{R}}
\DeclareMathOperator*{\esslim}{ess\,lim}
\DeclareMathOperator*{\essinf}{ess\,inf}
\DeclareMathOperator*{\essup}{ess\,sup}
\newcommand{\ee}{\varepsilon}
\newcommand{\be}{\begin{equation}}
\newcommand{\eq}{\end{equation}}
\newcommand{\weaks}{\stackrel{*}{\rightharpoonup}}
\begin{document}

\title[Regularity and source-destination]{New regularity results for scalar conservation laws, and applications to a source-destination model for traffic flows on networks}

\author[S.~Dovetta]{Simone Dovetta}
\address{S.D. Dipartimento di Scienze di Base e Applicate per l'Ingegneria, Università degli Studi di Roma ``La Sapienza", via Antonio Scarpa 14, 00161 Roma, Italy.}
\email{simone.dovetta@uniroma1.it}
\author[E. Marconi]{Elio Marconi}
\address{E.M. EPFL B, Station 8, CH-1015 Lausanne, Switzerland.}
\email{elio.marconi@epfl.ch}
\author[L.~V.~Spinolo]{Laura V.~Spinolo}
\address{L.V.S. IMATI-CNR, via Ferrata 5, I-27100 Pavia, Italy.}
\email{spinolo@imati.cnr.it}
\maketitle
{
\rightskip .85 cm
\leftskip .85 cm
\parindent 0 pt
\begin{footnotesize}
We focus on entropy admissible solutions of scalar conservation laws in one space dimension and establish new regularity results with respect to time. First, we assume that the flux function $f$ is strictly convex and show that, for every $ x \in \R$, the total variation of the composite function $f \circ u(\cdot,  x)$ is controlled by the total variation of the initial datum. Next, we assume that $f$ is monotone and, under no convexity assumption, we show that, for every $x$, the total variation of the left and right trace $u(\cdot, x^\pm)$ is controlled by the total variation of the initial datum. We also exhibit a counter-example showing that in the first result the total variation bound does not extend to the function $u$, or equivalently that in the second result we cannot drop the monotonicity assumption. We then discuss applications to a source-destination model for traffic flows on road networks. We introduce a new approach, based on the analysis of transport equations with irregular coefficients, and, under the assumption that the network only contains so-called T-junctions, we establish existence and uniqueness results for merely bounded data in the class of solutions where the traffic is not congested. Our assumptions on the network and the traffic congestion are basically necessary to obtain well-posedness in view of a counter-example due to Bressan and Yu. We also 
establish stability and propagation of $BV$ regularity, and this is again interesting in view of recent counter-examples. 

\medskip\noindent
{\sc Keywords:} scalar conservation laws, regularity results, LWR model on networks, traffic models, source-destination model, multi-path approach 

\medskip\noindent
{\sc MSC (2010):  35L65}

\end{footnotesize}
}

\section{Introduction and main results}
We organize the introduction in two main parts: in the first one we discuss the regularity results, in the second one the applications to a traffic model. We conclude the introduction by providing the paper outline and recalling the main notation used in the paper. 
\subsection{Time regularity results for scalar conservation laws} 
We consider a scalar conservation law in one space dimension 
\be \label{e:cl}
    \partial_t u + \partial_x [f(u)]=0,
\eq
where $f \in C^2 (\R)$, 
and for the time being we focus on the Cauchy problem obtained by coupling~\eqref{e:cl} with the initial datum
\be \label{e:cpdatum}
     u(0, \cdot) = u_0.
\eq
The milestone paper by {Kru{\v{z}}kov~\cite{Kruzkov} establishes existence and uniqueness results for so-called \emph{entropy admissible solutions} of the Cauchy problem~\eqref{e:cl},\eqref{e:cpdatum}. It also establishes propagation of bounded total variation ($BV$) regularity: if $u_0 \in BV(\R)$, then the entropy admissible solution $u$ satisfies $u~\in~L^\infty (\R_+; BV(\R))$ and by using the equation this yields $u \in BV (]0, T[ \times \R)$ for every $T>0$. Since 
the pioneering work of Ole{\u\i}nik~\cite{Oleinik}, the investigation of the regularity properties of entropy admissible solutions has received considerable attention: here we only refer for an overview to the book by Dafermos~\cite{Dafermos:book}, to the recent contributions~\cite{AmbrosioDL,BianchiniMarconi,BGJ,CrippaOttoW,Jabin,Marconi} and to the references therein.

Our first regularity result establishes a uniform control on the total variation in time of the flux function $w:= f \circ u$ evaluated at any fixed  $x \in \R$. Despite the fact that the set $\{ (t, y): \; y=x \} \subseteq \R^2$ is negligible, the function $w(\cdot, x)$ is well defined owing to~\cite[Lemma 1.3.3]{Dafermos:book}, see also Lemma~\ref{l:dafermos} and Remark~\ref{r:pointwise} in the following. 
\begin{theorem}
\label{l:tvflux}
         Fix $f \in C^2 (\R)$  with  $f'' \leq 0$ or $f'' \ge 0$ and assume $u_0 \in BV (\R)$. Let  $u$ 
         be the entropy admissible solution of the Cauchy problem~\eqref{e:cl},~\eqref{e:cpdatum} and set $w : = f \circ u$, then
         \be 
         \label{e:tvflux2}
               \mathrm{TotVar} \, w (\cdot, x) \leq C \big( \mathrm{TotVar} \, u_0, \| f' \|_{L^\infty} \big) \quad \text{for every $x \in \R$}.
         \eq
        In the previous expression, we have set $w_0: = f \circ u_0$ and $ \| f' \|_{L^\infty} : = 
         \max_{ u \in [\operatorname{ess~inf} u_0, \operatorname{ess~sup} u_0] }| f'(u)|$.
\end{theorem}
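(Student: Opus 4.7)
The plan is to approximate by piecewise-constant initial data, use Dafermos wave-front tracking, express the time variation of the flux at $x=x_0$ as a sum over front crossings, and exploit the convexity of $f$ at the level of interactions. First I would approximate $u_0$ in $L^1_{\mathrm{loc}}$ by piecewise-constant $u_0^n$ with $\mathrm{TotVar}\,u_0^n\le\mathrm{TotVar}\,u_0$, construct the entropy admissible solutions $u^n$ via Dafermos front tracking with mesh $\eps_n\to 0$, and rely on Kru\v{z}kov's $L^1$-contraction, Lemma~\ref{l:dafermos}, and the lower semicontinuity of total variation under pointwise convergence to reduce the statement to proving~\eqref{e:tvflux2} for each $u^n$ uniformly in $n$. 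For $u^n$ piecewise constant, the map $t\mapsto w^n(t,x_0)$ is itself piecewise constant, jumping precisely when a front $\phi_i$ crosses $\{x=x_0\}$; hence
$$
\mathrm{TotVar}_t\,w^n(\cdot,x_0)\;=\;\sum_i\bigl|f(u_L^i)-f(u_R^i)\bigr|\;=\;\sum_i|s_i|\,|u_L^i-u_R^i|,
$$
the second equality being Rankine--Hugoniot (exact for shocks, up to $O(\eps_n)$ for rarefaction fronts), and since $|s_i|\le\|f'\|_{L^\infty}$ the problem reduces to bounding $\sum_i|u_L^i-u_R^i|$ uniformly in $n$.

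This last uniform bound is the heart of the proof and the main obstacle. The counter-example announced in the abstract shows that $\sum_i|u_L^i-u_R^i|$ cannot be controlled by $\mathrm{TotVar}\,u_0$ alone, so the convexity of $f$ must be exploited in a way that survives the lack of monotonicity of $f$ across its critical point. My plan is a \emph{genealogical} accounting of the fronts in the Dafermos scheme: each front is born either at $t=0$ or at an interaction (a shock-shock collision or a shock-absorbs-rarefaction event), dies at a later interaction, and crosses $\{x=x_0\}$ at most once during its life since its speed is constant between interactions. Because in the scalar convex case each interaction has an essentially unique outgoing fan (a single shock, or a single rarefaction that must then be decomposed into $\eps_n$-fronts), the fronts crossing $x_0$ can be organized into finitely many branches rooted either at initial discontinuities of $u_0$ or at rarefaction-creating interactions; on each branch, a convexity-based telescoping estimate combined with Rankine--Hugoniot bounds the total contribution by the root's flux strength. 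Summing over the branches is expected to give the desired bound $C(\mathrm{TotVar}\,u_0,\|f'\|_{L^\infty})$, with a controlled overhead to handle rarefactions that straddle the critical point of $f$, where the naive spatial functional $\mathrm{TotVar}_x(f\circ u^n)(t,\cdot)$ fails to be monotone in $t$. Once this is uniform in $n$, the passage to the limit via the approximation set up above closes the argument.
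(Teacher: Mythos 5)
There is a genuine gap, and it sits exactly at the step you yourself identify as ``the heart of the proof'': after writing $\mathrm{TotVar}_t\,w^n(\cdot,x_0)=\sum_i|f(u_L^i)-f(u_R^i)|=\sum_i|s_i|\,|u_L^i-u_R^i|$ you bound $|s_i|\le\|f'\|_{L^\infty}$ and declare that ``the problem reduces to bounding $\sum_i|u_L^i-u_R^i|$ uniformly in $n$.'' That reduction is to a \emph{false} statement, not merely a hard one: the quantity $\sum_i|u_L^i-u_R^i|$ over fronts crossing $\{x=x_0\}$ is essentially $\mathrm{TotVar}\,u(\cdot,x_0)$, and the counterexample in \S\ref{ss:tvblowup} is built precisely for the convex flux $f(u)=u^2$ with $BV$ data and exhibits $\mathrm{TotVar}\,u(\cdot,0)=+\infty$. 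In that example a single shock with $u_-\sim 1$ on the left and $u_+=-1$ on the right crosses $x=0$ infinitely many times; each crossing contributes $|u_L-u_R|\approx 2$ to your sum, while the flux jump $|u_-^2-1|$ is tiny because, by Rankine--Hugoniot, a shock that straddles the critical point of $f'$ crosses a fixed $x$ only when its speed (hence its flux jump) is small. The inequality $|f(u_L^i)-f(u_R^i)|\le\|f'\|_{L^\infty}|u_L^i-u_R^i|$ discards exactly this cancellation, which is the entire content of the theorem; no genealogical bookkeeping of fronts can recover a bound on $\sum_i|u_L^i-u_R^i|$ because no such bound exists. Convexity alone does not save you here, since the counterexample is already convex; what fails in the counterexample for $u$ but not for $w$ is the sign structure of the flux jump at crossing times.

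The paper's proof is built around precisely the structural fact your estimate throws away: Lemma~\ref{l:upward} shows that at any time a shock crosses the line $\{x=x_0\}$ the trace $w(\cdot,x_0)$ can only jump \emph{downward} (for $f''<0$), so it suffices to control the positive variation $\sum_\alpha[w(t_{\alpha+1},x_0)-w(t_\alpha,x_0)]^+$, which in turn is done in Lemma~\ref{l:key} by transporting values back along shock-free minimal/maximal backward characteristics to the initial datum and using the monotonicity of $t\mapsto\xi_t^\pm(0)$ to avoid double counting; the full variation then follows from the elementary identity relating positive and negative variations. If you want to keep a front-tracking framework, the object you must estimate is $\sum_i|f(u_L^i)-f(u_R^i)|$ directly (e.g.\ by isolating the one-sided sign of the flux jump at crossings and a functional measuring the positive part), not $\sum_i|u_L^i-u_R^i|$. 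The approximation and limiting layer of your plan (piecewise-constant data, $L^1$ stability, lower semicontinuity of the total variation, and Lemma~\ref{l:dafermos} to upgrade from a.e.\ $x$ to every $x$) is sound and matches the paper's Step 3, but it cannot compensate for the missing core estimate.
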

Note that, in~\eqref{e:tvflux2}, $C ( \mathrm{TotVar} \, u_0,  \| f' \|_{L^\infty} )$ denotes a constant only depending on $\mathrm{TotVar} \, u_0$ and on the Lipschitz constant $ \| f' \|_{L^\infty}$ and its explicit expression can be reconstructed by following the proof of Theorem~\ref{l:tvflux}. In general, we cannot control the left hand side of~\eqref{e:tvflux2} with $ \mathrm{TotVar} \, w_0$, see Remark~\ref{r:iride} for a counterexample and some further considerations. 
  Note furthermore that in  \S\ref{ss:tvblowup} we exhibit a counter-example showing that, under the same assumptions as in Theorem~\ref{l:tvflux}, the total variation of $u(\cdot, x)$, or more precisely of the left and right traces $u(\cdot, x^\pm)$, can blow up in finite time. However, the next result shows that one can establish a uniform control on the total variation of the entropy admissible solution provided the function $f$ is monotone. 
\begin{proposition}\label{p:borraccia2}
Fix $f \in C^2(\R)$ and $u_0\in BV(\R)$. Assume moreover that $f'\ge 0$ or $f'\le 0$ on the interval $[\operatorname{ess~inf} u_0, \operatorname{ess~sup} u_0]$. Then the entropy solution of the Cauchy problem~\eqref{e:cl},\eqref{e:cpdatum} satisfies 
\be 
\label{e:borraccia2}
      \mathrm{TotVar} \, u (\cdot,  x^\pm) 
      \leq \mathrm{TotVar} \, u_0 , 
      \quad \text{for every $x \in \R$}.
\eq
In the above expression, $u (\cdot, x^\pm)$ denote the right and left trace of $u$ at $y= x$.
\end{proposition}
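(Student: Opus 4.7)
The plan is to exploit the fact that when $f'\ge 0$ on the range of $u$, every wave carried by the solution propagates with non-negative speed, so in particular it crosses the vertical line $\{y=x\}$ at most once. The case $f'\le 0$ is symmetric via $y\mapsto -y$, so I concentrate on $f'\ge 0$. I would first approximate $u_0$ in $L^1_{\loc}$ by a sequence $u_0^n$ of piecewise-constant functions with finitely many jumps and $\mathrm{TotVar}(u_0^n)\le\mathrm{TotVar}(u_0)$. By $L^1$-stability of the Kruzhkov semigroup the corresponding entropy solutions $u^n$ converge to $u$ in $L^1_{\loc}$; combining this with Vasseur-type strong trace results and the lower semicontinuity of the total variation under $L^1$-convergence (applied to a subsequence of traces, themselves controlled by the forthcoming uniform estimate), it suffices to prove $\mathrm{TotVar}(u^n(\cdot,x^\pm))\le\mathrm{TotVar}(u_0^n)$ for every $n$.

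For piecewise-constant $u_0^n$ I would use a wave-front-tracking (WFT) construction: the approximate solution $u^n$ is piecewise constant on $\R_+\times\R$ with discontinuities supported on a locally finite union of straight segments, the wave fronts, each propagating with constant speed in $[0,+\infty)$. Each front therefore crosses $\{y=x\}$ at most once, and the trace $u^n(\cdot, x^\pm)$ is piecewise constant in $t$ with one jump at each such crossing time, of size equal to the strength of the corresponding front at that instant. To control the sum of these jumps, introduce the ``wave content to the left of $x$'',
\[
      \Psi(t):=\sum_{\alpha\text{ alive at }t,\ y_\alpha(t)\le x}|\sigma_\alpha(t)|.
\]
Since no front can enter $\{y\le x\}$ from the right (all speeds are non-negative), and since the standard scalar interaction analysis (same-sign mergers conserve the sum of strengths, opposite-sign cancellations strictly decrease it) shows that interactions inside $\{y\le x\}$ cannot increase $\Psi$, the only further source of variation of $\Psi$ is the strength exiting across $\{y=x\}$. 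Thus
\[
      \mathrm{TotVar}\bigl(u^n(\cdot,x^\pm)\bigr)=\sum_{\alpha\text{ crosses }\{y=x\}}|\sigma_\alpha(t_\alpha)|\le\Psi(0)\le\mathrm{TotVar}(u_0^n),
\]
which gives the proposition after sending $n\to\infty$.

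The main obstacle is the interaction analysis when $f$ is merely monotone and not convex: a Riemann problem may then be resolved by a composite wave consisting of interlaced shocks and rarefactions, and the bookkeeping of wave strengths at interaction points becomes delicate. A natural workaround is a further approximation, replacing $f$ by monotone piecewise-linear fluxes $f_m$ for which every Riemann problem is resolved by finitely many constant-speed fronts and the interaction rules reduce to same-sign merging and opposite-sign cancellation; the previous argument then applies verbatim, and one lets $m\to\infty$ using $L^1$-stability of entropy solutions with respect to the flux.
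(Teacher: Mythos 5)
Your proposal is correct and follows essentially the same route as the paper: wave front-tracking with a piecewise-linear monotone flux, a functional measuring the wave content to the left of $x$ (your $\Psi(t)$ is the paper's $G^\nu_{\bar x,1}(t)=\mathrm{TotVar}_{]-\infty,\bar x[}\,u^\nu(t,\cdot)$), and the observation that interactions only decrease this quantity while each rightward crossing of $\{y=x\}$ transfers exactly the corresponding strength into the temporal variation of the trace. The concluding limit passage via $L^1$-stability and lower semicontinuity of the total variation, carried out first for a.e.\ $x$ and then extended to every $x$ by continuity of $BV$ traces, is also the paper's.
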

Note that, since  $u \in BV (]0, T[ \times \R)$ for every $T>0$, then the traces  $u (\cdot, x^\pm)$ are well defined owing to the general theory of $BV$ functions, see~\cite{AmbrosioFuscoPallara}. Note furthermore that in the statement of Proposition~\ref{p:borraccia2} we do not impose any concavity or convexity assumption on $f$. Also, the counterexample in  \S\ref{ss:tvblowup} shows that the monotonicity assumption in the statement of Proposition~\ref{p:borraccia2} cannot be dropped, even in the case of a convex flux. There are several possible extensions of Theorem~\ref{l:tvflux} and Proposition~\ref{p:borraccia2} to initial-boundary value problem: Corollary~\ref{c:stazione} provides the one we need in the proof of Theorem~\ref{t:propbvreg}. 
\subsection{Applications to a multi-path model for traffic flows on road networks}
The use of conservation laws in the macroscopic modeling of vehicular and pedestrian traffic started with the works by Lighthill, Whitham and Richards \cite{LW,R} and has since then flourished: we refer to~\cite{BellomoDogbe,BressanCanicGaravelloHertyPiccoli,GaravelloHanPiccoli} for an extended overview. In particular, since the paper by Holden and Risebro~\cite{HoldenRisebro}, several works have been devoted to the study of conservation laws model on road networks. In this framework, one of the main challenges is describing the behavior of the drivers at road junctions, see for instance the analysis and the discussion in~\cite{BressanNguyen,CocliteGaravelloPiccoli,GaravelloHanPiccoli,
HoldenRisebro}. 

In the present work we focus on the multi-path approach to a source-destination model for traffic flows on road networks. We refer to~\cite{BressanYu,GaravelloHanPiccoli,GaravelloPiccoli:CMS} for an extended discussion on source-destination models, but in a nutshell the very basic feature of these models is that drivers are divided in several \emph{populations} depending on the path they follow on the road network. On each road, the total car density is governed by a scalar conservation law as in the classical  
Lighthill, Whitham and Richards (LWR) model, whereas the rate of cars 
following a given path satisfies a transport equation where the coefficient 
depends on the solution of the conservation law. In~\cite{BressanYu,GaravelloPiccoli:CMS} the model was approached by relying on wave front-tracking techniques and hence one of the main points in the analyis 
  was the solution of the so-called Riemann problems at roads junctions, in the same spirit as in~\cite{CocliteGaravelloPiccoli,GaravelloPiccoli:AIHP}. In particular, in~\cite{GaravelloPiccoli:CMS} Garavello and Piccoli establish existence of a suitable notion of solution provided the data are a small $BV$ perturbation of an equilibrium and under further technical assumptions. In~\cite{BressanYu} Bressan and Yu, among other things, exhibit some counterexamples that we comment upon later. 

In the present work we focus on the same multi-path approach to the source-destination model as in~\cite{BrianiCristiani,HW}. In this approach, one focuses on \emph{paths} (each of them followed by a population of drivers) 
rather than on roads. Junctions apparently disappear or, more correctly, are hidden in the fact that the equation governing the evolution of the total car density is discontinuous at each junction. In~\cite{BrianiCristiani} Briani and Cristiani regard the multi-path model as a system of conservation laws with discontinuous fluxes and discuss the theoretical properties of a related Godunov-type numerical scheme. In {\color{black}the present} work we approach the multi-path model by relying on the theory of transport equations with low regularity coefficients. This allows us to provide a simple and neat formulation of the problem and in particular of the boundary conditions in a very weak $L^\infty$ framework. By relying on results obtained in the companion paper~\cite{DovettaMarconiSpinolo}, we establish existence and uniqueness results for $L^\infty$ data under the assumptions that the network only contains T-junctions, that is junctions with only one incoming road, and that the traffic is not congested.  These assumptions are obviously 
restrictive, but basically necessary to obtain well-posedness in view of a counterexample due to Bressan and Yu~\cite{BressanYu}. More precisely,~\cite[Example 3]{BressanYu} involves a simple network consisting of two incoming and two outgoing roads where the source-destination model has two distinct solutions, one where the traffic is congested and one where it is not. This shows that uniquenesss can be violated if we do not require the condition that the traffic is not congested. On the other hand, networks only containing T-junctions are the only ones where one can reasonably hope for propagation of the condition that the traffic is not congested. {\color{black}See also~\cite{GaravelloMarcellini} for another recent work where the authors restrict to T-junctions.} In the present work we also establish propagation of $BV$ regularity and stability, and these results are again interesting in view of counterexamples in~\cite{BressanYu} that we discuss in the following. 

We now provide the detailed description of the multi-path approach. To simplify the exposition, we directly focus on the case of a network only containing T-junctions like the one in Figure~\ref{f:rete}, but this introductory part and Definition~\ref{d:adsol} extend to more general networks. Our network consists of a collection of $h$ \emph{roads} $I_i, \dots, I_h$, each of them parameterized by a bounded interval\footnote{Our analysis straightforwardly extends to the case where the roads can have infinite length.} and running from a junction point (or from the source) to another (or to a destination). We also work with the $m$ \emph{paths} $P_1, \dots, P_m$: each of them is a collection of consecutive roads starting from the source and ending in a destination. We fix a time 
interval $[0, T]$ and  for every $i=1, \dots, h$, we denote by $\rho_i$ the total car density on the road $I_i$ and  as in the classical LWR model we assume that $\rho_i$ is an entropy admissible solution of the conservation law 
\be
\label{e:foglio}
    \partial_t \rho_i + \partial_x [v(\rho_i) \rho_i] =0    
     \quad \text{
on $ ]0, T[ \times I_i$}.
\eq
In the previous expression, the velocity function $v$ satisfies    
\be \label{e:v}
     v \in C^2 (\R ), \quad v (\rho^\mathrm{max})=0, \quad v \ge 0 \; \text{on $[0, \rho^\mathrm{max}]$}. 
     \eq 
Here the constant $\rho^\mathrm{max}>0$ denotes the maximum possible car density, corresponding to bumper-to-bumper packing.  The flux function satisfies 
\be \label{e:f}
    g(z) : = v(z) z,  \quad g' >0 \; \text{on $]0, \rho^\ast[$}, \quad 
   g' \leq 0 \; \text{on $]\rho^\ast, \rho^\mathrm{max}[$},
\eq
where the density $\rho^\ast<  \rho^\mathrm{max}$ denotes the transition between free and congested traffic. We remark in passing that we are \emph{not} making the assumption that $g$ is concave.  We denote by $\theta_1, \dots, \theta_m$ the traffic-type functions, that is for every $k=1, \dots, m$ the function $\theta_k$ represents the fraction of cars following the path $P_k$. It is governed by the equation  
\be
\label{e:te}
    \partial_t [r_k \theta_k ]+ \partial_x [v(r_k) r_k \theta_k] =0    \quad \text{
on $ ]0, T[ \times P_k$},
\eq 
where 
\begin{equation} \label{e:patches}
r_k = \rho_i,   \quad \text{a.e. on $]0, T[ \times I_i$ for every $i$ such that $I_i \subseteq P_k$,}
\end{equation}
that is $r_k$ is obtained by patching together the $\rho_i$-s. Note that by combining~\eqref{e:foglio},~\eqref{e:te} and~\eqref{e:patches} we formally obtain
$$
   \partial_t \theta_k + v (r_k) \partial_x \theta_k =0,
$$
that is a transport equation. Note, however, that, in view of the general theory of conservation laws~\cite{Dafermos:book}, the best regularity one can hope for is $r_k \in BV (]0, T[ \times P_k)$ and in this framework the product $v (r_k) \partial_x \theta_k$ is highly ill defined since in general $\partial_x \theta_k$ is only a distribution. 
For every $i=1, \dots, h$ and $k=1, \dots, m$, we fix $\rho_{i0} \in L^\infty (I_i)$ and 
$\theta_{k0} \in L^\infty (P_k)$ and we augment~\eqref{e:foglio} and~\eqref{e:te} with the initial conditions 
\be \label{e:id1}
     \rho_i(0, \cdot) = \rho_{i0}, \quad  0 \leq \rho_{i0} \leq \rho^{\mathrm{max}} \quad \text{a.e. on $I_i$}
\eq
and 
\be \label{e:id2}
      \theta_k (0, \cdot) =
     \theta_{k0} \quad \text{a.e. on $P_k$}. 
\eq 	
Since we are focusing on a network only containing T-junctions, all the paths have the same origin $a$ and start with the same road $I_1$. We fix $\bar \rho \in L^\infty (]0, T[)$ and impose 
\be 
\label{e:bd1}
    \rho_1 (\cdot, a) =
    \bar \rho,  \quad  0 \leq \bar \rho \leq \rho^{\mathrm{max}}, \quad \text{a.e. on $]0, T[$.} 
\eq
The above datum is attained in the sense of Bardos, LeRoux and N\'ed\'elec~\cite{BLN79}, see the discussion in \S\ref{ss:BLN}. 
For every $k=1, \dots, m,$ we fix $\bar \theta_k \in L^\infty (\R)$ and we impose the boundary condition
\be 
\label{e:bd2}
      \theta_k (\cdot, a)    =
        \bar \theta_k.
\eq
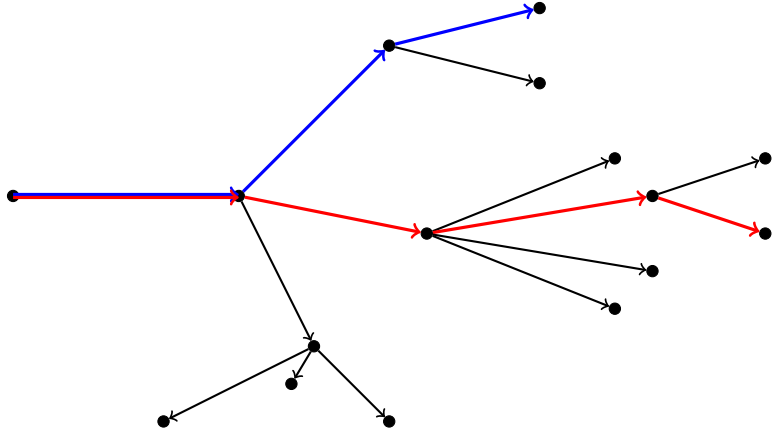
\begin{figure}
	\centering
	\begin{tikzpicture}
	\node at (-1,0) [nodo] (S) {};
	\node at (2,0) [nodo] (N1) {};
	\node at (4,2) [nodo] (N21) {};
	\node at (3,-2) [nodo] (N23) {};
	\node at (4.5,-.5) [nodo] (N22) {};
	\node at (6,2.5) [nodo] (D1) {};
	\node at (6,1.5) [nodo] (D2) {};
	\node at (7,.5) [nodo] (D3) {};
	\node at (7.5,0) [nodo] (D4) {};
	\node at (7.5,-1) [nodo] (D5) {};
	\node at (7,-1.5) [nodo] (D6) {};
	\node at (1,-3) [nodo] (D7) {};
	\node at (2.7,-2.5) [nodo] (D8) {};
	\node at (4,-3) [nodo] (D9) {};
	\node at (9,.5) [nodo] (D41) {};
	\node at (9,-.5) [nodo] (D42) {};
	
	\draw[->,very thick, blue] (N1)--(N21);
	\draw[->,very thick,red] (N1)--(N22);
	\draw[->,thick] (N1)--(N23);
	\draw[->,very thick, blue] (N21)--(D1);
	\draw[->,thick] (N21)--(D2);
	\draw[->,thick] (N22)--(D3);
	\draw[->,very thick, red] (N22)--(D4);
	\draw[->,thick] (N22)--(D5);
	\draw[->,thick] (N22)--(D6);
	\draw[->,thick] (N23)--(D7);
	\draw[->,thick] (N23)--(D8);
	\draw[->,thick] (N23)--(D9);
	\draw[->,thick] (D4)--(D41);
	
    \draw[->,very thick,red] (D4)--(D42);
	
	\draw[->, very  thick, blue] (-1,.02)--(2,.02);
	\draw[->,very thick, red] (-1,-.02)--(2,-.02);
	\end{tikzpicture}
	\caption{Example of a road network involving T-junctions only. The network has 15 roads, 1 source, 10 destinations and 10 paths {\color{black}(two of them highlighted in different colors).}}
	\label{f:rete}
\end{figure}
The above datum is attained in the sense of the \emph{distributional traces} as in~\cite{CrippaDonadelloSpinolo}, see Definition~\ref{d:te}. 
To conclude, we recall that $\theta_k$ represents the fraction of cars following the path $P_k$ and hence the physical range is 
\be \label{e:sommaunoid}
    0\leq  \theta_{k0} \leq 1  \; \text{a.e. on $P_k, \; \forall k=1, \dots, m$, } \qquad       \sum_{k: I_i \subseteq P_k} \theta_{k0}  =1\; \text{a.e. on $I_i, \; \forall i=1, \dots, h$} 
\eq 
and 
\be \label{e:sommaunobd}
    0\leq \bar  \theta_{k} \leq 1,  \; \text{a.e. on $]0, T[, \; \forall k=1, \dots, m$}, \qquad 
    \sum_{k =1}^m \bar \theta_{k}  =1\; \text{a.e. on $]0, T[$}. 
\eq
We now provide the definition of distributional solution of the multi-path model. 
\begin{definition}
\label{d:adsol}  For every $i=1, \dots, h$, $k=1, \dots, m$, fix the data $\rho_{i0} \in L^\infty (I_i), \theta_{k0} \in L^\infty (P_k)$, $\bar \rho, \bar \theta_k \in L^\infty (]0, T[)$ and assume that $\bar \theta_{k0}$ and $\bar \theta_k$ satisfy~\eqref{e:sommaunoid} and~\eqref{e:sommaunobd}, respectively. A distributional solution of the multi-path model is a family of functions $\rho_i \in L^\infty (]0, T[ \times I_i)$, $\theta_k \in L^\infty (]0, T[ \times P_k)$, $i=1, \dots, h$ and $k=1, \dots, m$, such that  
\begin{itemize}
\item[i)] for every $i=1, \dots, h$, $\rho_i$ is an entropy 
admissible solution of~\eqref{e:foglio},~\eqref{e:id1}. Also, $\rho_1$ is an entropy admissible solution of~\eqref{e:foglio},~\eqref{e:id1},~\eqref{e:bd1}, in the sense of~\cite{BLN79};
\item[ii)] Equation~\eqref{e:patches} holds true (that is $r_k$ is obtained by patching together the $\rho_i$-s); 
\item[iii)] $\theta_k$ is a distributional solution of the initial-boundary value problem~\eqref{e:te}, \eqref{e:id2},~\eqref{e:bd2}, in the sense of  Definition~\ref{d:te}. Also, it satisfies 
\be \label{e:zerouno}
      \text{for every  $k=1, \dots, m$}, \quad 0 \leq \rho_i \theta_k \leq \rho_i 
\eq
and
\be 
\label{e:sum1}
          \text{for every $i=1, \dots, h$}, \quad \rho_i \sum_{k: \,  I_i \subseteq P_k} \theta_{k}  =\rho_i 
\eq
a.e. on $]0, T[ \times I_i$. 
\end{itemize}
\end{definition}
Some remarks are here in order. First, in \S\ref{ss:BLN} we recall the definition of entropy admissible solution of~\eqref{e:foglio},~\eqref{e:id1} and of~\eqref{e:foglio},~\eqref{e:id1}~\eqref{e:bd1}.
Second, the heuristic meaning of~\eqref{e:zerouno} and~\eqref{e:sum1} is 
$$
  0 \leq \theta_k \leq 1 \quad \text{a.e. on $\R_+ \times P_k$}, \qquad   \sum_{k: \,  I_i \subseteq P_k}^m \theta_k  = 1 \quad \text{a.e. on $\R_+ \times I_i$},
$$
but owing to~\eqref{e:te} $\theta_k$ is not uniquely defined on the set where $r_k$ vanishes and this is why~\eqref{e:zerouno} and~\eqref{e:sum1} are the correct formulation. Third, distributional solutions of the multi-path model satisfy the flux conservation at junctions, see Lemma~\ref{l:iff}. We can now state our well-posedness result. {\color{black}We explicitly point out that it is an existence and uniqueness result, whereas several other results concerning traffic models on road networks only establish existence, see for instance~\cite{CocliteGaravelloPiccoli,GaravelloMarcellini,GaravelloPiccoli:CMS}}. 
\begin{theorem} \label{t:exunisd}
Fix $T>0$ and assume that $v$ and $g$ satisfy~\eqref{e:v} and~\eqref{e:f}, respectively. For every $i=1, \dots, h$, $k=1, \dots, m$, fix the initial data $\rho_{i0} \in L^\infty (I_i), \theta_{k0} \in L^\infty (P_k)$ and the boundary data $\bar \rho, \bar \theta_k \in L^\infty (]0, T[)$. Assume that $\bar \theta_{k0}$ and $\bar \theta_k$ satisfy~\eqref{e:sommaunoid} and~\eqref{e:sommaunobd}, respectively, and that $0 \leq \bar \rho \leq \rho^\ast$, $0 \leq \rho_{i0} \leq \rho^\ast$. Then there is a  distributional solution of the multi-path model such that $0 \leq \rho_i \leq \rho^\ast$, for every $i=1, \dots, h$. Also, the solution is unique in the following sense: if $\rho_1, \dots ,\rho_h, \theta_1, \dots, \theta_m$ and $\rho^\Diamond_1, \dots ,\rho^\Diamond_h, \theta^\Diamond_1, \dots, \theta^\Diamond_m$ are two solutions such that $0 \leq \rho_i, \rho_i^\Diamond \leq \rho^\ast$ for every $i=1, \dots, h$, then 
$$
    \rho_i = \rho_i^\Diamond, \; \text{a.e. on $]0, T[ \times I_i$, for every $i=1, \dots, h$}
$$
and 
\be \label{e:giugno}
    \rho_i \theta_k = \rho_i \theta_k^\Diamond, 
    \; \text{a.e. on $]0, T[ \times I_i$ for every $i: I_i \subseteq P_k$ and every $k=1, \dots, m$.} 
\eq
\end{theorem}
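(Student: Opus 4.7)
The plan is to exploit the T-junction structure to reduce the well-posedness problem to a sequence of one-dimensional problems. Because each junction has exactly one incoming road, the set of roads carries a natural partial order induced by the flow direction, and we may enumerate the roads $I_1,I_2,\dots,I_h$ so that the unique predecessor of $I_i$ (if any) has smaller index. The solution is then built inductively along this list: at each step, given the already-constructed functions upstream, we solve a scalar conservation law for $\rho_i$ on $I_i$ and an associated family of transport equations for the $\theta_k$ with $I_i\subseteq P_k$, and finally we propagate the outgoing fluxes as boundary data for the next roads. The assumption that the data are non-congested is essential: a maximum-principle comparison with the constant state $\rho^\ast$ ensures that $\rho_i$ stays in $[0,\rho^\ast]$, where $g$ is a $C^2$ diffeomorphism and the characteristic speed is strictly positive.

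The two building blocks of the induction are the following. On a single road $I_i$, given initial datum $\rho_{i0}\in[0,\rho^\ast]$ and upstream boundary datum $\bar\rho_i\in[0,\rho^\ast]$, the classical Kruzhkov--Bardos--LeRoux--N\'ed\'elec theory~\cite{BLN79} yields a unique entropy admissible $\rho_i$ with $0\le\rho_i\le\rho^\ast$. Once $\rho_i$ is known, the equation~\eqref{e:te} for each $\theta_k$ with $I_i\subseteq P_k$ is a transport equation with merely $L^\infty$ coefficient $v(\rho_i)\rho_i$, for which existence, uniqueness (modulo the ambiguity on $\{r_k=0\}$ encoded by~\eqref{e:zerouno} and~\eqref{e:sum1}) and a suitable distributional trace theory are provided by the companion paper~\cite{DovettaMarconiSpinolo}. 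At a downstream T-junction $b$ joining the incoming $I_i$ to the outgoing $I_{j_1},\dots,I_{j_n}$, we propagate the state to the heads of the outgoing roads by setting, for every $l=1,\dots,n$,
\[
   \bar\rho_{j_l} \;:=\; g^{-1}\!\Bigl(\sum_{k:\,I_{j_l}\subseteq P_k} g(\rho_i)(\cdot,b^-)\,\theta_k(\cdot,b^-)\Bigr),
\]
using that $g:[0,\rho^\ast]\to[0,g(\rho^\ast)]$ is a bijection, and letting the boundary datum for $\theta_k$ on $I_{j_l}$ be the corresponding distributional trace of $\theta_k$ on $I_i$. Kirchhoff conservation of flux at the junction, i.e.~Lemma~\ref{l:iff}, is built in by construction.

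To handle merely $L^\infty$ data we approximate by smooth $BV$ data, build approximate solutions by the procedure above, and pass to the limit using $L^1$-stability of entropy solutions together with the stability statements in~\cite{DovettaMarconiSpinolo}. The role of Theorem~\ref{l:tvflux}, Proposition~\ref{p:borraccia2} and their initial-boundary value counterpart Corollary~\ref{c:stazione} is crucial here: they provide uniform-in-$\varepsilon$ total variation bounds on the vertical traces $g(\rho_i^\varepsilon)(\cdot,b^\pm)$ and $\rho_i^\varepsilon(\cdot,b^\pm)$ of the approximate solutions in terms of the total variation of the approximate data, which gives the compactness needed to identify the limiting junction conditions and to ensure that the data propagated to the next road converge strongly in $L^1$. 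Uniqueness is then proved by the very same induction: on $I_1$, $\rho_1$ is unique by Kruzhkov's theorem and the products $\rho_1\theta_k$ are unique by~\cite{DovettaMarconiSpinolo}; at each subsequent junction, the boundary data for the outgoing roads are uniquely determined by the (already-unique) upstream traces, and the argument closes.

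The main obstacle is the interface between the conservation law and the transport equation at the junctions in the $L^\infty$ setting. The boundary data on the outgoing roads are defined via products of traces of $g(\rho_i)$ and of $\theta_k$ computed on a negligible vertical line, so one must genuinely use the strong trace theory provided by the regularity results of the first part of the paper together with the trace theory of the companion work. Once these traces are shown to exist, to be compatible with Lemma~\ref{l:iff}, and to be stable under $BV$ approximation of the $L^\infty$ data, the induction along the T-junction network closes without further surprises.
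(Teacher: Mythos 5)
Your overall skeleton (inductive construction road by road along the T-junction order, conservation law via~\cite{BLN79}/Proposition~\ref{p:datainc} for $\rho_i$, transport equation via the companion paper~\cite{DovettaMarconiSpinolo} for $\theta_k$, junction data obtained by inverting $g$ on $[0,\rho^\ast]$) is the same as the paper's, and your uniqueness induction is essentially the paper's as well. But the central claim of your third paragraph is a wrong turn. The paper's proof of Theorem~\ref{t:exunisd} does \emph{not} approximate by $BV$ data and does \emph{not} use Theorem~\ref{l:tvflux}, Proposition~\ref{p:borraccia2} or Corollary~\ref{c:stazione}: those regularity results enter only in the proof of Theorem~\ref{t:propbvreg}. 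The whole point of the formulation is that the construction closes directly in $L^\infty$, because the strong traces $g(\rho_i)(\cdot,d^\pm)$ exist for merely bounded entropy solutions by Theorem~\ref{l:verde} (Kwon--Vasseur/Panov), and the boundary datum for $\theta_k$ is phrased through the distributional trace $\mathrm{Tr}[v(\rho_j)\rho_j\theta_k](\cdot,d^-)$ of Lemma~\ref{l:normaltrace2} rather than through a ``trace of $\theta_k$'', which does not exist for an $L^\infty$ function. Your approximation detour is not only unnecessary, it is circular in spirit: to pass to the limit in the products $\rho_i^\ee\theta_k^\ee$ and in the junction traces you would need precisely the stability statement of Corollary~\ref{c:stability}, whose proof in the paper relies on a delicate renormalization argument (weak$^\ast$ convergence of $(\theta^n_k)^2$ plus the trace renormalization property) and on the uniqueness part of Theorem~\ref{t:exunisd} itself. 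The TV bounds you invoke control $\rho_i^\ee(\cdot,d^\pm)$ but give no compactness for $\theta_k^\ee$, so the limit passage in the junction condition is not justified as written.

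There is a second concrete gap at the junction step. Writing $\bar\rho_{j_l}=g^{-1}\bigl(\sum_{k}\mathrm{Tr}[v(\rho_i)\rho_i\theta_k](\cdot,d^-)\bigr)$ presupposes that the summed flux trace lies in $[0,g(\rho^\ast)]$; this is not automatic and is exactly what {\sc Step 1} of the paper's inductive step proves, by combining~\eqref{e:zerouno} and~\eqref{e:sum1} on the upstream road with the inclusion $\{k:I_{j+1}\subseteq P_k\}\subseteq\{k:I_j\subseteq P_k\}$ and a variant of~\cite[Lemma 6.1]{DovettaMarconiSpinolo}. In other words,~\eqref{e:sum1} is not merely ``built in by construction'': it must be verified on each road ({\sc Step 5} of the paper, via the implication ii)$\implies$i) of Lemma~\ref{l:iff}) precisely so that the next application of $g^{-1}$ makes sense. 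Finally, your formula for the boundary datum of $\theta_k$ must be supplemented by a convention on the set where the incoming flux trace vanishes (the paper's~\eqref{e:thetakb} sets $\theta_{kb}=0$ there); otherwise the ratio defining $\theta_{kb}$ is undefined and the application of Theorem~\ref{t:luglio} on the outgoing road does not go through.
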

Some remarks are again in order. First, the uniqueness result given by Theorem~\ref{t:exunisd} is the best one can hope for since, as pointed out before, Equation~\eqref{e:te} does not provide any information on $\theta_k$ on the set where $r_k$ vanishes. Second, as mentioned before, Lemma~\ref{l:iff} states that distributional solutions of the multi-path model satisfy flux conservation at junctions. However, it is well-known that the flux conservation does not suffice to select a unique solution, see the discussion in~\cite{GaravelloHanPiccoli}. The requirement that $0 \leq \rho_i \leq \rho^\ast$, i.e. that the traffic is not congested, can be therefore viewed as an admissibility criterion, which is reasonable in our framework: since the network only contains T-junctions, if the traffic is not congested at the initial time and at the source, one expects that it never gets congested. The next result establishes propagation of $BV$ regularity. 
\begin{theorem} \label{t:propbvreg}
Under the same assumptions as in the statement of Theorem~\ref{t:exunisd}, assume furthermore that, for every $i=1, \dots, h$ and $k=1, \dots, m$, $\rho_{i0} \in BV (I_i), \theta_{k0} \in BV (P_k)$, $\bar \rho, \bar \theta_k \in BV (]0, T[)$. Also, assume that, for some constant $\ee >0$, 
$$\ee \leq \rho_{i0}, \bar \rho \leq \rho^\ast -\ee, 
    \qquad  \ee \leq \theta_{k0}, \bar \theta_k \leq 1,
   \quad \text{for every $i=1, \dots, h$ and $k=1, \dots, m$.}
$$
Then the distributional solution $\rho_1, \dots, \rho_h, \theta_1, \dots, \theta_m$ of the source destination model satisfies $\rho_i \in BV (]0, T[ \times I_i)$ and $\theta_k \in BV (]0, T[ \times P_k)$, for every $i=1, \dots, h$ and $k=1, \dots, m$.
\end{theorem}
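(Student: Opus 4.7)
The strategy is a finite induction along the tree structure of the network, starting from the source road $I_1$ and propagating $BV$ regularity to every outgoing road through each T-junction. The two structural ingredients are: on each road, Corollary~\ref{c:stazione}, which is the initial-boundary-value counterpart of Proposition~\ref{p:borraccia2} and applies because the restriction of $g$ to $[0,\rho^\ast]$ is strictly monotone, to get $BV$ regularity and uniform trace bounds for the density $\rho_i$; and the well-posedness and $BV$-propagation theory for transport equations with $BV$ coefficient from the companion paper~\cite{DovettaMarconiSpinolo}, applied to each traffic-type function $\theta_k$.

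On the first road $I_1$, the density $\rho_1$ solves~\eqref{e:foglio},~\eqref{e:id1},~\eqref{e:bd1} with $BV$ data valued in $[\ee,\rho^\ast-\ee]$, which lies in the monotonicity region of $g$. Corollary~\ref{c:stazione} then yields $\rho_1\in BV(]0,T[\times I_1)$ together with a uniform $BV$ bound on the traces $\rho_1(\cdot,x^\pm)$ for every $x\in I_1$, while the comparison principle keeps $\rho_1$ inside $[\ee,\rho^\ast-\ee]$. Consequently $v(\rho_1)$ is $BV$ and bounded above and below by positive constants, so equation~\eqref{e:te} is, for each $k$ with $I_1\subseteq P_k$, a nondegenerate continuity equation with $BV$ coefficient; applying the results of~\cite{DovettaMarconiSpinolo} with the $BV$ and strictly positive data $\theta_{k0}$, $\bar\theta_k$ yields $\theta_k\in BV(]0,T[\times I_1)$ with $BV$ traces at the outgoing endpoint of $I_1$, and $\theta_k\ge\ee$ since the values propagate from the data along the characteristics of the $BV$ vector field.

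Let $b$ be the outgoing endpoint of $I_1$ and let $I_j$ be any outgoing road. The flux-conservation identity at the junction (Lemma~\ref{l:iff}) reads
\[
g\bigl(\rho_j(\cdot,b^+)\bigr)\;=\;g\bigl(\rho_1(\cdot,b^-)\bigr)\sum_{k:\,I_j\subseteq P_k}\theta_k(\cdot,b^-),
\]
and the right-hand side is $BV$ by the previous step and stays in a compact subinterval of $]0,g(\rho^\ast)[$. Inverting $g$, which is a bi-Lipschitz diffeomorphism of $[0,\rho^\ast-\ee']$ onto its image, produces a $BV$ boundary datum for $\rho_j$ valued in $[\ee'',\rho^\ast-\ee'']$ for some $\ee''>0$; the boundary data for the relevant $\theta_k$'s on $I_j$ are just the $BV$ traces $\theta_k(\cdot,b^-)$. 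The previous step then applies verbatim to $I_j$, and since the network is a finite tree with T-junctions only, the induction terminates after finitely many stages.

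\textbf{Main obstacle.} The most delicate point is the $BV$ analysis of the transport equation~\eqref{e:te}: the coefficient $v(r_k)$ is only $BV$, so we must rely on the trace and renormalization theory of~\cite{DovettaMarconiSpinolo} both to make sense of the boundary value of $\theta_k$ at the outgoing endpoint of each road and to control its total variation in terms of the total variation of the incoming data and of $r_k$. A secondary subtlety is ensuring, at each junction, that the flux identity above holds at the level of traces in a strong enough sense that inverting $g$ and using the result as a Bardos--LeRoux--N\'ed\'elec boundary datum is legitimate; this is precisely where the non-congestion condition $\rho\le\rho^\ast$ and the lower bound $\rho\ge\ee$ are essential, since they confine $g$ to a region on which it is a bi-Lipschitz monotone diffeomorphism.
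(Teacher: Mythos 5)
Your proposal is correct and follows essentially the same route as the paper's proof: a finite induction along the network, Proposition~\ref{p:datainc} and Corollary~\ref{c:stazione} to propagate $BV$ bounds and non-degeneracy for the densities, the $BV$ theory of~\cite{DovettaMarconiSpinolo} for the traffic-type functions, and flux conservation plus inversion of $g$ on the region where it is bi-Lipschitz at each junction. The only imprecision is that the boundary datum for $\theta_k$ on an outgoing road is not the raw trace $\theta_k(\cdot,b^-)$ but the renormalized fraction $\mathrm{Tr}[v(\rho_j)\rho_j\theta_k](\cdot,b^-)/g(\bar\rho_{j+1})$ as in~\eqref{e:thetakb}, which is still $BV$ and bounded below because the denominator is bounded away from zero (this uses the bound $\ee\leq\sum_k\tilde\theta_k\leq 1$ on the sum of the incoming traces, which the paper justifies via a linearity and uniqueness-of-traces argument).
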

Note that, under the assumptions of Theorem~\ref{t:propbvreg}, $r_k$ is bounded away from $0$ and hence the function $\theta_k$ is uniquely determined in view of the uniqueness result given in Theorem~\ref{t:exunisd}. Also, by carefully tracking the proof one could establish, if needed, an explicit bound on the total variation of $\rho_i$ and $\theta_k$, $i=1, \dots, h$, $k=1, \dots, m$, in terms of the total variation of the data and $\ee$. Finally, it is interesting to compare Theorem~\ref{t:propbvreg} with a counterexample in~\cite{BressanYu}.
More precisely,~\cite[Example 4]{BressanYu} show that, on a general network and for initial densities that attain the value $0$, one can have finite time blow up of the total variation even if the data have arbitrarily small total variation. 
To conclude, we establish the $L^1$-stability of the distributional solutions of the source-destination model with respect to perturbations in the data. 
\begin{corol} \label{c:stability}
Fix $T>0$ and assume that $v$ and $g$ satisfy~\eqref{e:v} and~\eqref{e:f}, respectively. For every $i=1, \dots, h$, $k=1, \dots, m$, fix some sequences of initial data $\{ \rho^n_{i0}\}_{n \in \mathbb N} \subseteq L^\infty (I_i), \{ \theta^n_{k0}\}_{n \in \mathbb N} \subseteq L^\infty (P_k)$ and of boundary data $\{ \bar \rho^n \}_{n \in \mathbb N}, \{ \bar \theta^n_k\}_{n \in \mathbb N} \subseteq L^\infty (]0, T[)$ in such a way that, for every $n$, $0 \leq \rho^n_{i0} \leq \rho^\ast$, $0 \leq \bar \rho^n \leq \rho^\ast$ and~\eqref{e:sommaunoid} and~\eqref{e:sommaunobd} are satisfied. Also, assume that 
\be   \label{e:novembre}
     \rho^n_{i0} \to \rho_{i0} \; \text{in $L^1(I_i)$}, \quad
     \theta_{k0}^n \to \theta_{k0} \; \text{in $L^1(P_k)$}, \quad
    \bar \rho^n \to \bar \rho \; \text{in $L^1 (]0, T[)$}, \quad  
     \bar \theta_k^n \to \bar \theta_k \; \text{in $L^1(]0, T[)$}
\eq
for every $i=1, \dots, h$, $k=1, \dots, m$, as $n \to + \infty$. 
Let $\{ \rho_i^n, \theta_k^n \}_{n \in \mathbb N}$ denotes a sequence of distributional solutions of the source-destination model with data $\rho^n_{i0}, \theta^n_{k0}, \bar \rho^n, \bar \theta^n_k$, $i=1, \dots, h$, $k=1, \dots, m$. Then
\be \label{e:agosto}
    \rho^n_i \to \rho_i \; \text{in $L^1 (]0, T[ \times I_i)$}, 
    \quad 
   r_k^n \theta^n_k \to r_k \theta_n \; \text{in $L^1 (]0, T[ \times P_k)$, 
   for every $i=1, \dots, h$ and  $k=1, \dots, m$}.
\eq
In the previous expression,  $\rho_1, \dots, \rho_h, \theta_1, \dots, \theta_m$ is the distributional solution of the source-destination model with data 
$\rho_{i0}, \theta_{k0}, \bar \rho, \bar \theta_k$, $i=1, \dots, h$, $k=1, \dots, m$ and $r^n_k$, $r_k$ are obtained by patching together the $\rho_i^n$-s and the $\rho_i$-s, respectively, see~\eqref{e:patches}. 
\end{corol}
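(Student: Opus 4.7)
The plan is to propagate $L^1$ stability road by road, exploiting the tree structure that the T-junctions-only assumption gives the network rooted at the source $a$. At each step one establishes, in order: convergence $\rho_i^n \to \rho_i$ in $L^1(]0,T[ \times I_i)$ by the $L^1$-contraction of Kruzhkov--BLN solutions; convergence of the flux trace $g(\rho_i^n(\cdot, b_i^-)) \to g(\rho_i(\cdot, b_i^-))$ in $L^1(]0,T[)$ at the downstream endpoint $b_i$; convergence in $L^1$ of the products $r_k^n \theta_k^n$ and of their relevant traces at each junction; and finally one combines these via flux conservation and the flux-splitting identity at the junction to obtain convergence of the boundary data of every immediately downstream road, feeding the induction. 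Because the network has finitely many roads, finitely many inductive steps conclude.

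For the root road $I_1$ the $L^1$-contraction and the assumption~\eqref{e:novembre} yield $\rho_1^n\to\rho_1$ immediately; for a downstream road $I_i$ this step will follow from the inductive convergence of its boundary datum combined with $L^1$-contraction again. The main obstacle is the trace convergence of the flux: for merely $L^\infty$ data, $L^1$ convergence of the solutions does not automatically entail $L^1$ convergence of the time traces. I would overcome this by a density argument. First approximate the data of both the limiting problem and each element of the approximating sequence by data in $BV$ that are uniformly bounded between $\ee$ and $\rho^\ast-\ee$ (and between $\ee$ and $1$ for the $\theta$-data); for such approximations Theorem~\ref{t:propbvreg} applies and, more importantly, Corollary~\ref{c:stazione} (the initial-boundary-value analogue of Theorem~\ref{l:tvflux}) yields a uniform $BV_t$ bound for the flux trace, hence $L^1_t$-compactness via Helly. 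A standard diagonal argument, combined with $L^1$-contraction used to make the approximation error uniform in $n$, then transfers the trace convergence from the $BV$ approximants to the original $L^\infty$ sequences.

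Once the flux trace converges, propagation through any T-junction with incoming road $I_i$ and outgoing roads $I_{j_1},\dots,I_{j_N}$ is elementary: Lemma~\ref{l:iff} gives flux conservation, whence the flux-splitting identity
\[
  g\bigl(\rho_{j_\ell}^n(\cdot, a_{j_\ell}^+)\bigr) = \Bigl(\sum_{k:\, I_{j_\ell}\subseteq P_k} \theta_k^n(\cdot, b_i^-)\Bigr)\, g\bigl(\rho_i^n(\cdot, b_i^-)\bigr),
\]
and since $g$ is strictly monotone on $[0,\rho^\ast]$ by~\eqref{e:f}, the boundary datum of $\rho_{j_\ell}^n$ depends $L^1$-continuously on the quantities already shown to converge, which closes the induction for the densities. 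The convergence of $r_k^n \theta_k^n$ in $L^1(]0,T[\times P_k)$, and of its trace at each junction, is then obtained by invoking the $L^1$-stability of the transport equation~\eqref{e:te} with coefficient $v(r_k^n)\to v(r_k)$ strongly in $L^1$ (itself a consequence, by $v\in C^2$ and the $L^\infty$ bound, of the density convergence along $P_k$), as developed in the companion paper~\cite{DovettaMarconiSpinolo} on which the existence and uniqueness in Theorem~\ref{t:exunisd} rest. This yields~\eqref{e:agosto} and concludes the proof.
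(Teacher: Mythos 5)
Your road-by-road induction is the same global strategy as the paper's proof, and the use of $L^1$-contraction for the densities on each road is correct. However, two of your key intermediate steps have genuine gaps. First, the convergence of the flux trace $g(\rho_i^n)(\cdot,d^-)\to g(\rho_i)(\cdot,d^-)$ in $L^1(]0,T[)$ cannot be obtained by the BV-approximation/Helly/diagonal scheme you describe: the $L^1$-contraction controls the solutions in the \emph{interior} of $]0,T[\times I_i$, whereas the trace lives on the null set $\{x=d\}$, so closeness of the BV approximants to the original solutions in $L^1(]0,T[\times I_i)$ says nothing about closeness of their traces. To transfer trace convergence from the approximants to the original sequence you need a quantitative stability estimate \emph{at the level of the trace}, and this is precisely the content of Lemma~\ref{l:malva}, which the paper proves by running Kru{\v z}kov's doubling of variables up to the boundary and which yields directly $\int_0^T|g(u_\beta)-g(u_{\beta n})|\,dt\le\int_\alpha^\beta|u_0-u_{0n}|\,dx+2\int_0^T|g(\bar u)-g(\bar u_n)|\,dt$. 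With that lemma in hand the BV approximation becomes unnecessary (which is fortunate, since the Corollary does not assume the data bounded away from $0$ and $\rho^\ast$, so Theorem~\ref{t:propbvreg} is not applicable to the given sequences).

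Second, for the transport part you invoke an ``$L^1$-stability of the transport equation with coefficient converging strongly in $L^1$'' from the companion paper, but no such off-the-shelf result is quoted here (Theorem~\ref{t:luglio} gives existence, uniqueness and comparison only), and for nearly incompressible coefficients that are merely $L^\infty$ this stability is exactly the delicate point. The paper proves it: one first extracts a weak$^\ast$ limit of $\theta_k^n$ and identifies it as a solution by passing to the limit in the distributional formulation (this uses the strong convergence $\rho_1^n\to\rho_1$); one then uses the renormalization property of~\cite{DovettaMarconiSpinolo} to see that $(\theta_k^n)^2$ solves the problem with squared data, hence $\rho_1(\theta_k^n)^2\weaks\rho_1(\theta_k)^2$; and finally weak convergence of $\rho_1\theta_k^n$ together with weak convergence of its square upgrades to strong $L^1$ convergence. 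The same renormalization device, applied to the \emph{distributional traces} via~\cite[Theorem 4.2]{DovettaMarconiSpinolo}, is what gives strong $L^1$ convergence of $\mathrm{Tr}[v(\rho_1^n)\rho_1^n\theta_k^n](\cdot,d^-)$; your junction identity requires exactly this strong trace convergence, which your proposal asserts but never establishes. Finally, your argument does not address the degenerate set where $g(\bar\rho_{j+1})=0$: the boundary datum for $\theta_k$ on the outgoing road is a quotient of traces, and the paper needs a separate dominated-convergence argument there (its {\sc Step 4B}) because the hypotheses allow the density to vanish.
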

Again, it is interesting to compare Corollary~\ref{c:stability} with a counterexample by Bressan and Yu. More precisely, Example 5 in~\cite{BressanYu} is concerned by a simple network consisting of a single T-junction with an incoming road and two outgoing roads and exhibits instability with respect to the weak$^\ast$ convergence. Note however that a key point in the construction 
of~\cite[Example 5]{BressanYu} is that the flux function in the incoming and outgoing roads is not the same, whereas here we are assuming that  it is the same on every road.

\subsection*{Outline}
The exposition is organized as follows. In \S\ref{s:overview} we overview some previous results that we need in the following. In \S\ref{s:pmain2} we provide the proof of Theorem~\ref{l:tvflux}. In \S\ref{s:other} we establish the proof of Proposition~\ref{p:borraccia2} and of Corollary~\ref{c:stazione} and we discuss the example of total variation blow-up. In \S\ref{s:normal} we complete the distributional formulation of the source-destination model and we establish the proof of Theorem~\ref{t:exunisd}. In \S\ref{ss:propbv} we give the proof of Theorem~\ref{t:propbvreg} and Corollary~\ref{c:stability}. 
\subsection*{Notation} For the reader's convenience, we collect here the main notation used in the present paper. 
We denote by $C(a_1, \dots, a_\ell)$ a constant only depending on the quantities $a_1, \dots, a_\ell$. Its precise value can vary from occurrence to occurrence. 
\subsubsection*{General mathematical symbols} 
\begin{itemize}
\item a.e., for a.e.: almost everywhere, for almost every. Unless otherwise specified, it means with respect to the standard Lebesgue measure; 
\item $BV$: the space of bounded variation functions;
\item $\mathrm{TotVar}\ u$: the total variation of the function $u$;
\item $u(\cdot, x^\pm)$: the left and right trace of the function $u \in BV (]0, T[ \times \R)$ at $y=x$, which are well defined owing to the general theory of $BV$ functions, see~\cite{AmbrosioFuscoPallara}; 
\item $u_\alpha, u_\beta$, also denoted by $u(\cdot, \alpha^+)$, $u(\cdot, \beta^-)$: the \emph{strong} traces given by Theorem~\ref{l:verde};
\item $\mathrm{Tr} [br \theta] (\cdot, \alpha^+)$,
$\mathrm{Tr} [br \theta] (\cdot, \beta^-)$: the \emph{distributional} traces given by Lemma~\ref{l:normaltrace2};
\item $u (\alpha^+), u(\beta^-):$ the right limit of the function $u \in BV (]\alpha, \beta[)$ at $x =\alpha$ and the left limit at $x = \beta$;
\end{itemize}
\subsubsection*{Symbols introduced in the present paper}
\begin{itemize}
\item $I_1, \dots, I_h$: the roads in the source-destination network;
\item $P_1, \dots, P_k$: the paths in the source-destination network;
\item $\rho_i$: the total car density on the road $I_i$;
\item $v$: the velocity function in~\eqref{e:foglio};
\item $\rho^\mathrm{max}$: the maximum possibile car density on $I_i$, see~\eqref{e:v};
\item $g (\rho):= \rho v(\rho)$, see~\eqref{e:f};
\item $\rho^\ast$: the threshold between free and congested traffic, see~\eqref{e:f};
\item $\theta_k$: the traffic-type function in~\eqref{e:te};
\item $r_k$: the function obtained by patching together the $\rho_i$-s,
 see~\eqref{e:patches};
\item $\rho_{i0}, \theta_{k0}$: the initial data in~\eqref{e:id1} and~\eqref{e:id2};
\item $\bar \rho_i$, $\bar \theta_k$: the boundary data in~\eqref{e:bd1} and~\eqref{e:bd2}.
\end{itemize}
\section{Overview of previous results} \label{s:overview}
In this section we collect some previous results that we need in the following.
\subsection{A regularity result for zero-divergence vector fields} \label{ss:daf}
We quote a very special case of Lemma 1.3.3 in \cite{Dafermos:book}.
\begin{lemma} \label{l:dafermos}
       Fix two (finite or infinite) intervals $]0, T[, ]a, b[ \subseteq \R$. Assume that $u, z \in L^\infty (]0, T[ \times ]a, b[ )$ satisfy
       $
          \partial_t u + \partial_x z =0
       $ in the sense of distributions on $]0, T[ \times ]a, b[$. 
       Then  $u$ has a representative such that the map $]0, T[ \to  L^\infty (]a, b[)$, $t \mapsto u(t, \cdot)$ is continuous with respect to 
       the weak$^\ast$ topology. Also,  $z$  has a representative such that the map 
       $]a, b[  \to  L^\infty (]0, T[)$, $x \mapsto z(\cdot, x)$ is continuous with respect to the weak$^\ast$ topology.
\end{lemma}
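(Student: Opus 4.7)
The plan is a standard density argument based on the distributional identity $\partial_t u + \partial_x z = 0$. First, for fixed $\varphi \in C^\infty_c(]a,b[)$ I would set $U_\varphi(t) := \int_a^b u(t,x)\varphi(x)\,dx$, which is a bounded measurable function of $t$ by Fubini. Testing the equation against a product $\varphi(x)\psi(t)$ with $\psi \in C^\infty_c(]0,T[)$ shows that $U_\varphi' = \int_a^b z(\cdot,x)\varphi'(x)\,dx$ in $\mathcal{D}'(]0,T[)$, whose right-hand side is $L^\infty$ in $t$ with norm at most $\|z\|_{L^\infty}\|\varphi'\|_{L^1}$. Hence $U_\varphi$ admits a unique Lipschitz representative $\tilde U_\varphi$ on $]0,T[$.

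Next I would promote this to a weakly-$\ast$ continuous map. Choose a countable family $\{\varphi_n\}_{n\in\mathbb{N}} \subseteq C^\infty_c(]a,b[)$ dense in $L^1(]a,b[)$. For each $t \in ]0,T[$, the prescription $\varphi_n \mapsto \tilde U_{\varphi_n}(t)$ is a linear functional satisfying $|\tilde U_{\varphi_n}(t)| \leq \|u\|_{L^\infty}\|\varphi_n\|_{L^1}$ (this bound holds a.e.\ in $t$ and extends to every $t$ by continuity of $\tilde U_{\varphi_n}$). It thus extends uniquely by continuity to a bounded linear functional $\bar u(t,\cdot) \in L^\infty(]a,b[)$ of norm at most $\|u\|_{L^\infty}$. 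For any $\varphi \in L^1(]a,b[)$, approximating by $\varphi_n$ in $L^1$ and combining the continuity of each $\tilde U_{\varphi_n}$ with the uniform estimate above yields continuity of $t \mapsto \langle \bar u(t,\cdot),\varphi\rangle$; this is precisely weak-$\ast$ continuity. A Fubini argument shows $\bar u = u$ a.e.\ on $]0,T[ \times ]a,b[$, so $\bar u$ is the desired representative of $u$.

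The second statement follows by exchanging the roles of $t$ and $x$: I would read the equation as $\partial_x z = -\partial_t u$ and repeat verbatim the scheme above with time and space swapped, producing a weak-$\ast$ continuous representative $x \mapsto \bar z(\cdot,x)$. The main (and rather minor) obstacle is the passage from Lipschitz continuity of $\tilde U_{\varphi_n}$ along countably many test functions to a simultaneously good representative acting on every $\varphi \in L^1$; this is handled by the uniform bound $\|\tilde U_{\varphi_n}(t)\| \leq \|u\|_{L^\infty}\|\varphi_n\|_{L^1}$ and unique continuous extension from a dense subspace. The remaining steps are routine consequences of Fubini and the density of $C^\infty_c$ in $L^1$.
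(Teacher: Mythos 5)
Your argument is correct and complete; the paper itself gives no proof of this lemma, quoting it directly as a special case of Lemma 1.3.3 in Dafermos's book, and your density-plus-Lipschitz-representative scheme is precisely the standard argument underlying that cited result. Nothing to add.
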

\begin{remark} \label{r:pointwise}
In the following, we always use the continuous representative of the maps $t \mapsto u(t, \cdot)$ and $x \mapsto z(\cdot, x)$. In this way, the values $u(t, \cdot)$ and $z(\cdot, x)$ are well defined \emph{for every $t$ and $x$}, respectively. 
\end{remark}
\subsection{Continuity of traces}
By combining~\cite[Theorem 3.88]{AmbrosioFuscoPallara} with the observation that translations are continuous
with respect to the strict convergence in $BV$ we get the following result. 
\begin{lemma}\label{L_trace}
Fix $T>0$, assume that $u \in L^\infty \cap BV (]0, T[ \times \R)$ and fix a sequence $\{ x_n \}_{n \in \mathbb N}\subseteq \R$, $x_n \leq \bar x$ such that $x_n \uparrow \bar x \in \R$ as $n \to +\infty$. Then $u(\cdot, x_n^\pm)$ converges in $L^1 (]0, T[)$ to $u(\cdot, \bar x^-)$. If $x_n \ge \bar x$ and $x_n \downarrow \bar x$, then $u(\cdot, x_n^\pm)$ converges in $L^1 (]0, T[)$ to $u(\cdot, \bar x^+)$. 
\end{lemma}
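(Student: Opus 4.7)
The plan is to reduce the statement to the continuity of the $BV$-trace operator under strict convergence, as furnished by~\cite[Theorem 3.88]{AmbrosioFuscoPallara}. The reduction is via a translation argument: the trace of $u$ on the moving vertical line $\{y=x_n\}$ is re-expressed as the trace on the fixed line $\{y=\bar x\}$ of a translated function $u_n$, and strict $BV$ convergence $u_n\to u$ follows from the continuity of translations in $L^1$ combined with the translation-invariance of the total variation.

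In concrete terms, consider the case $x_n\uparrow\bar x$. I would first select $\delta>0$ from the co-countable set of values such that the boundary of the rectangle $R:=\,]0,T[\,\times\,]\bar x-\delta,\bar x[$ is $|Du|$-negligible (possible since $|Du|$ is a finite Radon measure on any bounded subset of $]0,T[\,\times\,\R$, so only countably many horizontal levels can have positive $|Du|$-mass). For $n$ sufficiently large so that $\bar x-x_n\le\delta/2$, define
\[
    u_n(t,y):=u\bigl(t,y-(\bar x-x_n)\bigr),\qquad (t,y)\in R.
\]
A direct change of variables identifies the interior trace of $u_n$ on the upper side $]0,T[\,\times\,\{\bar x\}$ of $\partial R$ with $u(\cdot,x_n^-)$. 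Continuity of translations in $L^1$ (combined with $u\in L^\infty$) yields $u_n\to u$ in $L^1(R)$, while $|Du_n|(R)=|Du|\bigl(R-(0,\bar x-x_n)\bigr)\to |Du|(R)$ thanks to the $|Du|$-negligibility of $\partial R$. Hence $u_n\to u$ strictly in $BV(R)$, and~\cite[Theorem 3.88]{AmbrosioFuscoPallara} applied to the Lipschitz domain $R$ gives $L^1$-convergence of the interior traces on $]0,T[\,\times\,\{\bar x\}$, that is $u(\cdot,x_n^-)\to u(\cdot,\bar x^-)$ in $L^1(]0,T[)$.

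To recover $u(\cdot,x_n^+)\to u(\cdot,\bar x^-)$ I would exploit the one-dimensional $BV$-slicing theorem: for a.e. $t\in\,]0,T[$ the slice $y\mapsto u(t,y)$ lies in $BV(\R)$, and its right trace at $x_n$ and left trace at $\bar x$ satisfy
\[
    |u(t,x_n^+)-u(t,\bar x^-)|\le \mathrm{TotVar}\bigl(u(t,\cdot)|_{]x_n,\bar x[}\bigr).
\]
Integrating in $t$ and using the slicing identity for $|D_y u|$ gives
\[
    \|u(\cdot,x_n^+)-u(\cdot,\bar x^-)\|_{L^1(]0,T[)}\le |D_y u|\bigl(]0,T[\,\times\,]x_n,\bar x[\bigr)\longrightarrow 0
\]
by continuity of the finite measure $|D_y u|$ on the shrinking set. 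The case $x_n\downarrow\bar x$ is entirely symmetric, with $R$ replaced by $]0,T[\,\times\,]\bar x,\bar x+\delta[$ and the direction of translation reversed. I expect the principal technical nuisance to be the bookkeeping around the choice of $\delta$ guaranteeing $|Du|(\partial R)=0$, as this is exactly what promotes the weak-$*$ convergence of translates into the strict $BV$ convergence required to invoke the trace continuity theorem.
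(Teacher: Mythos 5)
Your argument is exactly the one the paper intends: the paper's entire ``proof'' of Lemma~\ref{L_trace} is the one-line remark that it follows by combining~\cite[Theorem 3.88]{AmbrosioFuscoPallara} with the continuity of translations with respect to strict convergence in $BV$, and your write-up is a correct fleshing-out of precisely that reduction (translate so the moving line becomes a fixed edge, verify strict convergence on a rectangle, apply the trace-continuity theorem, then absorb the $\pm$ discrepancy by slicing).

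One small inaccuracy is worth fixing. You cannot in general choose $\delta$ so that the \emph{whole} boundary of $R=\,]0,T[\,\times\,]\bar x-\delta,\bar x[$ is $|Du|$-negligible: the side $]0,T[\,\times\,\{\bar x\}$ is fixed and may carry $|Du|$-mass --- indeed this is exactly the situation in which $u(\cdot,\bar x^-)\neq u(\cdot,\bar x^+)$ and the lemma has content. Fortunately your argument only needs negligibility of the movable edge $\{y=\bar x-\delta\}$: since $x_n\le\bar x$, the translated rectangles $R-(0,\bar x-x_n)=\,]0,T[\,\times\,]x_n-\delta,x_n[$ never cross the line $\{y=\bar x\}$, their indicators converge pointwise to $\mathbbm{1}_{]0,T[\,\times\,[\bar x-\delta,\bar x[}$, and dominated convergence for the finite measure $|Du|$ on a bounded neighbourhood gives $|Du_n|(R)\to|Du|(R)$ as soon as $|Du|\bigl(]0,T[\,\times\,\{\bar x-\delta\}\bigr)=0$, which \emph{is} achievable for all but countably many $\delta$. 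With that correction the proof is complete.
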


\subsection{Entropy admissible solutions of initial-boundary value problems for scalar conservation laws} \label{ss:BLN} 
We now discuss the definition of entropy admissible solution of the initial-boundary value problem obtained augmenting the conservation law~\eqref{e:cl}
with the initial and boundary conditions 
\be \label{e:data}
    u(0, \cdot) = u_0, \qquad u(\cdot, \alpha) = \bar u, \qquad u (\cdot, \beta) = \underline u.
\eq 
We restrict to the one-dimensional case because it is the one we need in the following, however several results we quote extend to the multi-dimensional setting, see the book by Serre~\cite{Serre} for a general discussion on initial-boundary value problems for conservation laws. 
We first quote (a particular case of) a result due to Kwon and Vasseur~\cite[Theorem 1]{KwonVasseur} and  Panov~\cite[Theorem 1.1]{Panov:traces}. 
\begin{theorem}
\label{l:verde} Fix $T>0$, a bounded interval $]\alpha, \beta[ \subseteq \R$ and a flux function $f \in C^2 (\R)$. Assume that $u \in L^\infty (]0, T[ \times ]\alpha, \beta[)$ 
satisfies
\be \label{e:gennaio}
          \int_0^T \int_{\alpha}^{\beta}  \partial_t \phi |u- c | + \partial_x \phi \ \mathrm{sign}(u- c) [f(u) - f(c)]  dx dt \ge 0, \; \; \text{for every $\phi \in C^\infty_c (]\alpha, \beta[ \times ]0, T[)$, $\phi \ge 0$, 
  $c \in \R$},
\eq    
then there are $u_\alpha, u_\beta \in L^\infty (]0, T[)$ such that 
\be \label{e:febbraio}  
      \esslim_{y \to \alpha^+} \int_0^T |h(u)(\cdot, y) - h(u_\alpha) | dt =0 \quad \text{and} \quad 
       \esslim_{y \to \beta^-} \int_0^T |h(u)(\cdot, y) - h(u_\beta ) | dt =0
\eq
for $h(u): =f(u)$ and $h(u) : = \mathrm{sign}(u- c) [f(u) - f(c)]$, $c \in \R$. 
\end{theorem}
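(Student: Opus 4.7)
The plan is to prove this via a blow-up argument at the boundary $\{y=\alpha\}$ (the case $\{y=\beta\}$ is symmetric), combining the weak-trace result of Lemma~\ref{l:dafermos} with a Young-measure analysis in the spirit of Panov. The first step is to establish weak-$\ast$ traces of Kruzkov entropy fluxes. Taking $c$ outside the essential range of $u$ in~\eqref{e:gennaio} recovers $\partial_t u+\partial_x f(u)=0$, so Lemma~\ref{l:dafermos} directly yields a weak-$\ast$ trace $F_\alpha := f(u)(\cdot,\alpha^+) \in L^\infty(]0,T[)$. More generally, setting $Q_c(u)=\mathrm{sign}(u-c)(f(u)-f(c))$, one has $\partial_x Q_c(u) = -\partial_t|u-c|-\mu_c$ for a nonnegative local Radon measure $\mu_c$; a straightforward variant of Lemma~\ref{l:dafermos} (using that the right-hand side has locally finite measure in $x$) then produces a weak-$\ast$ trace $Q_c^\alpha \in L^\infty(]0,T[)$ of $Q_c(u)(\cdot,y)$ at $y=\alpha$ for every $c\in\R$.

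Next I would run a blow-up. Fix a sequence $y_n\downarrow\alpha$, set $\ee_n=y_n-\alpha$, and rescale $u_n(t,s)=u(t,\alpha+\ee_n s)$ on $]0,T[\times]0,R[$. Extract a subsequence along which $u_n$ generates a Young measure $\nu_{t,s}$. Testing against product functions $\phi(t)\psi(s)$ and using the weak-$\ast$ continuity from the previous step, one finds $\langle\nu_{t,s},Q_c\rangle = Q_c^\alpha(t)$ and $\langle\nu_{t,s},f\rangle = F_\alpha(t)$ for a.e.~$(t,s)$ and every $c\in\R$, i.e.~the Young measure is $s$-independent once tested against Kruzkov entropy fluxes.

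The heart of the argument is then showing that the family $\{Q_c^\alpha(t)\}_{c\in\R}$ arises, for a.e.~$t$, from a single scalar $u_\alpha(t)$: there must exist $u_\alpha\in L^\infty(]0,T[)$ with $Q_c^\alpha(t)=Q_c(u_\alpha(t))$ and $F_\alpha(t)=f(u_\alpha(t))$ for a.e.~$t$ and every $c\in\R$. For a pointwise value $v$, the map $c\mapsto Q_c(v)$ is absolutely continuous with derivative $-\mathrm{sign}(c-v)f'(c)$, and Panov's structural lemma shows that this piecewise form is preserved under the weak-$\ast$ limits produced above, thereby reconstructing the breakpoint $u_\alpha(t)$. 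Once $u_\alpha$ has been identified, applying the same identification to $|h|$ (for $h\in\{f,Q_c\}$) gives $\langle\nu_{t,s},|h|\rangle = |h(u_\alpha(t))|$, so weak-$\ast$ convergence of $h(u)(\cdot,y_n)$ together with convergence of absolute values forces $L^1$ convergence $h(u)(\cdot,y_n)\to h(u_\alpha)$; since the subsequence $\{y_n\}$ was arbitrary, this is precisely~\eqref{e:febbraio}.

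The main obstacle is the identification step above: reconstructing the scalar trace $u_\alpha(t)$ from the weak-$\ast$ moments $\{Q_c^\alpha(t)\}_{c\in\R}$ \emph{without} any genuine nonlinearity assumption on $f$. This is the essence of Panov's trace theorem and requires a careful structural analysis of the Kruzkov family under weak limits; the remaining steps are then standard distributional and Young-measure analysis.
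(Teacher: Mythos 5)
The paper does not actually prove Theorem~\ref{l:verde}: it is quoted, as ``a particular case'' of \cite[Theorem 1]{KwonVasseur} and \cite[Theorem 1.1]{Panov:traces}, so there is no internal proof to compare against, and your outline has to be measured against those references, whose blow-up/Young-measure strategy it does broadly follow. The decisive gap is the identification step, which you present as a structural fact about weak-$\ast$ limits of the Kruzkov family (``Panov's structural lemma shows that this piecewise form is preserved under the weak-$\ast$ limits''). No such purely structural statement can hold: the moments $\langle\nu_t,f\rangle$ and $\langle\nu_t,Q_c\rangle$ of a general probability measure need not be realized by any single point. For Burgers' flux $f(u)=u^2/2$ and $\nu_t=\tfrac12(\delta_{-1}+\delta_{1})$ one gets $\langle\nu_t,f\rangle=\tfrac12$ and $\langle\nu_t,Q_0\rangle=0$, while no $v$ satisfies both $f(v)=\tfrac12$ and $Q_0(v)=0$. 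Concretely, your $c$-derivative computation gives $\partial_c\langle\nu_t,Q_c\rangle=-f'(c)\int\mathrm{sign}(v-c)\,d\nu_t(v)$, and $\int\mathrm{sign}(v-c)\,d\nu_t(v)$ is a priori an arbitrary nonincreasing function interpolating between $1$ and $-1$, not $\mathrm{sign}(u_\alpha(t)-c)$. Ruling out such non-atomic boundary Young measures is exactly the content of the theorem, and it requires the PDE: in Kwon--Vasseur and Panov one shows that the blow-up limit solves a degenerate problem in the normal variable with vanishing entropy dissipation, and it is the rigidity of that limit problem that forces the flux moments to be realized by a single value $u_\alpha(t)$. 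As written, your proof assumes its own conclusion at this point.

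The final upgrade from weak-$\ast$ to $L^1$ convergence is also broken. The implication ``$g_n\weaks g$ and $|g_n|\weaks|g|$ imply $g_n\to g$ in $L^1$'' is false: a sequence alternating between the values $0$ and $2$ on intervals of length $1/n$ converges weakly-$\ast$ to $1$ together with its absolute value, without converging strongly. One needs convergence of a \emph{strictly} convex function of $g_n$ (e.g.\ $g_n^2\weaks g^2$), and in any case $|h|$ for $h\in\{f,Q_c\}$ does not belong to the Kruzkov family, so ``applying the same identification to $|h|$'' is not available. In the cited proofs the strong convergence is not obtained this way; it comes from genuine compactness of the blow-up sequence (averaging lemmas applied to the rescaled kinetic formulation in \cite{KwonVasseur}, localization/H-measure arguments in \cite{Panov:traces}), which yields $L^1_{\mathrm{loc}}$ convergence of the rescaled solutions themselves to a limit independent of the normal variable. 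By contrast, your preliminary steps are sound: the existence of weak-$\ast$ normal traces for the bounded divergence-measure fields $(|u-c|,Q_c(u))$ (cf.\ Lemma~\ref{l:normaltrace2} and the references in \S\ref{ss:acm}) and the computation $\langle\nu_{t,s},Q_c\rangle=Q_c^\alpha(t)$ for the blow-up Young measure are both correct.
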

In the following  we will refer to $f(u_\alpha)$ and $f(u_\beta)$ as the \emph{strong traces} of $f(u)$, see also Remark~\ref{r:giallo}. Also, we will sometimes denote them by $f(u)(\cdot, \alpha^+)$ and  $f(u)(\cdot, \beta^-)$, respectively. Note that in the previous result the regularity of $u$ is only $u \in L^\infty(]0, T[ \times ]\alpha, \beta[)$ and that in general one cannot replace $h$ with the identity in~\eqref{e:febbraio}. However, this is instead possible if $u \in BV (]\alpha, \beta[ \times ]0, T[)$ owing to the general theory of $BV$ functions, see~\cite{AmbrosioFuscoPallara}, or when $f''>0$ or $f''<0$, see~\cite{Vasseur}.  
\begin{definition}\label{d:bln}
Fix $f \in C^2 (\R)$, $\alpha, \beta \in \R $, $T>0$, $\bar u, \underline u \in L^\infty (]0, T[)$ and $u_0 \in L^\infty (]\alpha, \beta[)$. We say that $u \in L^\infty (]0, T[ \times ]\alpha, \beta[) $ is an entropy admissible solution of the initial-boundary value problem~\eqref{e:cl}, \eqref{e:data} if 
 \begin{equation}
\label{e:bln}
\begin{split}
        \int_0^T \int_{\alpha}^{\beta} & \partial_t \varphi |u- c | + \partial_x \varphi \ \mathrm{sign}(u- c) [f(u) - f(c)]  dx dt + 
        \int_{\alpha}^{\beta} \varphi (0, \cdot) |u_0 - c| dx \\ & 
  +   \int_0^T \varphi (\cdot, \alpha) \mathrm{sign}(\bar u -c) [f (u_\alpha) - f(c)] dt -
   \int_0^T \varphi (\cdot, \beta) \mathrm{sign}(\underline u -c) [f (u_\beta)  - f(c)] dt \ge 0 
\end{split}
\end{equation}
for every $c \in \R$ and $\varphi \in C^\infty_c (]- \infty, T[ \times \R)$ such that $\varphi \ge 0$.
\end{definition}
The above definition should be interpreted in the following sense: if~\eqref{e:bln} holds true, then in particular~\eqref{e:gennaio} is satisfied, and hence by Theorem~\ref{l:verde} the values $f(u_\alpha)$ and $f(u_\beta)$ are well defined and satisfy~\eqref{e:febbraio}. Note that an alternative approach to provide a definition of entropy admissible solution of~\eqref{e:cl}, \eqref{e:data} is discussed by Otto~\cite{Otto}. See also the discussion in~\cite{Rossi}. 

The analysis in~\cite{BLN79} combined with Theorem~\ref{l:verde} yields existence and uniqueness results for the entropy admissible solution of ~\eqref{e:cl}, \eqref{e:data}.
Also, the entropy admissible solution satisfies the maximum principle: if  
$\kappa \leq \bar u, \underline u, u_0 \leq K$ a.e., for some constants $\kappa, K \in \R$ then 
\be
   \label{e:maxprincl}
   \kappa \leq  u \leq K \quad \text{a.e. on $]0, T[ \times ]\alpha, \beta[$}. 
\eq
Also, if $\bar u, \underline u \in BV (]0, T[)$ and $u_0 \in BV (]\alpha, \beta[)$, then 
$$
      \mathrm{Tot Var} \ u(t, \cdot) \leq \mathrm{Tot Var} \ u_0 + \mathrm{Tot Var} \  \bar u 
      +  \mathrm{Tot Var} \  \underline u +
      |u_0 (\alpha^+) - \bar u(0^+) |  +
       |u_0 (\beta^-) - \underline u(0^+) |
$$
for every $t>0$. In the previous expression, $u_0 (\alpha^+)$ and 
       $u_0 (\beta^-)$ denote the right limit of the function $u_0$ at $x=\alpha$ and the left limit at $x = \beta$, respectively. They are well defined since $u_0 \in BV(]\alpha, \beta[)$. We now focus on the case  $f' \ge 0$, then to obtain a well-posed problem it suffices to assign the data at $t=0$ and $x = \alpha$, that is 
\be \label{e:data2}
     u(0, \cdot) = u_0, \qquad u(\cdot, \alpha)= \bar u. 
\eq
More precisely, we have the following. 
\begin{proposition} \label{p:datainc}
Fix $T>0$, a bounded interval $]\alpha, \beta[\subseteq \R$ and a flux function $f \in C^2 (\R)$. 
Assume furthermore  that $f'\ge 0$ on $[\min \{ \essinf \bar u, \essinf u_0 \}, \max  \{ \essup \bar u, \essup u_0 \}]$. Then there is a unique entropy admissible solution of the initial-boundary value problem~\eqref{e:cl}, \eqref{e:data2} such that $f'(u) \ge 0$ a.e. on $]0, T[ \times ]\alpha, \beta[$. In other words, there is a unique function $u \in L^\infty (]0, T[ \times ]\alpha, \beta)$ such that $f'(u)\ge0$ and 
\begin{equation}
\label{e:bln2}
\begin{split}
        \int_0^T \int_{\alpha}^{\beta}  \partial_t \varphi |u- c |  + &
       \partial_x \varphi \ \mathrm{sign}(u- c) [f(u) - f(c)]  dx dt + 
        \int_{\alpha}^{\beta} \varphi (0, \cdot) |u_0 - c| dx \\ & 
  +   \int_0^T \varphi (\cdot, \alpha) \mathrm{sign}(\bar u -c) [f (u_\alpha) - f(c)] dt  \ge 0 
\end{split}
\end{equation}
for every $c \in \R$ and $\varphi \in C^\infty_c (]- \infty, T[ \times ]- \infty, \beta[)$ such that $\varphi \ge 0$.
\end{proposition}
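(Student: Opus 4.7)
\medskip
\noindent\textbf{Proof plan.} My plan is to reduce the one-sided problem~\eqref{e:bln2} to the two-sided formulation of Definition~\ref{d:bln}, exploiting the fact that, when $f'\ge 0$ on the range of the solution, the BLN condition at the outflow endpoint $x=\beta$ is automatically satisfied irrespective of the prescribed outflow datum. For existence I would pick any $\underline u\in L^\infty(]0,T[)$ with essential range contained in $J:=[\min\{\essinf \bar u,\essinf u_0\},\max\{\essup \bar u,\essup u_0\}]$ (for instance $\underline u:=\bar u$) and apply the BLN existence result recalled after Definition~\ref{d:bln} to the data $(u_0,\bar u,\underline u)$, obtaining a function $u$ satisfying~\eqref{e:bln}. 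The maximum principle~\eqref{e:maxprincl} gives $u(t,x)\in J$ a.e.\ on $]0,T[\times]\alpha,\beta[$, so in particular $f'(u)\ge 0$ a.e. Since every test function admissible in~\eqref{e:bln2} is supported in $]-\infty,T[\times]-\infty,\beta[$, the $\beta$-boundary contribution in~\eqref{e:bln} disappears and $u$ satisfies~\eqref{e:bln2}.

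\medskip
\noindent For uniqueness, let $u$ and $v$ both satisfy~\eqref{e:bln2} with $f'(u),f'(v)\ge 0$ a.e. I would first extract from~\eqref{e:bln2} the Bardos--LeRoux--N\'ed\'elec condition at $x=\alpha$ by localizing the test function near $(t_0,\alpha)$: this yields, for a.e.\ $t_0$ and every $k$ between $u_\alpha(t_0)$ and $\bar u(t_0)$, the inequality $\operatorname{sign}(u_\alpha-\bar u)\bigl[f(u_\alpha)-f(k)\bigr]\le 0$. The hypothesis $f'\ge 0$ on the interval containing $\bar u$ and $u_\alpha$ then forces $u_\alpha=\bar u$ a.e.\ on $]0,T[$, and analogously $v_\alpha=\bar u$. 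Next I would perform Kru{\v{z}}kov's doubling of variables on the two inequalities~\eqref{e:bln2}, keeping test functions $\varphi\in C^\infty_c(]-\infty,T[\times]-\infty,\beta[)$ with $\varphi\ge 0$. Since the initial data coincide and since $u_\alpha=v_\alpha=\bar u$, both the initial term and the $\alpha$-boundary term of the doubled inequality vanish, leaving
$$
\int_0^T\!\!\int_\alpha^\beta\!\Bigl(\partial_t\varphi\,|u-v|+\partial_x\varphi\,\operatorname{sign}(u-v)\bigl[f(u)-f(v)\bigr]\Bigr)\,dx\,dt\ge 0.
$$
I would then specialize $\varphi(t,x)=\phi(t)\psi_\delta(x)$ with $\phi\in C^\infty_c(]-\infty,T[)$ nonnegative and approximating $\ind_{[0,t_0]}$, and $\psi_\delta$ a nonnegative smooth cutoff equal to $1$ on $[\alpha,\beta-2\delta]$ and vanishing on $[\beta-\delta,\beta[$. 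Since $\psi_\delta'\le 0$ and $\operatorname{sign}(u-v)[f(u)-f(v)]\ge 0$ on the common range on which $f$ is monotone increasing, passing to the limits $\delta\to 0$ (using Theorem~\ref{l:verde} to handle the strong trace at $\beta$) and $\phi\to\ind_{[0,t_0]}$ (using Lemma~\ref{l:dafermos} for the weak continuity of $t\mapsto u(t,\cdot)-v(t,\cdot)$) yields
$$
-\int_\alpha^\beta\!|u(t_0,x)-v(t_0,x)|\,dx\,-\,\int_0^{t_0}\!\operatorname{sign}(u_\beta-v_\beta)\bigl[f(u_\beta)-f(v_\beta)\bigr]\,dt\,\ge 0.
$$
Both summands on the left are nonpositive, so each must vanish; in particular $u(t_0,\cdot)=v(t_0,\cdot)$ a.e.\ for every $t_0\in{]0,T[}$.

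\medskip
\noindent The most delicate step, in my view, is the careful derivation from the one-sided inequality~\eqref{e:bln2} both of the BLN condition at $\alpha$ (together with the identification $u_\alpha=\bar u$) and of the doubled entropy inequality above. Both must be carried out using only test functions supported away from $\beta$, which is exactly the class allowed by~\eqref{e:bln2}; once this is in place, the monotonicity of $f$ automatically supplies the favorable sign at the outflow endpoint $\beta$ and closes the argument without invoking any further boundary condition there.
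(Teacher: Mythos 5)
The paper never actually proves Proposition~\ref{p:datainc}: it sits in the overview section \S\ref{ss:BLN} and is presented as a known consequence of the analysis of Bardos, LeRoux and N\'ed\'elec~\cite{BLN79} combined with the strong trace Theorem~\ref{l:verde}, with no argument supplied. Your proposal reconstructs what is essentially the expected argument, and its architecture is sound: for existence, solving the two-boundary problem with an arbitrary outflow datum valued in $J$, invoking the maximum principle~\eqref{e:maxprincl} to get $f'(u)\ge 0$, and discarding the $\beta$-term of~\eqref{e:bln} (killed by test functions vanishing near $\beta$) does yield~\eqref{e:bln2}; for uniqueness, Kru{\v{z}}kov doubling with a cutoff $\psi_\delta$ decreasing near $\beta$ exploits precisely the sign $\mathrm{sign}(u-v)[f(u)-f(v)]\ge 0$ available when both solutions take values in an interval on which $f$ is nondecreasing.

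Two points need repair. First, the intermediate claim $u_\alpha=\bar u$ is false under the mere hypothesis $f'\ge 0$: if $f$ has a flat part, the inequality $\mathrm{sign}(u_\alpha-\bar u)[f(u_\alpha)-f(k)]\le 0$ for $k$ between $u_\alpha$ and $\bar u$ only forces $f$ to be constant between them, i.e. $f(u_\alpha)=f(\bar u)$. The paper is explicit about this distinction: Proposition~\ref{p:datumclassic} assumes $f'>0$ \emph{strictly} to conclude $u_\alpha=\bar u$, whereas in the proof of Lemma~\ref{l:malva}, under $f'\ge 0$, only $f(u_\alpha)=f(\bar u)$ is asserted. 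Your argument survives because the doubled $\alpha$-boundary term only requires $\mathrm{sign}(u_\alpha-v_\alpha)[f(u_\alpha)-f(v_\alpha)]=0$, which follows from $f(u_\alpha)=f(v_\alpha)=f(\bar u)$, but the claim should be weakened accordingly. Second, Theorem~\ref{l:verde} provides strong traces of $\mathrm{sign}(u-c)[f(u)-f(c)]$ for \emph{constants} $c$, not of the Kru{\v{z}}kov flux $\mathrm{sign}(u-v)[f(u)-f(v)]$ of two solutions, so the trace at $\beta$ appearing in your final display is not directly justified; since that term carries a favourable sign ($\psi_\delta'\le 0$ against a nonnegative flux), the cleaner route is simply to drop it before passing to the limit, while at $\alpha$ the flux can be dominated by $|f(u)-f(\bar u)|+|f(\bar u)-f(v)|$, which tends to zero in $L^1$ by~\eqref{e:febbraio}. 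With these adjustments the proof is correct.
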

Note that, under the same assumptions as in Proposition~\ref{p:datainc}, if $\bar u \in BV(]0, T[)$ and $u_0 \in BV (]\alpha, \beta[)$, then 
\be 
\label{e:giallo}
      \mathrm{Tot Var} \ u(t, \cdot) \leq \mathrm{Tot Var} \ u_0 + \mathrm{Tot Var} \  \bar u 
      + 
      |u_0 (\alpha^+) - \bar u(0^+) | , \quad \text{for every $t \ge 0$.}
\eq
The following result is well known and we provide the proof for the sake of completeness. 
\begin{proposition}
\label{p:datumclassic}
Fix $T>0$, a bounded interval $]\alpha, \beta[\subseteq \R$ and a flux function $f \in C^2 (\R)$. 
Assume furthermore  that $f' > 0$ on $[\min \{ \essinf \bar u, \essinf u_0 \}, \max  \{ \essup \bar u, \essup u_0 \}]$
and that $u \in L^\infty (]0, T[ \times ]\alpha, \beta[)$ satisfies~\eqref{e:bln2}. Then $u_\alpha = \bar u$ a.e. on $]0, T[$.
\end{proposition}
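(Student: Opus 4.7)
The plan is to localize the entropy inequality~\eqref{e:bln2} near the boundary $x=\alpha$ and, by passing to a suitable limit, extract a pointwise Bardos--LeRoux--N\'ed\'elec type condition at $x=\alpha$. Once this pointwise condition is available, the strict monotonicity $f'>0$ forces $u_\alpha=\bar u$.

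More precisely, I would fix a non-negative $\psi\in C^\infty_c(]-\infty,T[)$ and a cut-off $\eta_\varepsilon\in C^\infty_c(]-\infty,\beta[)$ with $\eta_\varepsilon(\alpha)=1$, $0\le\eta_\varepsilon\le 1$ and $\eta_\varepsilon$ supported in $[\alpha,\alpha+\varepsilon]$, and plug $\varphi(t,x)=\psi(t)\eta_\varepsilon(x)$ into~\eqref{e:bln2}. The initial-datum term and the term involving $\partial_t\varphi$ are both $O(\varepsilon)$ and disappear in the limit $\varepsilon\to 0^+$. The critical term is
\[
 \int_0^T \psi(t)\int_\alpha^{\alpha+\varepsilon} \eta_\varepsilon'(x)\,\mathrm{sign}(u-c)\bigl[f(u)-f(c)\bigr]\,dx\,dt,
\]
which, after an integration by parts in $x$ (or, more directly, via Theorem~\ref{l:verde} applied to the entropy flux $h(u)=\mathrm{sign}(u-c)[f(u)-f(c)]$), converges to $-\int_0^T \psi(t)\,\mathrm{sign}(u_\alpha-c)[f(u_\alpha)-f(c)]\,dt$. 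The boundary term in~\eqref{e:bln2} is unchanged because $\varphi(\cdot,\alpha)=\psi$ for every $\varepsilon$. Since $\psi\ge 0$ is arbitrary, this yields for a.e.\ $t\in]0,T[$ and every $c\in\R$ the inequality
\begin{equation}\label{e:BLNpt}
 \bigl[\mathrm{sign}(\bar u(t)-c)-\mathrm{sign}(u_\alpha(t)-c)\bigr]\bigl[f(u_\alpha(t))-f(c)\bigr]\ge 0.
\end{equation}

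The final step exploits the strict monotonicity hypothesis $f'>0$. Assume by contradiction that on a set of positive measure one has $u_\alpha(t)\neq\bar u(t)$; say for instance $u_\alpha(t)<\bar u(t)$ on such a set (the other case being symmetric). Pick $c$ strictly between them: then $\mathrm{sign}(\bar u(t)-c)-\mathrm{sign}(u_\alpha(t)-c)=2$, so \eqref{e:BLNpt} forces $f(u_\alpha(t))\ge f(c)$; but $u_\alpha(t)<c$ combined with $f'>0$ gives $f(u_\alpha(t))<f(c)$, a contradiction. Hence $u_\alpha=\bar u$ a.e.\ on $]0,T[$.

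The main (mild) obstacle I anticipate is the rigorous passage to the limit in the $\partial_x\varphi$ term, which is where Theorem~\ref{l:verde} enters: the strong-trace convergence~\eqref{e:febbraio} for the family of entropy fluxes indexed by $c\in\R$ is exactly what is needed to turn the boundary-layer integral into the pointwise expression $\mathrm{sign}(u_\alpha-c)[f(u_\alpha)-f(c)]$ uniformly in $c$ (within the bounded range dictated by the maximum principle~\eqref{e:maxprincl}). Everything else is algebraic manipulation of the resulting inequality~\eqref{e:BLNpt}.
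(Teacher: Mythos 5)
Your proposal is correct and follows essentially the same route as the paper's own proof: localize~\eqref{e:bln2} near $x=\alpha$ with a test function $\psi(t)\omega_\ee(x)$, use the strong traces of Theorem~\ref{l:verde} to pass to the limit in the $\partial_x\varphi$ term, obtain the pointwise inequality $[\mathrm{sign}(\bar u-c)-\mathrm{sign}(u_\alpha-c)][f(u_\alpha)-f(c)]\ge 0$, and conclude by a case-by-case analysis using $f'>0$. The only slip is in the description of the cutoff: a smooth function supported in $[\alpha,\alpha+\ee]$ cannot equal $1$ at $\alpha$; as in the paper's~\eqref{e:cremisi2} you should take $\eta_\ee\equiv 1$ on a full neighborhood of $\alpha$ (the test functions in~\eqref{e:bln2} need not vanish at $x=\alpha$), which changes nothing in the argument.
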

\begin{proof}
We fix $\lambda \in C^\infty_c (]0, T[)$ and a family of functions $\omega_\ee \in C^\infty_c (\R)$ such that 
\be \label{e:cremisi2}
    \omega_\ee(x) =1 \; \text{if $\alpha -1<x < \alpha + \ee$}, \quad
    \omega_\ee (x) =0 \; \text{if $x > \alpha + 2 \ee$}, 
    \quad \omega'_\ee (x) \leq 0 \; \text{for every $x \in ]\alpha -1, + \infty[$}. 
\eq
We plug the test function $\varphi_\ee (t, x): = \lambda (t) \omega_\ee (x)$ into~\eqref{e:bln2} and let $\ee \to 0^+$. By relying on Lemma~\ref{l:verde} we obtain 
$$
     \int_0^T \lambda  [ \mathrm{sign}(\bar u -c) -\mathrm{sign}(u_\alpha -c)]
     [f (u_\alpha) - f(c)] dt \ge 0
$$
and by the arbirariness of $\lambda$ this implies that $ [ \mathrm{sign}(\bar u -c) -\mathrm{sign}(u_\alpha -c)]
     [f (u_\alpha) - f(c)] \ge 0$ a.e. on $]0, T[$. By relying on a case-by-case analysis we can then conclude that $u_\alpha = \bar u$ a.e. on $]0, T[$. 
\end{proof}
In the following we also need the next result and again we provide a sketch of the proof for the sake of completeness.
\begin{lemma}\label{l:malva}
Under the same assumptions as in the statement of Proposition~\ref{p:datainc}, assume that $\{ u_{0n} \} \subseteq L^\infty (]\alpha, \beta[)$ and $\{ \bar u_{n} \}\subseteq L^\infty (]0, T[)$ are two sequences of initial and boundary data such that 
$$
   f' (  u_{0n}  ) \ge 0 \quad \text{a.e. on $ ]\alpha, \beta[$}, 
   \quad 
   f' (\bar u_{n} ) \ge 0 \quad \text{ a.e. on $ ]0, T[$, for every $n \in \mathbb N$}  
$$
and 
\be \label{e:malva2}
     u_{0n} \to u_0 \quad \text{in $L^1 (]\alpha, \beta[)$}, 
     \quad 
    \bar u_{n} \to \bar u \quad \text{in $L^1 (]0, T[)$}.
\eq
Let $\{ f(u_{\beta n}) \}$ be the sequences of the traces of the fluxes as in Theorem~\ref{l:verde}, then $f (u_{\beta n})$ converges to $f(u_\beta)$ in $L^1(]0, T[)$. 
\end{lemma}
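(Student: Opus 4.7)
My approach is to combine Kruzhkov's doubling of variables with the monotonicity $f'\ge 0$, yielding a Kato-type $L^1$ inequality that bounds the trace of the flux at $\beta$ directly in terms of the initial and boundary data.

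\medskip

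First, I would apply the standard doubling of variables argument to the interior entropy inequality~\eqref{e:gennaio} satisfied by $u_n$ and $u$, obtaining, in the sense of distributions on $]0, T[ \times ]\alpha, \beta[$,
\[
\partial_t |u_n - u| + \partial_x \bigl[ \mathrm{sign}(u_n - u)\bigl( f(u_n) - f(u) \bigr) \bigr] \le 0.
\]
By Proposition~\ref{p:datainc}, both $u_n$ and $u$ take values where $f' \ge 0$, so $\mathrm{sign}(u_n - u)(f(u_n) - f(u)) = |f(u_n) - f(u)|$ a.e., and the Kato inequality simplifies to $\partial_t |u_n - u| + \partial_x |f(u_n) - f(u)| \le 0$.

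\medskip

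Next, I would integrate this inequality over the rectangle $[0, T] \times [\alpha, \beta]$. Theorem~\ref{l:verde} provides the strong $L^1$ traces $f(u_n)(\cdot, \alpha^+)$, $f(u)(\cdot, \alpha^+)$, $f(u_{\beta n})$, $f(u_\beta)$. The BLN condition~\eqref{e:bln2} identifies the traces at $\alpha$ with the data: arguing as in Proposition~\ref{p:datumclassic}, testing the localized BLN inequality $[\mathrm{sign}(\bar u - c) - \mathrm{sign}(u_\alpha - c)][f(u_\alpha) - f(c)] \ge 0$ with $c$ strictly between $\bar u$ and $u_\alpha$ forces $f$ to be constant on the interval with endpoints $\bar u$ and $u_\alpha$, hence $f(u_\alpha) = f(\bar u)$ (and analogously $f(u_{\alpha n}) = f(\bar u_n)$). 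Combining these identities with the weak-$\ast$ time continuity from Lemma~\ref{l:dafermos} and dropping the nonnegative remainder $\int_\alpha^\beta |u_n - u|(T, x)\, dx$ at the top of the rectangle, I would obtain
\[
\int_0^T |f(u_{\beta n}) - f(u_\beta)|\, dt \le \int_\alpha^\beta |u_{0n} - u_0|\, dx + \int_0^T |f(\bar u_n) - f(\bar u)|\, dt.
\]
The right-hand side vanishes as $n \to \infty$ by hypothesis~\eqref{e:malva2} and the Lipschitz continuity of $f$ on the range, yielding the desired $L^1$ convergence.

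\medskip

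The main obstacle will be the rigorous boundary integration: one must justify that the strong $L^1$-traces of $|f(u_n) - f(u)|$ at $\alpha^+$ and $\beta^-$ coincide with $|f(\bar u_n) - f(\bar u)|$ and $|f(u_{\beta n}) - f(u_\beta)|$ respectively. This relies essentially on the strong $L^1$-convergence provided by Theorem~\ref{l:verde}, and not on the weak-$\ast$ continuity from Lemma~\ref{l:dafermos} alone, since the absolute value does not commute with weak limits; the standard workaround is to approximate $\chi_{[\alpha, \beta]}$ by smooth cut-offs $\omega_\delta$ and pass to the limit $\delta \to 0^+$ using the $L^1$-strong trace property.
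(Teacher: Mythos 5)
Your proof is correct and follows essentially the same route as the paper: Kru\v{z}kov doubling of variables, the monotonicity identity $\mathrm{sign}(u_n-u)\,[f(u_n)-f(u)] = |f(u_n)-f(u)|$ valid when $f'\ge 0$, the strong traces of Theorem~\ref{l:verde}, and the identification $f(u_{\alpha n})=f(\bar u_n)$, $f(u_\alpha)=f(\bar u)$ at the inflow boundary (correctly adapted to $f'\ge 0$ rather than the strict inequality of Proposition~\ref{p:datumclassic}). The only difference is organizational: the paper first compares $u$ with constants $c$, obtaining a bound with a factor $2$ on the boundary term, and then doubles variables, whereas you double directly between $u_n$ and $u$ and integrate the resulting Kato inequality over the rectangle; both yield an estimate whose right-hand side vanishes by~\eqref{e:malva2}.
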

\begin{proof}[Sketch of the proof of Lemma~\ref{l:malva}]
Let $u$ be the entropy admissible solution of the initial-boundary value problem~\eqref{e:cl},\eqref{e:data2}. By relying on~\eqref{e:bln2} and on a suitable choice of the test functions (in the same spirit as the one in the proof of Proposition~\ref{p:datumclassic}) and by recalling Theorem~\ref{l:verde} we arrive at 
\begin{equation*}
\begin{split}
    \int_0^T \underbrace{ \mathrm{sign}(u_\beta-c)[f(u_\beta) - f(c) ]}_{= |f(u_\beta) - f(c)| \; \text{since $f'\ge 0$}} dt  & \leq 
    \int_\alpha^\beta |u_0 -c | dx
    +  \int_0^T  [ \mathrm{sign}(\bar u -c) -\mathrm{sign}(u_\alpha -c)]
     [f (u_\alpha) - f(c)]dt\\ &
     \leq 
    \int_\alpha^\beta |u_0 -c | dx
    + 2 \int_0^T |
     f (u_\alpha) - f(c)| dt. 
\end{split}
\end{equation*}
By using the celebrated doubling of variables method by Kru{\v{z}}kov~\cite{Kruzkov} and recalling that, by the same argument as in Proposition~\ref{p:datumclassic}, $f(u_\alpha) = f(\bar u)$, $f(u_{\alpha n}) = f(\bar u_n)$ we arrive at 
$$
    \int_0^T | f(u_\beta) - f(u_{\beta n})| dt  \leq 
    \int_\alpha^\beta |u_0 -u_{0 n} |  dx
    + 2 \int_0^T|f (u_\alpha) - f(\bar u_{n})| dt,
$$
and owing to~\eqref{e:malva2} this yields the convergence of $f(u_{\beta n})$ to $f(u_{\beta})$. 
\end{proof}
By relying on the proof of Lemma~\ref{l:malva} we also get the following result. 
\begin{lemma}
\label{l:lanacaprina}
Fix $T>0$, a bounded interval $]\alpha, \beta[\subseteq \R$ and a flux function $f \in C^2 (\R)$.  Assume that $u, v \in L^\infty (]0, T[ \times ]\alpha, \beta[)$ are two entropy admissible solutions in the sense of Definition~\ref{d:bln} and satisfy the inequality $f'  \ge 0$ a.e. on $[\min \{\essinf v, \essinf u \}, \max \{\essup v, \essup u \} ]$. Assume that $u_0 = v_0$ and $f(\bar u) = f(\bar v)$, then $u =v$ a.e. on $]0, T[ \times ]\alpha, \beta[$.
\end{lemma}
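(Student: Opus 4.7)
My approach is to adapt the argument sketched for Lemma~\ref{l:malva}, replacing the constant $c$ with the second solution via Kru{\v{z}}kov's doubling of variables, and then exploiting the hypotheses $u_0 = v_0$ and $f(\bar u) = f(\bar v)$ to drive the $L^1$ contraction estimate to zero.

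First, I would establish the preliminary identities $f(u_\alpha) = f(\bar u)$ and $f(v_\alpha) = f(\bar v)$. The argument is exactly the one in Proposition~\ref{p:datumclassic}, but carried out only under the weaker hypothesis $f'\ge 0$: plugging $\varphi_\ee(t,x) = \lambda(t)\omega_\ee(x)$ with $\omega_\ee$ as in~\eqref{e:cremisi2} into~\eqref{e:bln2}, letting $\ee\to 0^+$, and invoking Theorem~\ref{l:verde}, one obtains
\[
   [\mathrm{sign}(\bar u - c)-\mathrm{sign}(u_\alpha - c)]\,[f(u_\alpha)-f(c)]\ge 0\quad\text{a.e.\ on }]0,T[,\ \forall c\in\R.
\]
If $u_\alpha(t)\neq\bar u(t)$, choosing $c$ strictly between them forces the bracket $\mathrm{sign}(\bar u - c)-\mathrm{sign}(u_\alpha - c)$ to equal $\pm 2$, so that $f(u_\alpha)-f(c)$ and $c-u_\alpha$ have the same sign; since $f'\ge 0$ gives the opposite inequality, we conclude $f(u_\alpha)=f(c)$ for every intermediate $c$, and then $f(u_\alpha)=f(\bar u)$ by continuity of $f$. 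The same reasoning applied to $v$ yields $f(v_\alpha)=f(\bar v)$, so that the hypothesis $f(\bar u)=f(\bar v)$ gives $f(u_\alpha)=f(v_\alpha)$ a.e.\ on $]0,T[$.

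Next, I would apply Kru{\v{z}}kov's doubling of variables to the two BLN inequalities~\eqref{e:bln2} written for $u$ (with the constant replaced by $v(s,y)$) and for $v$ (with the constant replaced by $u(t,x)$), adding them and passing to the diagonal. Since $f'\ge 0$ on the relevant interval, $\mathrm{sign}(u-v)[f(u)-f(v)]=|f(u)-f(v)|$, and the doubled inequality reads
\[
   \int_0^T\!\!\int_\alpha^\beta \bigl[|u-v|\,\partial_t\varphi + |f(u)-f(v)|\,\partial_x\varphi\bigr]\,dxdt
   +\int_\alpha^\beta\varphi(0,\cdot)|u_0-v_0|\,dx
   +\int_0^T \varphi(\cdot,\alpha)\,B(t)\,dt\ge 0,
\]
for every nonnegative $\varphi\in C^\infty_c(\,]-\infty,T[\,\times\,]-\infty,\beta[\,)$, where the boundary integrand $B(t)$ collects the terms at $x=\alpha$ and, by the same case-by-case analysis as in Lemma~\ref{l:malva}, is pointwise bounded by $2\,|f(u_\alpha)-f(v_\alpha)|$.

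Finally, I would choose $\varphi(t,x)=\lambda(t)\omega_\ee(x)$ where $\lambda$ approximates $\mathbf{1}_{[0,t_0]}$ in $t$ (with $\lambda(0)=1$) and $\omega_\ee$ approximates $\mathbf{1}_{[\alpha,\beta]}$ with $\omega_\ee\equiv 1$ on $[\alpha,\beta-2\ee]$ and support in $]-\infty,\beta[$. Using Theorem~\ref{l:verde} to pass to the limit $\ee\to 0^+$ (which converts the $\partial_x\omega_\ee$ contribution near $\beta$ into the nonnegative trace term $\int_0^{t_0}|f(u_\beta)-f(v_\beta)|\,dt$) and then letting $\lambda\to\mathbf{1}_{[0,t_0]}$, I obtain
\[
   \int_\alpha^\beta |u(t_0,x)-v(t_0,x)|\,dx + \int_0^{t_0}|f(u_\beta)-f(v_\beta)|\,dt
   \le \int_\alpha^\beta |u_0-v_0|\,dx + 2\int_0^{t_0}|f(u_\alpha)-f(v_\alpha)|\,dt
\]
for a.e.\ $t_0\in\,]0,T[\,$. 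Both terms on the right-hand side vanish by hypothesis (combined with Step~1), so $u(t_0,\cdot)=v(t_0,\cdot)$ a.e., which gives the claim.

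The step I expect to be the main obstacle is the careful bookkeeping in the doubling procedure that identifies the boundary integrand $B(t)$ and shows that it is controlled by $|f(u_\alpha)-f(v_\alpha)|$: unlike the Lemma~\ref{l:malva} setting where $c$ is a constant and the four sign cases are immediate, when $c$ is promoted to a second solution one must invoke the strong-trace theorem for both solutions simultaneously and then redo the case analysis pointwise. Once this is in place, the rest is routine manipulation of test functions and the limiting arguments already used in the proofs of Proposition~\ref{p:datumclassic} and Lemma~\ref{l:malva}.
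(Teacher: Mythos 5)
Your proposal is correct and follows essentially the same route as the paper, which proves this lemma only by reference to the proof of Lemma~\ref{l:malva}: Kru\v{z}kov's doubling of variables combined with the identification $f(u_\alpha)=f(\bar u)$, $f(v_\alpha)=f(\bar v)$ via the strong traces of Theorem~\ref{l:verde}, so that the boundary contribution at $x=\alpha$ is controlled by $|f(u_\alpha)-f(v_\alpha)|=0$. Your version additionally retains the interior term $\int_\alpha^\beta|u(t_0,\cdot)-v(t_0,\cdot)|\,dx$ on the left-hand side, which is exactly what is needed to conclude $u=v$ rather than only the equality of the traces at $\beta$.
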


To conclude, we assign the initial condition
\be \label{e:lilla}
     u(0, \cdot) = u_0
\eq
and, since for technical reasons we need it in the following, we give the definition of entropy admissible solution of~\eqref{e:cl},\eqref{e:lilla}. 
\begin{definition}
\label{d:giallo}
       Fix $T>0$, an interval $]\alpha, \beta[ \subseteq \R$ and a flux function $f \in C^2 (\R)$. We say that $u\in L^\infty(]0, T[\times ]\alpha, \beta[)$ is an entropy admissible solution of~\eqref{e:cl},\eqref{e:lilla} if 
\begin{equation}
\label{e:aprile}
\begin{split}
        \int_0^T \int_{\alpha}^{\beta} & \partial_t \psi |u- c | + \partial_x \psi \ \mathrm{sign}(u- c) [f(u) - f(c)]  dx dt + 
        \int_{\alpha}^{\beta} \psi (0, \cdot) |u_0 - c| dx \ge 0 
\end{split}
\end{equation}
for every $\psi \in C^\infty_c (]-\infty, T[ \times ]\alpha, \beta[$) such that $\psi \ge 0$ and $c \in \R$. 
\end{definition}
Needless to say, unless $]\alpha, \beta[ = \R$, in general we do not expect that the entropy admissible solution of~\eqref{e:cl},\eqref{e:lilla} is unique because we are not prescribing any boundary condition. 
\subsection{Distributional traces for solutions of continuity equations}
\label{ss:acm}
In the following we quote a result that will enable us to give a meaning to the boundary condition in~\eqref{e:bd2}. We refer to~\cite{AmbrosioCrippaManiglia,Anzellotti,ChenZiemerTorres} for a general discussion about normal traces for measure divergence vector fields. We now state a straighforward corollary of~\cite[Lemma 3.3]{CrippaDonadelloSpinolo}.
\begin{lemma}
\label{l:normaltrace2}
Fix $\alpha, \beta \in \R, T>0$. Assume that $r, b, \theta \in L^\infty (]0, T[ \times ]\alpha, \beta[)$ satisfy 
\be \label{e:dsoltheta}
        \int_0^T \! \! \int_\alpha^\beta r \theta  (\partial_t \phi +  b  \partial_x \phi ) 
      dx dt     = 0
    \quad \text{for every $\phi \in C^\infty_c (]0, T[ \times ]\alpha, \beta[)$.}
\eq
Then there are unique functions $\mathrm{Tr} [br \theta] (\cdot, \alpha^+), \mathrm{Tr} [b r  \theta] (\cdot, \beta^-)~\in~L^\infty (\R_+)$ and $[r \theta ]_0~\in~L^\infty (]\alpha, \beta[)$ such that 
\be
\label{e:suppout42}
    \begin{split}
    \int_0^T \! \! \int_{\alpha}^\beta r \theta (\partial_t \varphi +  b \partial_x \varphi) dx dt & = 
    \int_0^T \! \!  \varphi \mathrm{Tr} [b r \theta] (\cdot, \alpha^+)  dt + \int_0^T \! \!  \varphi \mathrm{Tr} [b r\theta] (\cdot, \beta^-)  dt \\
    & \quad -
    \int_\alpha^\beta [r \theta]_0 \varphi(0, \cdot) dx , 
    \quad \text{for every $\varphi \in C^\infty_c (]- \infty, T[ \times \R)$.}
\end{split}
\eq
\end{lemma}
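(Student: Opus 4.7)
The identity~\eqref{e:dsoltheta} says that the bounded vector field $V := (r\theta,\, b r\theta) \in L^\infty(Q; \R^2)$ has vanishing distributional divergence on the open rectangle $Q := \,]0,T[\,\times\,]\alpha,\beta[$, so $V$ is a divergence-measure field. This places the lemma squarely within the classical framework of normal traces of such fields (see~\cite{AmbrosioCrippaManiglia,Anzellotti,ChenZiemerTorres}) applied to the Lipschitz domain $Q$: the sought identity~\eqref{e:suppout42} is, up to signs, the Gauss--Green formula on $Q$, with the contribution of the top side $\{t=T\}$ suppressed by the support condition $\varphi \in C^\infty_c(\,]-\infty,T[\,\times\R)$. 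The plan is to obtain the statement as a direct consequence of \cite[Lemma~3.3]{CrippaDonadelloSpinolo}, which yields the $L^\infty$ normal trace of a bounded divergence-free vector field on a flat piece of Lipschitz boundary.

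For the existence part I would apply that lemma once to each of the three relevant flat sides of $\partial Q$: the bottom side $\,]\alpha,\beta[\,\times\{0\}$, producing the function $[r\theta]_0 \in L^\infty(\,]\alpha,\beta[\,)$; the left side $\{\alpha\}\times\,]0,T[$, producing $\mathrm{Tr}[br\theta](\cdot,\alpha^+) \in L^\infty(\,]0,T[\,)$; and the right side $\{\beta\}\times\,]0,T[$, producing $\mathrm{Tr}[br\theta](\cdot,\beta^-)\in L^\infty(\,]0,T[\,)$. The sign conventions are fixed by matching to the Gauss--Green formula with outward normals $(-1,0)$, $(0,-1)$, $(0,1)$, so that each of the three contributions enters~\eqref{e:suppout42} with the prescribed coefficient. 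Summing the three identities and using that $\varphi$ vanishes in a neighborhood of the top side $\{t=T\}$, which discards the fourth piece of $\partial Q$, then delivers~\eqref{e:suppout42}.

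Uniqueness follows by a standard decoupling argument. If two admissible triples satisfy~\eqref{e:suppout42}, their differences $\delta_\alpha,\delta_\beta \in L^\infty(\,]0,T[\,)$ and $\delta_0\in L^\infty(\,]\alpha,\beta[\,)$ verify
\[
\int_0^T \varphi(\cdot,\alpha)\,\delta_\alpha\,dt + \int_0^T \varphi(\cdot,\beta)\,\delta_\beta\,dt - \int_\alpha^\beta \varphi(0,\cdot)\,\delta_0\,dx = 0
\]
for every admissible $\varphi$. Choosing $\varphi$ of tensor-product form $\lambda(t)\omega(x)$, with $\omega$ a smooth bump concentrated near one of the three sides and away from the corners, isolates a single integral at a time; the arbitrariness of $\lambda$ (and, for the bottom side, of the cross-section $\omega$) then forces $\delta_0 = \delta_\alpha = \delta_\beta = 0$ almost everywhere. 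The main technical point is really just the careful bookkeeping of sign conventions in the three applications of \cite[Lemma~3.3]{CrippaDonadelloSpinolo}; no genuine difficulty arises at the corners of $Q$ because~\eqref{e:suppout42} pairs a single test function $\varphi$ with traces defined on disjoint flat pieces of $\partial Q$.
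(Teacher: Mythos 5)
Your proposal is correct and follows essentially the same route as the paper: the paper's proof is a one-line reduction to \cite[Lemma 3.3]{CrippaDonadelloSpinolo}, applied with $w = r\theta$ and with the vector field of that lemma given by $br\theta$, together with the observation that the hypothesis that the divergence of $b$ be a finite Radon measure is not needed for the relevant formula --- an issue your framing of $(r\theta, br\theta)$ as a single bounded divergence-free field sidesteps automatically. The only cosmetic difference is that you invoke the cited lemma once per flat side of the rectangle and then sum, whereas the paper obtains the traces from a single application; your uniqueness argument by isolating each side with tensor-product test functions is standard and sound.
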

\begin{proof}
We apply~\cite[Lemma 3.3]{CrippaDonadelloSpinolo} with $w= r\theta$ and $b$ in~\cite[Lemma 3.3]{CrippaDonadelloSpinolo} given by $b r \theta$ in here and we point out that to establish~\cite[formula (3.6)]{CrippaDonadelloSpinolo} we do not use the assumption that $\mathrm{div}\, b$ is a finite Radon measure. 
\end{proof}
\begin{remark}\label{r:giallo}
From now on we refer to $\mathrm{Tr} [br \theta] (\cdot, \alpha^+)$ and 
$\mathrm{Tr} [br \theta] (\cdot, \beta^-)$ as \emph{distributional traces}, whereas we refer to the functions $f(u_\alpha)$, $f(u_\beta)$ given by Theorem~\ref{l:verde} as \emph{strong traces}. The reason is the following: the functions $f(u_\alpha)$, $f(u_\beta)$ are \emph{strong} traces in the sense that they are attained as strong limits in the $L^1$ topology, whereas $\mathrm{Tr} [br \theta] (\cdot, \alpha^+)$ and 
$\mathrm{Tr} [br \theta] (\cdot, \beta^-)$ are \emph{distributional} traces and in general they are only attained as limits in the weak$^\ast$ topology, see the discussion in~\cite{AmbrosioCrippaManiglia}.  Also, we recall that in the following we will sometimes denote $f(u_\alpha)$ and $f(u_\beta)$ by $f(u)(\cdot, \alpha^+)$ and $f(u)(\cdot, \beta^-)$, respectively. 
\end{remark}
The next result asserts that the two notions of traces coincide when they are both defined. 
\begin{lemma} \label{l:maggio}
Fix $\alpha, \beta \in \R, T>0$, $v \in C^1(\R)$ and assume that $\rho_i \in L^\infty(]0, T[\times ]\alpha, \beta[)$ satisfies~\eqref{e:gennaio} with $u= \rho_i$, $f(\rho_i)= v(\rho_i) \rho_i$, then 
\be \label{e:maggio}
     v(\rho_i) \rho_i (\cdot, \alpha^+)= - \mathrm{Tr} [  v(\rho_i) \rho_i ] (\cdot, \alpha^+),
     \qquad 
     v(\rho_i) \rho_i (\cdot, \beta^-)= \mathrm{Tr} [  v(\rho_i) \rho_i ] (\cdot, \beta^-).  
\eq
\end{lemma}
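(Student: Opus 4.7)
The goal is to identify, via the distributional formula~\eqref{e:suppout42}, the traces given by Lemma~\ref{l:normaltrace2} with the strong traces produced by Theorem~\ref{l:verde}. First, I would observe that, by inserting $c = \pm M$ with $M \ge \|\rho_i\|_{L^\infty}$ into the entropy inequality~\eqref{e:gennaio} and combining the two resulting inequalities, the conservation law $\partial_t \rho_i + \partial_x[v(\rho_i)\rho_i] = 0$ holds in the sense of distributions on $]0,T[\times ]\alpha,\beta[$. Hence Lemma~\ref{l:normaltrace2} applies with $r\theta = \rho_i$ and $b = v(\rho_i)$, producing $\mathrm{Tr}[v(\rho_i)\rho_i](\cdot,\alpha^+)$ and $\mathrm{Tr}[v(\rho_i)\rho_i](\cdot,\beta^-)$; simultaneously, Theorem~\ref{l:verde} applied with $h = f$ yields strong $L^1$-traces $v(\rho_i)\rho_i(\cdot,\alpha^+)$ and $v(\rho_i)\rho_i(\cdot,\beta^-)$ satisfying~\eqref{e:febbraio}.

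To prove the first equality in~\eqref{e:maggio}, I fix $\lambda \in C^\infty_c(]0,T[)$ and, in the spirit of the proof of Proposition~\ref{p:datumclassic}, consider a family of cutoffs $\omega_\ee \in C^\infty_c(]-\infty,\beta[)$ such that $\omega_\ee \equiv 1$ on $[\alpha-1,\alpha]$, $\omega_\ee \equiv 0$ on $[\alpha+\ee,\beta]$, $\omega_\ee' \le 0$ on $[\alpha,\alpha+\ee]$, and $\int_\R |\omega_\ee'|\,dx = 1$. Plugging the test function $\varphi_\ee(t,x) = \lambda(t)\omega_\ee(x)$ into~\eqref{e:suppout42} annihilates both the boundary term at $\beta$ and the initial contribution, leaving
\begin{equation*}
  \int_0^T\!\!\!\int_\alpha^{\alpha+\ee} \rho_i\,\lambda'(t)\omega_\ee(x)\,dx\,dt \;+\; \int_0^T\!\!\!\int_\alpha^{\alpha+\ee} v(\rho_i)\rho_i\,\lambda(t)\omega_\ee'(x)\,dx\,dt \;=\; \int_0^T \lambda(t)\,\mathrm{Tr}[v(\rho_i)\rho_i](t,\alpha^+)\,dt.
\end{equation*}
The first summand on the left is $O(\ee)$ as $\ee\to 0$. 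Using $\int_\alpha^{\alpha+\ee} \omega_\ee'\,dx = -1$, the second summand can be rewritten as $-\int_0^T \lambda(t)\,v(\rho_i)\rho_i(t,\alpha^+)\,dt$ plus a remainder controlled by
\begin{equation*}
  \|\lambda\|_\infty\,\essup_{x \in (\alpha,\alpha+\ee)} \int_0^T \bigl| v(\rho_i)\rho_i(t,x) - v(\rho_i)\rho_i(t,\alpha^+) \bigr|\,dt,
\end{equation*}
which vanishes as $\ee\to 0$ by the strong-trace property~\eqref{e:febbraio}. The arbitrariness of $\lambda$ then yields the first identity in~\eqref{e:maggio}. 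The second identity is proved in exactly the same way, using a cutoff concentrated immediately to the left of $\beta$ and normalized so that $\int \omega_\ee'\,dx = +1$; it is precisely this orientation change of the outward normal that accounts for the absence of a minus sign at $\beta$.

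The only delicate step is the passage to the limit in the flux term. Since $\omega_\ee'$ is bounded only in $L^1$ (and blows up in $L^\infty$), weak convergence or convergence in measure of $v(\rho_i)\rho_i(\cdot,x)$ as $x\to\alpha^+$ would not suffice: one really needs the full strong $L^1$-trace property~\eqref{e:febbraio} provided by Theorem~\ref{l:verde}, and this is what matches the distributional trace of Lemma~\ref{l:normaltrace2} to the trace of the flux.
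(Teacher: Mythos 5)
Your proposal is correct and follows essentially the same route as the paper: both arguments localize a test function near the endpoint, use the strong $L^1$-trace property~\eqref{e:febbraio} from Theorem~\ref{l:verde} to pass to the limit in the term involving $\omega_\ee'$, and then match the resulting boundary integral against the distributional trace of Lemma~\ref{l:normaltrace2}. The only cosmetic difference is that the paper inserts the cutoff into the interior formulation~\eqref{e:dsoltheta} and handles both endpoints simultaneously before invoking the arbitrariness of $\varphi$, whereas you test~\eqref{e:suppout42} directly with a cutoff concentrated at one endpoint at a time; the key limit computation, and the reason strong (rather than weak) traces are needed, are identical.
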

Note that the distributional traces $\mathrm{Tr} [  v(\rho_i) \rho_i ] (\cdot, \alpha^+)$ and $ \mathrm{Tr} [  v(\rho_i) \rho_i ] (\cdot, \beta^-) $ are well defined since by choosing $c < - \| \rho_i \|_{L^\infty}$ and $c > \| \rho_i \|_{L^\infty}$ we deduce from~\eqref{e:gennaio} that $r=\rho_i$ satisfies~\eqref{e:dsoltheta}  with $\theta=1$ and $b = v(\rho_i)$ and hence we can apply Lemma~\ref{l:normaltrace2}. 
\begin{proof}[Proof of Lemma~\ref{l:maggio}]
We fix $\varphi \in C^\infty (]0, T[ \times \R)$ and consider the family of test functions 
$\phi_\ee(t, x) : = \varphi(t, x)[1-\omega_\ee (x)][1 - \omega_\ee (\alpha + \beta - x)]$, where $\omega_\ee$ is the same as in~\eqref{e:cremisi2}. We plug $\phi_\ee$ into~\eqref{e:dsoltheta} where $r=\rho_i$, $\theta=1$ and $b = v(\rho_i)$ and then let $\ee \to 0^+$. By using~\eqref{e:febbraio} we arrive at 
\begin{equation*}
    \begin{split}
    \int_0^T \! \! \int_{\alpha}^\beta \rho_i (\partial_t \varphi +  v(\rho_i) \partial_x \varphi) dx dt & = 
   - \int_0^T \! \!  \varphi \ v(\rho_i) \rho_i  (\cdot, \alpha^+)  dt + \int_0^T \! \!  \varphi \ v(\rho_i) \rho_i  (\cdot, \beta^-)  dt 
\end{split}
\end{equation*}
and by the arbitrariness of $\varphi$ this yields~\eqref{e:maggio}.
\end{proof}

The following result is well known and we provide the proof for the sake of completeness. 
\begin{lemma} \label{l:orecchie}
Under the same assumptions as in Lemma~\ref{l:normaltrace2}, let $d \in ]\alpha, \beta[$. Then 
\be \label{e:mirtillo}
       \mathrm{Tr} [b r \theta] (\cdot, d^+)= -  \mathrm{Tr} [b r \theta] (\cdot, d^-).
\eq
\end{lemma}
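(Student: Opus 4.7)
The idea is to apply Lemma~\ref{l:normaltrace2} twice, once to the two subdomains $]0,T[\times ]\alpha,d[$ and $]0,T[\times ]d,\beta[$ separately, and then to compare with the hypothesis~\eqref{e:dsoltheta} on the full strip $]0,T[\times ]\alpha,\beta[$. The identity~\eqref{e:mirtillo} will fall out of the fact that interior test functions on $]\alpha,\beta[$ see no contribution from the endpoints, and therefore the two boundary contributions coming from $d$ (from the left and the right) must cancel.

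Concretely, I would proceed as follows. First, restricting $r,b,\theta$ to $]0,T[\times ]\alpha,d[$, the hypothesis~\eqref{e:dsoltheta} still holds (with test functions supported away from the new boundary $x=d$), so Lemma~\ref{l:normaltrace2} provides traces at $\alpha^+$ and at $d^-$; similarly, applying it to $]0,T[\times ]d,\beta[$ gives traces at $d^+$ and at $\beta^-$. This is precisely the definition of $\mathrm{Tr}[br\theta](\cdot,d^-)$ and $\mathrm{Tr}[br\theta](\cdot,d^+)$.

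Next, I would fix an arbitrary $\varphi\in C^\infty_c(]0,T[\times ]\alpha,\beta[)$ and apply the identity~\eqref{e:suppout42} on each of the two subintervals. Because $\varphi$ vanishes at $t=0$ and at $x=\alpha,\beta$, the initial-datum term and the outer-boundary traces drop out, and we are left with
\begin{equation*}
\int_0^T\!\!\int_\alpha^d r\theta(\partial_t\varphi + b\,\partial_x\varphi)\,dx\,dt
= \int_0^T \varphi(\cdot,d)\,\mathrm{Tr}[br\theta](\cdot,d^-)\,dt
\end{equation*}
and
\begin{equation*}
\int_0^T\!\!\int_d^\beta r\theta(\partial_t\varphi + b\,\partial_x\varphi)\,dx\,dt
= \int_0^T \varphi(\cdot,d)\,\mathrm{Tr}[br\theta](\cdot,d^+)\,dt.
\end{equation*}
Summing these two equalities and invoking~\eqref{e:dsoltheta} (which makes the full integral on $]\alpha,\beta[$ vanish for $\varphi\in C^\infty_c(]0,T[\times ]\alpha,\beta[)$) yields
\begin{equation*}
\int_0^T \varphi(\cdot,d)\bigl[\mathrm{Tr}[br\theta](\cdot,d^-) + \mathrm{Tr}[br\theta](\cdot,d^+)\bigr]\,dt = 0.
\end{equation*}

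Finally, since the values $\varphi(\cdot,d)$ sweep out a dense subset of $C^\infty_c(]0,T[)$ as $\varphi$ varies (for instance one can take $\varphi(t,x)=\lambda(t)\chi(x)$ with $\chi\in C^\infty_c(]\alpha,\beta[)$ equal to $1$ near $d$, and $\lambda$ arbitrary in $C^\infty_c(]0,T[)$), the arbitrariness of $\lambda$ forces $\mathrm{Tr}[br\theta](\cdot,d^-) + \mathrm{Tr}[br\theta](\cdot,d^+)=0$, which is~\eqref{e:mirtillo}. The only point that requires minimal care is the bookkeeping of signs in~\eqref{e:suppout42}: both trace contributions at the interior point $d$ enter with the same $+$ sign because $d$ is the right endpoint of $]\alpha,d[$ and the left endpoint of $]d,\beta[$; this sign convention, inherited from Lemma~\ref{l:normaltrace2}, is exactly what produces the opposite-sign identity~\eqref{e:mirtillo} in the conclusion.
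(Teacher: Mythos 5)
Your proof is correct and follows essentially the same route as the paper: apply Lemma~\ref{l:normaltrace2} on the two subintervals $]\alpha,d[$ and $]d,\beta[$, add the resulting identities, and let the interior contributions at $d$ cancel by the arbitrariness of the test function. The only cosmetic difference is that you compare the sum against the hypothesis~\eqref{e:dsoltheta} using test functions compactly supported in $]0,T[\times]\alpha,\beta[$, whereas the paper compares against the full-interval identity~\eqref{e:suppout42} for general test functions; both yield the same cancellation.
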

In the above formula the functions $\mathrm{Tr} [b r \theta] (\cdot, d^+)$ and $\mathrm{Tr} [b r \theta] (\cdot, d^-)$ are obtained by applying Lemma~\ref{l:normaltrace2} to the intervals $]d, \beta[$ and $]\alpha, d[$, respectively. 
\begin{proof}
We fix $\varphi \in C^\infty_c (]- \infty, T[ \times \R)$, then 
\begin{equation*}
     \begin{split}
    \int_0^T \! \! \int_{\alpha}^d r \theta (\partial_t \varphi +  b \partial_x \varphi) dx dt & = 
    \int_0^T \! \!  \varphi \mathrm{Tr} [b r \theta] (\cdot, \alpha^+)  dt + \int_0^T \! \!  \varphi \mathrm{Tr} [b r\theta] (\cdot, d^-)  dt -
    \int_\alpha^d [r \theta]_0 \varphi(0, \cdot) dx 
\end{split}
\end{equation*}
and 
\begin{equation*}
     \begin{split}
    \int_0^T \! \! \int_{d}^\beta r \theta (\partial_t \varphi +  b \partial_x \varphi) dx dt & = 
    \int_0^T \! \!  \varphi \mathrm{Tr} [b r \theta] (\cdot, d^+)  dt + \int_0^T \! \!  \varphi \mathrm{Tr} [b r\theta] (\cdot, \beta^-)  dt  -
    \int_d^\beta [r \theta]_0 \varphi(0, \cdot) dx. 
\end{split}
\end{equation*}
By adding the above expressions, recalling~\eqref{e:suppout42} and using the arbitrariness of the test function $\varphi$ we arrive at~\eqref{e:mirtillo}. 
\end{proof}
\subsection{Well-posedness of the initial-boundary value problem for nearly incompressible vector fields in one space dimension}
We now quote some results from~\cite{DovettaMarconiSpinolo} we need in the following. 
We first recall that $b \in L^\infty(]0, T[ \times ]\alpha, \beta[)$ is a \emph{nearly incompressible vector field} if there is a function $\rho \in  L^\infty(]0, T[ \times ]\alpha, \beta[)$, $\rho \ge 0$, such that 
$
    \partial_t \rho + \partial_x [b \rho ]=0. 
$
We refer to~\cite{DL07} for an extended discussion on (possibly multi-dimensional) nearly incompressible vector fields. We assume that $b \ge 0$ and consider the initial-boundary value problem
\be \label{e:luglio}
     \left\{
     \begin{array}{ll}
                  \partial_t [\rho \theta] + \partial_x [b \rho \theta]=0 \\
                  \theta(0, \cdot) = \theta_0 \qquad \theta(\cdot, \alpha) = \bar \theta, \\
      \end{array}
     \right.
\eq
where the boundary condition is attained in the sense of~\cite[Definition 2.7]{DovettaMarconiSpinolo}, that is by requiring that 
\be \label{e:luglio2}
\mathrm{Tr}[b \rho \theta] (\cdot, \alpha^+) = \bar \theta \ \mathrm{Tr}[b \rho] (\cdot, \alpha^+).
\eq 
The above distributional traces are well defined owing to Lemma~\ref{l:normaltrace2}, see also the discussion in~\cite[\S2.3]{DovettaMarconiSpinolo}. By combining Theorem 1.2, Corollary 1.3 and Remark 6.2 in~\cite{DovettaMarconiSpinolo} we arrive at the following result. 
\begin{theorem}
\label{t:luglio}
Fix $T>0$ and a bounded from below interval $]\alpha, \beta[\subseteq \R$. 
Let $b \in L^\infty(]0, T[ \times ]\alpha, \beta[)$ be a nearly incompressible vector field with density $\rho$ and assume furthermore that $b\ge 0$. For every $\theta_0 \in L^\infty (]\alpha, \beta[)$ and $\bar \theta \in L^\infty (]0, T[)$ there is a distributional solution $\theta \in L^\infty (]0, T[ \times ]\alpha, \beta[)$ of the initial-boundary value problem~\eqref{e:luglio}, in the sense of~\cite[Definition 2.7]{DovettaMarconiSpinolo}. Also, the solution is unique in the following sense: if $\theta_a, \theta_b  \in L^\infty (]0, T[ \times ]\alpha, \beta[)$
are two different solutions, then $\rho \theta_a = \rho \theta_b$ a.e. on $]\alpha, \beta[ \times ]0, T[$. Finally, we have a comparison principle: if $\theta_{01} \ge \theta_{02}$ and $\bar \theta_1 \ge \bar \theta_2$, then the corresponding solutions satisfy $\rho \theta_1 \ge \rho \theta_2$ a.e. on $]\alpha, \beta[ \times ]0, T[$. 
\end{theorem}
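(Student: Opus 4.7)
The plan is to derive this theorem as a direct consequence of the three cited results from the companion paper \cite{DovettaMarconiSpinolo}, since the setup here (nearly incompressible $b$ with density $\rho$, sign condition $b\geq 0$, merely $L^\infty$ data, boundary condition prescribed at the inflow endpoint $\alpha$) is precisely the framework treated there. Concretely, for existence I would quote \cite[Theorem 1.2]{DovettaMarconiSpinolo}, checking that its hypotheses coincide with ours; the output is a distributional solution $\theta$ attaining the boundary datum in the sense of \cite[Definition 2.7]{DovettaMarconiSpinolo}, which after Lemma~\ref{l:normaltrace2} is exactly the condition~\eqref{e:luglio2} used here. For uniqueness up to multiplication by $\rho$, I would apply \cite[Corollary 1.3]{DovettaMarconiSpinolo} to any two solutions $\theta_a,\theta_b$; the conclusion $\rho\theta_a=\rho\theta_b$ a.e.\ is the best one can hope for because the equation carries no information on $\theta$ on $\{\rho=0\}$. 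For the comparison principle I would appeal to \cite[Remark 6.2]{DovettaMarconiSpinolo}.

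If one preferred to prove the comparison principle from scratch rather than cite it, my strategy would be to set $\eta_0:=(\theta_{02}-\theta_{01})^+$ and $\bar\eta:=(\bar\theta_2-\bar\theta_1)^+$, solve~\eqref{e:luglio} with these non-negative data via the existence result, and exhibit a solution $\eta\geq 0$ dominating $(\theta_2-\theta_1)^+$ in the $\rho$-weighted sense. The construction would mimic the standard renormalization argument: approximate the positive-part function by smooth convex truncations $\beta_\delta$, use renormalizability of nearly incompressible fields in one dimension (which is exactly the analytic core of \cite{DovettaMarconiSpinolo}), and read off the boundary sign from~\eqref{e:luglio2} together with the non-negativity of $\mathrm{Tr}[b\rho](\cdot,\alpha^+)$ (which follows from $b\geq 0$ and $\rho\geq 0$).

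The main obstacle that a proof must overcome is already handled by the companion paper and concerns the delicate interpretation of the boundary condition in the weak formulation~\eqref{e:luglio2}: the factor $\mathrm{Tr}[b\rho](\cdot,\alpha^+)$ on the right-hand side can vanish on a set of positive measure in $]0,T[$, so~\eqref{e:luglio2} does not uniquely determine a pointwise boundary value of $\theta$. This is simultaneously the reason why uniqueness can only be stated up to the equivalence relation $\theta\mapsto \rho\theta$ and the reason why one must work with distributional rather than strong traces at $\alpha$. Granted the tools of \cite{DovettaMarconiSpinolo}, the remaining verification is bookkeeping: matching the notational conventions, checking that the assumption $b\geq 0$ indeed corresponds to the inflow configuration covered by the cited results (with no boundary datum needed at $\beta$), and observing that the comparison principle is preserved under the $\rho$-weighted equivalence used for uniqueness.
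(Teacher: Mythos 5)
Your proposal matches the paper exactly: Theorem~\ref{t:luglio} is not proved in this paper but is stated as a direct combination of Theorem 1.2, Corollary 1.3 and Remark 6.2 of the companion paper \cite{DovettaMarconiSpinolo}, which is precisely the citation structure you use for existence, uniqueness up to multiplication by $\rho$, and the comparison principle. Your additional sketch of a renormalization-based proof of the comparison principle goes beyond what the paper records, but the core argument is the same.
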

Note that the uniqueness result in Theorem~\ref{t:luglio} is the best one can hope for since the equation at the first line of~\eqref{e:luglio} does not provide any information on $\theta$ on the set where $\rho$ vanishes.

\section{Proof of Theorem~\ref{l:tvflux}} \label{s:pmain2}
We now provide the proof of Theorem~\ref{l:tvflux}. Throughout all the proof, we denote by $u$ the entropy admissible solution of the Cauchy problem~\eqref{e:cl},~\eqref{e:cpdatum} and use the notation $w= f \circ u$. The exposition is organized as follows: in \S\ref{ss:prelim} we establish some preliminary results and in \S\ref{ss:proof} we complete the proof. We always focus on the case $f''\ge 0$. The case $f''\ge0$ follows by recalling that, if $u$ satisfies~\eqref{e:cl}, then $z(t, x): = u(t, -x)$ satisfies
$$
     \partial_t z + \partial_x [-f(z)] =0. 
$$
Also, without loss of regularity we can assume that $f''<0$. The general case $f'' \leq 0$ can be recovered by considering the sequence $f_\ee (u) : = f (u) - \ee u^2$ and then passing to the $\ee \to 0^+$ limit by arguing as in {\sc Step 3D} of \S\ref{ss:proof}. 
\subsection{Preliminary results} \label{ss:prelim}{\color{black}
\begin{lemma} \label{l:upward} 
Assume that $\gamma: ]t_1, t_2[ \to \R$ is a $C^1$ curve across which $u$ is discontinuous.  Fix $\tau \in ]t_1, t_2[$ and assume that there is a neighborhood $\mathcal U$ of $\big(\tau, \gamma (\tau) \big)$ such that there are continuous extensions of the entropy admissible solution $u$ to 
$\mathcal U \cap \{ (t, x): x \leq \gamma (t)\}$ and $\mathcal U \cap \{ (t, x): x \ge \gamma (t)\}$. If $\gamma'(\tau) \neq 0$, then 
\be
\label{e:upward}
        \lim_{t \to \tau^+}w(t, \gamma(\tau)) \leq \lim_{t \to \tau^-}w(t, \gamma(\tau))
\eq  
and, in particular, the above limits are well defined. If $\gamma'(\tau) = 0$, then $w$ is continuous at $(\tau, \gamma(\tau))$. 
\end{lemma}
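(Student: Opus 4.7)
The plan is to combine the Rankine--Hugoniot identity with the Lax entropy inequality along the $C^1$ discontinuity curve, and then read off~\eqref{e:upward} by a short case analysis on the sign of $\gamma'(\tau)$. I work under the paper's standing reduction, so $f$ is strictly concave. Let $U^-$ and $U^+$ denote the continuous extensions of $u$ to $\mathcal U \cap \{x \leq \gamma(t)\}$ and $\mathcal U \cap \{x \geq \gamma(t)\}$ respectively, and set $u^\pm := U^\pm(\tau, \gamma(\tau))$. Testing the weak form of~\eqref{e:cl} against smooth functions supported in $\mathcal U$ and integrating by parts on each side of $\gamma$ yields, along the curve,
\begin{equation*}
   \gamma'(t)\bigl(U^+(t, \gamma(t)) - U^-(t, \gamma(t))\bigr) = f\bigl(U^+(t, \gamma(t))\bigr) - f\bigl(U^-(t, \gamma(t))\bigr),
\end{equation*}
first a.e.\ in $t$; since both sides are continuous, the identity propagates pointwise, and at $\tau$ it reads $\gamma'(\tau)(u^+ - u^-) = f(u^+) - f(u^-)$. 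In addition, entropy admissibility combined with the strict concavity of $f$ gives the Lax inequality $u^- \leq u^+$ at each point of $\gamma$ in $\mathcal U$.

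With these two ingredients in hand, the case $\gamma'(\tau) \neq 0$ is essentially direct. If $\gamma'(\tau) > 0$, then for $t$ in a right (resp.\ left) punctured neighbourhood of $\tau$ one has $\gamma(\tau) < \gamma(t)$ (resp.\ $\gamma(\tau) > \gamma(t)$), so the point $(t, \gamma(\tau))$ lies strictly on the left (resp.\ right) side of $\gamma$, and the continuity of $U^\pm$ yields
\begin{equation*}
   \lim_{t \to \tau^+} w(t, \gamma(\tau)) = f(u^-), \qquad \lim_{t \to \tau^-} w(t, \gamma(\tau)) = f(u^+).
\end{equation*}
Multiplying $u^+ \geq u^-$ by $\gamma'(\tau) > 0$ and invoking Rankine--Hugoniot gives $f(u^+) \geq f(u^-)$, which is exactly~\eqref{e:upward}. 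The case $\gamma'(\tau) < 0$ is symmetric: the two one-sided limits swap roles, while Rankine--Hugoniot combined with $u^+ \geq u^-$ now gives $f(u^+) \leq f(u^-)$, again producing~\eqref{e:upward}.

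Finally, if $\gamma'(\tau) = 0$, Rankine--Hugoniot forces $f(u^+) = f(u^-)$, so the two continuous functions $f \circ U^-$ and $f \circ U^+$ take the same value at $(\tau, \gamma(\tau))$. Any sequence $(t_n, x_n) \to (\tau, \gamma(\tau))$ splits into a subsequence in $\{x \leq \gamma(t)\}$ and a subsequence in $\{x \geq \gamma(t)\}$, and both evaluate $w$ along continuous functions which agree with the common value $f(u^-) = f(u^+)$ at the limit point; hence $w$ is continuous at $(\tau, \gamma(\tau))$. The only mildly delicate step in the whole argument is upgrading Rankine--Hugoniot from an a.e.\ identity to a pointwise identity at the single time $\tau$, which is what unlocks the $\gamma'(\tau) = 0$ case; this upgrade is purely a continuity matter, so no new analytical difficulty arises beyond the classical structure of shock curves and the entropy inequality.
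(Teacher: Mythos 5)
Your proof is correct and follows essentially the same route as the paper's: the Rankine--Hugoniot relation along $\gamma$, the Lax condition $u^-\leq u^+$ from $f''<0$, and a case analysis on the sign of $\gamma'(\tau)$ identifying the one-sided limits of $w(\cdot,\gamma(\tau))$ with $f(u^\mp)$. The only difference is that you spell out the derivation of Rankine--Hugoniot from the weak formulation and its pointwise upgrade by continuity, which the paper simply asserts.
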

We point out that, by assumption, $\tau$ is neither the starting nor the final point of the curve $\gamma$. 
\begin{proof}[Proof of Lemma~\ref{l:upward}]
We set 
$$
    u^- : =      \lim_{x \to \gamma ( \tau)^-} u(t, x), \qquad 
    u^+ : =   \lim_{x \to \gamma ( \tau)^+}u(t, x),
$$
and note that the above limits exist since $u(\tau, \cdot) \in BV (\R)$. 
The Rankine-Hugoniot condition gives 
\be \label{e:rh} 
     f(u^+) - f (u^-) = \gamma' (\tau) [u^+ - u^-]
\eq
Also, the Lax admissibility condition yields $f'( u^-) \ge f'(u^+)$ and by the condition $f'' <0$ this implies $u^- \leq u^+$.

By recalling the condition defining $\mathcal U$, we conclude that 
both the limits $ \lim_{t \to \tau^+}w(t, \gamma(\tau))$ and  
       $ \lim_{t \to \tau^-}w(t, \gamma(\tau)) $ exist. If $\gamma'(\tau)>0$, then 
$$
        \lim_{t \to \tau^+}w(t, \gamma(\tau)) = f(u^-),  \qquad 
        \lim_{t \to \tau^-}w(t, \gamma(\tau)) = f(u^+)
$$
and by using the condition~\eqref{e:rh} we arrive at~\eqref{e:upward}.  If $\gamma'(\tau) < 0$ the analysis is similar. If $\gamma'(\tau)=0$ then $w$ is continuous at $(\tau, \gamma (\tau))$. 
\end{proof}}
\begin{lemma}
\label{l:key}
Assume $u_0 \in C^\infty_c (\R)$. Fix $x \in \R$ and a time interval $]0, T[$ and assume that $u$ is of class $C^1$ in a neighborhood of every point $(t, x)$ except for a finite number of points $(\tau_1, x), \dots, (\tau_\ell, x)$. We also assume that, for every $r=1, \dots, \ell$, {\color{black}the entropy admissible solution 
$u$ satisfies the assumptions of Lemma~\ref{l:upward} for a suitable $C^1$ curve $\gamma_r: ]t_{1r}, t_{2r}[ \to \R$ such that $\tau_r \in ]t_{1r}, t_{2r}[$ and $\gamma_r (\tau_r) =x$. }Then
\be
\label{e:tvm}
     \sup_{0 \leq t_1 < t_2 < \dots< t_p \leq T} \sum_{\alpha=1}^{p-1} 
     \big[ w(t_{\alpha+1}, x ) - w(t_{\alpha}, x )\big]^+ \leq C( \mathrm{Tot Var \, } w_0). 
\eq
In the previous expression, $[ \cdot ]^+$ denotes the positive part and $w_0: = f \circ u_0$\,.
\end{lemma}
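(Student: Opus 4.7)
The plan is to exploit the observation from Lemma~\ref{l:upward}: every discontinuity of the map $t\mapsto w(t,x)$ is either absent (when $\gamma_r'(\tau_r)=0$) or a \emph{downward} jump (when $\gamma_r'(\tau_r)\ne 0$). Hence the supremum in~\eqref{e:tvm} is the positive variation $V^+(w(\cdot,x))$ of a function with no upward jumps, and by additivity of $V^+$ over adjacent intervals it equals $\sum_{r=0}^{\ell}V^+(w|_{I_r})$, where $I_0,\ldots,I_\ell$ are the maximal open subintervals of $(0,T)$ between consecutive points $\tau_r$. The task therefore reduces to bounding $\sum_r V^+(w|_{I_r})$ in terms of $\mathrm{TotVar}\,w_0$.

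On each $I_r$, the assumption that $u$ is of class $C^1$ in a neighborhood of $\{x\}\times I_r$, combined with $u_0\in C^\infty_c(\R)$, allows the classical method of characteristics. From every $(t,x)$ with $t\in I_r$ the backward characteristic is a straight line of slope $f'(u(t,x))$ that reaches $t=0$ at a foot $y(t)$ satisfying $x=y(t)+t\,f'(u_0(y(t)))$; this backward extension is never obstructed because shocks only collect characteristics going forward in time, so a genuine characteristic issued backward from a $C^1$ point must survive down to $t=0$. Since $u$ is constant along characteristics, one has $w(t,x)=w_0(y(t))$ for every $t\in I_r$.

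The key structural fact, and the main obstacle of the proof, is that $t\mapsto y(t)$ is injective on $\bigcup_r I_r$. Indeed, if two distinct times $t_1\ne t_2$ produced the same foot $y_0$, then a single straight-line characteristic would intersect the vertical line $\{x\}\times\R$ at two different heights, which is possible only in the degenerate case $f'(u_0(y_0))=0$ and $y_0=x$; but in that case $y$ is constantly equal to $x$ on the relevant smooth interval and $w_0\circ y$ is itself constant, contributing nothing to $V^+$. Continuity together with injectivity forces $y$ to be strictly monotone on each $I_r$, and the very same collision argument shows that for $r\ne r'$ the images $y(I_r)$ and $y(I_{r'})$ are pairwise disjoint subintervals of $\R$.

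With these facts the end of the proof is a change of variables. On each $I_r$ the monotonicity of $y$ gives
\[
V^+\bigl(w_0\circ y|_{I_r}\bigr)\;\le\;\mathrm{TotVar}\bigl(w_0|_{y(I_r)}\bigr),
\]
regardless of the sense of monotonicity, and summing over $r$ while exploiting the disjointness of $\{y(I_r)\}_r$ yields
\[
\sum_r V^+(w|_{I_r})\;\le\;\sum_r \mathrm{TotVar}\bigl(w_0|_{y(I_r)}\bigr)\;\le\;\mathrm{TotVar}\,w_0,
\]
which is \eqref{e:tvm} with $C(\mathrm{TotVar}\,w_0)=\mathrm{TotVar}\,w_0$.
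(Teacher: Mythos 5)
Your proof is correct, but it organizes the argument around a genuinely different decomposition than the paper's. Both proofs rest on the same foundation: the backward characteristic from $(t,x)$ is a shock-free straight line reaching $t=0$ at a foot $y(t)$ with $w(t,x)=w_0(y(t))$, and Lemma~\ref{l:upward} guarantees that the jumps of $w(\cdot,x)$ at the times $\tau_r$ are downward. The paper then splits the time axis into the sets $E^\pm$ of times whose foot lies to the left/right of $x$, proves that the foot map is monotone on each of $E^-$ and $E^+$ by invoking the no-crossing property of shock-free characteristics (\cite[Corollary 11.1.2]{Dafermos:book}), and uses Lemma~\ref{l:upward} to control the transitions between $E^-$ and $E^+$; this yields the three sums $S_1,S_2,S_3$ and a constant of the order of $3\,\mathrm{TotVar}\,w_0$. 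You instead split at the discontinuity times $\tau_r$, derive monotonicity of $y$ on each smooth piece from the elementary remark that two genuine characteristics sharing a foot $y_0$ carry the same value $u_0(y_0)$, hence are the same line, which forces the degenerate case $y_0=x$, $f'(u_0(x))=0$; the same remark gives pairwise disjointness (up to the single point $x$) of the images $y(I_r)$, so Lemma~\ref{l:upward} is needed only to annihilate the jumps at the $\tau_r$ and you obtain the sharper constant $\mathrm{TotVar}\,w_0$. Two small caveats. First, the assertion that the backward characteristic issued from a $C^1$ point is an \emph{unobstructed} straight line down to $t=0$ along which $u$ equals $u_0$ at the foot is exactly the nontrivial content of the generalized-characteristics theorems the paper cites (\cite[Theorems 10.2.2, 10.3.1 and 11.1.1]{Dafermos:book}), and it is where the convexity hypothesis enters; ``shocks only collect characteristics going forward'' is the right intuition but should be replaced by these citations, since $u$ is only assumed smooth near the vertical segment, not along the whole characteristic. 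Second, as the paper observes just before its proof, one should fix the convention that the sampling points $t_\alpha$ avoid the $\tau_r$, because the pointwise value $w(\tau_r,x)$ is not controlled by the one-sided limits appearing in Lemma~\ref{l:upward}; with that convention your reduction of the supremum in~\eqref{e:tvm} to $\sum_r V^+(w|_{I_r})$ is exact.
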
 
Note that, owing to the fact that $w(\cdot, x)$ is smooth outside $\tau_1, \dots, \tau_\ell$, in computing the supremum in~\eqref{e:tvm} we can assume without loss of generality that the sampling points do not coincide with any discontinuity point, that is $t_\alpha \neq \tau_r$, for every $\alpha =1, \dots, p$ and every $r=1, \dots, \ell$. This yields that the value $w(t_{\alpha}, x )$ is well defined. 
\begin{proof}[Proof of Lemma~\ref{l:key}]
We fix a sampling $t_1, \dots, t_p$ and by relying on the above observation we assume without loss of generality that $t_\alpha \neq \tau_r$, for every $\alpha =1, \dots, p$ and every $r=1, \dots, \ell$. To establish~\eqref{e:tvm} we use  the theory of so-called \emph{generalized characteristics} and we refer to~\cite{Dafermos:book} for a comprehensive introduction. The rest of the proof is organized according to the following steps.\\
{\sc Step 1:} we establish some properties of generalized characteristics that we need in the following. We term $\xi^-_{t}$ and $\xi^+_t$ the minimal backward and the maximal backward characteristic emanating from the point $(t, x)$, see~\cite[Theorem 10.2.2]{Dafermos:book}. We apply~\cite[Theorem 10.3.1]{Dafermos:book} and we conclude that, for every $t \in \R_+$,  $\xi_t^-$ and $\xi_t^+$ are a left and a right contact, respectively, in the sense of~\cite[Definition 10.2.5]{Dafermos:book}. {\color{black}By applying~\cite[Formula (11.1.10)]{Dafermos:book} in virtue of the fact that $f'' <0$,} we conclude that $\xi_t^-$ and $\xi^+_t$ are \emph{shock-free}, in the sense of~\cite[Definition 10.2.4]{Dafermos:book}. We apply~\cite[Theorem 11.1.1]{Dafermos:book} and conclude that, for every $t \in \R_+$, $\xi^-_t$ and $\xi^+_t$ are segments with constant slope.

Next, we fix $\alpha =1, \dots, p$, we recall that $t_\alpha \neq \tau_r$, for every $r=1, \dots, \ell$ and by using~\cite[Theorem 11.1.3]{Dafermos:book}  we conclude that $\xi^-_{t_\alpha} \equiv \xi^+_{t_\alpha}$. Owing to~\cite[Theorem 10.2.2]{Dafermos:book} we conclude that there is a unique backward characteristic emanating from the point $(t_\alpha, x)$ and we denote it by $\xi_{t_\alpha}: \R_+ \to \R$. We recall that, by the previous analysis, $\xi_{t_\alpha}$ is shock-free, in the sense of~\cite[Definition 10.2.4]{Dafermos:book}.  Finally, we apply \cite[Theorem 11.1.1]{Dafermos:book} and we conclude that $u$ (and henceforth $w$) is constant along $ \xi_{t_\alpha}$. \\
{\sc Step 2:} we set 
\be \label{e:epm}
    E^- := \big\{ t \in ]0, T[: \; \xi_t^- (0) \leq x \big\},
    \qquad  
    E^+ : = \big\{ t \in  ]0, T[: \; \xi_t^+ (0) \ge x \big\},
\eq 
we point out that $]0,T[ = E^- \cup E^+$ and we establish the following property: the maps $t \mapsto \xi_t^- (0)$ and  $t \mapsto \xi_t^+ (0)$ are monotone non increasing on $E^-$ and monotone non decreasing on $E^+$, respectively.

We show that  $t \mapsto \xi_t^+ (0)$ is monotone non decreasing on $E^+$ , the proof of the other claim is analogous.  Assume by contradiction that there are $t_1, t_2 \in E^+$, $t_1 < t_2$, such that $\xi^+_{t_1} (0) > \xi^+_{t_2}(0)$.  Owing to {\sc Step 1}, $\xi^+_{t_1}$ and $\xi^+_{t_2}$ both have constant slope. Since $t_1< t_2$ and $\xi^+_{t_1} (0) > \xi^+_{t_2}(0)$, then $\xi_{t_1} $ and  $\xi_{t_2}$ must cross at some $s< t_1$. Since, by {\sc Step 1}, $\xi^+_{t_1}$ and $\xi^+_{t_2}$ are both shock free, this contradicts~\cite[Corollary 11.1.2]{Dafermos:book} and hence concludes the proof of the claim. \\
{\sc Step 3:} since $]0, T[ = E^- \cup E^+$, then 
\be \label{e:split}
     \sup_{0 \leq t_1 < t_2 < \dots< t_p \leq T}\sum_{\alpha=1}^{p-1} 
     \big[ w(t_{\alpha+1}, x ) - w(t_{\alpha}, x )\big]^+ \leq S_1 + S_2 + S_3,
\eq
where $S_1$ is the supremum of the sum over the $\alpha$-s such that $t_{\alpha+1}$ and  $t_\alpha$ both belong to $E^-$,  $S_2$ is the supremum of the sum over the $\alpha$-s such that $t_{\alpha+1}$ and  $t_\alpha$ both belong to $E^+$ and $S_3$ is the supremum of the sum over the $\alpha$-s such that $t_{\alpha+1} \in E^-$ and  $t_\alpha \in E^+$, or viceversa. 

We first control the term $S_1$ in~\eqref{e:split}.  
We fix $\alpha$ such that $t_{\alpha+1}, t_\alpha \in E^-$. We recall that owing to {\sc Step 1} the backward characteristics emanating from $(t_\alpha, x)$ and $(t_{\alpha+1}, x)$ are both unique and that the function $w$ is constant along $\xi_{t_{\alpha +1} }$ and $\xi_{t_{\alpha} }$. This implies  
\begin{equation*}
   [w(t_{\alpha+1}, x ) - w(t_{\alpha}, x )]^+  = [ w (\xi_{t_{\alpha+1}}(0)) - w(\xi_{t_\alpha} (0)) ]^+  \leq | w_0 (\xi_{t_{\alpha+1}}(0)) - w_0(\xi_{t_{\alpha}}(0)) |  . 
\end{equation*}
Next, we sum all the above contributions for $t_\alpha, t_{\alpha+1} \in E^-$ and we recall that the map $t \mapsto \xi^-_{t}(0)$ is monotone on $E^-$. This implies that when summing we are never computing twice the same interval and hence we eventually arrive at 
\be \label{e:esse1}
   S_1 \leq \mathrm{TotVar} \, w_0.
\eq
By analogous considerations 
$S_2 \leq\ \mathrm{TotVar} \, w_0$.  \\
{\sc Step 4:} we control the term $S_3$. We fix $\alpha$ and, just to fix the ideas, we assume that $t_{\alpha+1} \in E^+$, $t_\alpha \in E^-$. We set
$$
   s_\alpha : = \sup \{s \leq t_{\alpha+1}: \; s \in E^- \}.
$$
Next, we point out that 
\begin{equation*} \label{e:barta}
\begin{split}
     \big[ w(t_{\alpha+1}, x ) - w(t_{\alpha}, x )\big]^+ & \leq
      \big[ w(t_{\alpha+1}, x )- \lim_{s \to  s_{\alpha}^+ } w(s, x) \big]^++ 
    \big[  \lim_{s \to s_{\alpha}^+ } w(s, x) -
     \lim_{s \to s_{\alpha}^- } w(s, x) \big]^+ \\
 & \quad +
   [\lim_{s \to s_{\alpha}^- } w(s, x) - w(t_{\alpha}, x )\big]^+.
\end{split}
\end{equation*}
We now separately consider two cases. If $w(\cdot, x)$ is continuous at $s_\alpha$, then the second term in the above sum vanishes. If $w(\cdot, x)$ is not continuous at $s_\alpha$, then $s_\alpha$ must coincide with one of the discontinuity points $\tau_1, \dots, \tau_\ell$.   
We can then apply Lemma~\ref{l:upward} and owing to~\eqref{e:upward} we conclude that also in this case the second term in the above sum vanishes. This yields
\begin{equation*}
     \big[ w(t_{\alpha+1}, x ) - w(t_{\alpha}, x )\big]^+  \leq
      \big[ w(t_{\alpha+1}, x )- \lim_{s \to  s_{\alpha}^+ } w(s, x) \big]^++ 
   [\lim_{s \to s_{\alpha}^- } w(s, x) - w(t_{\alpha}, x )\big]^+
\end{equation*}
and we can control  the above terms by arguing as in {\sc Step 3}. This yields 
$$
    S_1 + S_2 + S_3 \leq C(\mathrm{Tot Var}\ w_0) 
$$
and owing to~\eqref{e:split} concludes the proof of~\eqref{e:tvm}. 
\end{proof}
\subsection{Proof of Theorem~\ref{l:tvflux}} \label{ss:proof} We proceed according to the following steps. \\
{\sc Step 1:} we show that~\eqref{e:tvm} implies~\eqref{e:tvflux2}.  This establishes~\eqref{e:tvflux2} provided $u(\cdot, x)$ has the same regularity as in the statement of Lemma~\ref{l:key}. 
To deduce~\eqref{e:tvflux2} from~\eqref{e:tvm} we recall that, if $u(\cdot, x)$ has the same regularity as in the statement of Lemma~\ref{l:key}, then 
 \be \label{e:deftv}
    \mathrm{Tot Var} \, w(\cdot, x) =   \sup_{0 \leq t_1 < t_2 < \dots< t_p \leq T} \sum_{\alpha=1}^{p-1} 
     | w(t_{\alpha+1}, x ) - w(t_{\alpha}, x ) |. 
\eq
Next, we point out that 
$$
   \sum_{\alpha=1}^{p-1} 
     [ w(t_{\alpha+1}, x ) - w(t_{\alpha}, x ) ]^+ =
   \sum_{\alpha=1}^{p-1} 
     [ w(t_{\alpha+1}, x ) - w(t_{\alpha}, x ) ]^- + w(t_p, x) - w(t_1, x)
$$
and by plugging the above expression into~\eqref{e:deftv} and using~\eqref{e:tvm} we arrive at
\be \label{e:carro}
        \mathrm{Tot Var} \, w(\cdot, x) \leq C(  \mathrm{Tot Var} \, w_0)
\eq 
{\sc Step 2:} we establish~\eqref{e:tvflux2} under the further assumption that $u$ is smooth ouside (a) a finite number of $C^1$ curves, the so-called \emph{shocks}, across which $u$ has a jump discontinuity.
{\color{black}At every point $(\tau, \gamma (\tau))$ belonging to the shock curve the assumptions of Lemma~\ref{l:upward} are satisfied}; (b) a finite number of points where two shocks interact (i.e., they intersect). {\color{black}We assume (a) and (b) and apply the Coarea Formula to each shock curve (or more precisely, to the $C^1$ function parameterizing each shock curve) and conclude that the hypotheses of Lemma~\ref{l:key} hold true for every $x \in \R \setminus N$, where $N$ is a negligible set. Owing to {\sc Step 1}, this implies that estimate~\eqref{e:carro} holds true for every $x \in \R \setminus N$.} Next, we recall Lemma~\ref{l:dafermos} and the fact that the total variation is lower semicontinuous with respect to weak$^\ast$ convergence: this implies that~\eqref{e:carro} holds true \emph{for every} $x \in \R$. \\
{\sc Step 3:} we conclude the proof of Theorem~\ref{l:tvflux}. \\
{\sc Step 3A:} we point out that, owing to the chain rule for $BV$ functions (see for instance~\cite[Theorem 3.96]{AmbrosioFuscoPallara}), the fact that~\eqref{e:carro} holds for every $x \in \R$ yields~\eqref{e:tvflux2}. \\
{\sc Step 3B:} by relying on a standard truncation and mollification argument, we construct a sequence $\{u_{0n} \}_{n \in \mathbb N} \subseteq C^\infty_c (\R)$ such that 
\be \label{e:fossile}
    u_{0n} \to u_0, \quad \mathrm{TotVar} \, u_{0n} \to \mathrm{TotVar} \, u_0 \quad 
    \text{as $n \to + \infty$}.
\eq
{\sc Step 3C:} we apply the Schaeffer Regularity Theorem~\cite{Schaeffer}. In particular, we apply the results by Dafermos~\cite{Daf} and we recall that, since the flux function $f$ satisfies $f'' <0$, then the entropy admissible solution $u$ of~\eqref{e:cl},~\eqref{e:cpdatum} does not have contact discontinuities. By using~\cite[\S3]{Daf} we conclude that from the sequence  $\{u_{0n} \}_{n \in \mathbb N}$ we can construct a second sequence $\{z_{0n} \}_{n \in \mathbb N} \subseteq C^\infty_c (\R)$ such that~\eqref{e:fossile} holds true and furthermore the entropy admissible solution $u_n$ of the Cauchy problem obtained by coupling~\eqref{e:fossile} with the condition $u(0, \cdot)= z_{0n}$ satisfies conditions (a) and (b) in {\sc Step 2}. Note that $u_n$ satisfies~\eqref{e:tvflux2}. \\
{\sc Step 3D:} we conclude the proof. We recall that the semigroup of entropy admissible solutions of~\eqref{e:cl},~\eqref{e:cpdatum} is $L^1$ stable with respect to the initial data, see~\cite[Formula (6.2.9)]{Dafermos:book}. We conclude that the sequence $u_n$ constructed in {\sc Step 3C}
converges to the entropy admissible solution $u$ of the Cauchy problem~\eqref{e:cl},~\eqref{e:cpdatum} in $L^1 (]0, T[ \times \R)$. This implies that, up to subsequences, for almost every $x \in \R$,  $u_n (\cdot, x)$ converges to $u(\cdot, x)$ in $L^1 (]0, T[)$ and hence, by the lower semicontinuity of the total variation with respect to the $L^1$ strong convergence, $w(\cdot, x)$ satisfies~\eqref{e:tvflux2}. Finally, we recall Lemma~\ref{l:dafermos} and the fact that the total variation is lower semicontinuous with respect to weak$^\ast$ convergence: this implies that~\eqref{e:tvflux2} is satisfied {for every} $x \in \R$.    \qed
\begin{remark}\label{r:iride}
By relying on the proof of Theorem~\ref{l:tvflux} one realizes that, if the initial datum $u_0$ is continuous, then one can control the left hand side of~\eqref{e:tvflux2} with $\mathrm{TotVar}\ w_0$. This is however not true in general: as a counterexample one can consider the Burgers' equation 
\be
\label{e:burgers}
      \partial_t u + \partial_x \left( u^2 \right)=0
\eq
and couple it with the Riemann-type initial datum 
\be \label{e:riemann}
    u_0 (x) : = \left\{ 
    \begin{array}{ll}
      -1 & x<0 \\
      1 & x>0. \\ 
    \end{array}
   \right.
\eq
Note that in this case $\mathrm{TotVar} \, w_0 =0$, however the solution of the Riemann problem~\eqref{e:burgers},~\eqref{e:riemann} is a rarefaction and~\eqref{e:tvflux2} fails. 
\end{remark}
\section{Further results on the time $BV$ regularity of entropy admissible solutions} \label{s:other}
\subsection{Proof of Proposition~\ref{p:borraccia2}}
In the following, just to fix the ideas, we assume $f' \ge 0$ on $[\operatorname{ess~inf} u_0, \operatorname{ess~sup} u_0]$, the proof in the case $f' \leq 0$ is analogous. 
We first establish~\eqref{e:borraccia2} in the case of wave front-tracking approximate solutions and then we pass to
the limit.
\subsubsection{Wave front-tracking approximation}
For the reader's convenience we briefly recall the construction of the wave front-tracking approximation. First,  we fix $\nu \in \mathbb N$ and we consider the conservation law 
\be \label{e:blu}
    \partial_t u^\nu + \partial_x [f^\nu(u^\nu)] = 0,
\eq 
where $f^\nu$ is the piecewise affine approximation of $f$ defined by interpolating the values of $f$ and setting 
\begin{equation*}
f^\nu(u) : = \frac{u-2^{-\nu}j}{2^{-\nu}}f(2^{-\nu}(j+1)) + \frac{2^{-\nu}(j+1) - u }{2^{-\nu}}f(2^{-\nu}j)
\quad \text{if $u \in [2^{-\nu}j, 2^{-\nu}(j+1)], j \in \mathbb{Z}$}.
\end{equation*} 
Next, we fix $u_0^\nu:\R \to 2^{-\nu}\mathbb{Z}$ with bounded variation and compact support and we assign the initial datum 
\be \label{e:rosso}
     u^\nu(0, \cdot) = u_0^\nu.
\eq
We then define the wave front-tracking approximate solution as the entropy admissible solution of the Cauchy problem~\eqref{e:blu},~\eqref{e:rosso}. Note that $u^\nu$ attains values in $2^{-\nu}\mathbb{Z}$ and that, for every $t>0$,  the function $u^\nu(t, \cdot)$ is piecewise constant. 
Note furthermore that the discontinuity points of $u^\nu$ are contained in the graphs of finitely many Lipschitz continuous curves (the so-called \emph{fronts}) $x_j$,  
$j = 1,\ldots, N$, and that the assumption $f'\ge 0$ yields $d x_j/dt  \ge 0$ for every $j=1,\ldots, N$.
We finally recall that there are only finitely many times at which a collision between two or more different fronts occurs. More precisely,  
we say that two fronts $x_i$ and $x_j$ collide at the time $\bar t$ if 
\begin{equation}
x_i(\bar t)= x_j(\bar t) \qquad \mbox{and} \qquad \exists \, \varepsilon >0  : x_i(t)\ne x_j(t) \quad \forall t \in ]\bar t - \varepsilon, \bar t[.
\end{equation}
We now establish~\eqref{e:borraccia2} in the case of wave front-tracking approximate solutions. 
\begin{lemma}\label{L_wft}
Fix $f \in C^2 (\R)$, $\nu \in \mathbb{N}$ and term $u^\nu$ the wave front-tracking approximate solution with initial datum $u_0^\nu$.
If $f'\ge 0$ on the interval $[\operatorname{ess~inf} u_0, \operatorname{ess~sup} u_0]$, then  
\begin{equation}\label{E_wft}
\mathrm{Tot Var} \ u^\nu(\cdot, x) \le \mathrm{Tot Var} \ u_0^\nu
\quad \text{for every $x \in \R$}.
\end{equation}
\end{lemma}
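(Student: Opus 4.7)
The plan is to turn the temporal total variation at a fixed vertical line $\{y=x\}$ into a balance identity for a one-sided spatial total variation that is manifestly monotone non-increasing in time. First I would observe that, by the assumption $f'\ge 0$ on $[\operatorname{ess~inf} u_0, \operatorname{ess~sup} u_0]$ together with the maximum principle for the wave front-tracking solution, the range of $u^\nu$ stays inside this interval and therefore every front $y_j(t)$ satisfies $\dot y_j(t)\ge 0$. Since $u^\nu(\cdot,\cdot)$ is piecewise constant and discontinuous only across the fronts, the map $t\mapsto u^\nu(t,x)$ is itself piecewise constant in $t$ and jumps exactly when some front crosses the line $\{y=x\}$. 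Hence, writing $\sigma_j(t):=|u^\nu(t,y_j(t)^+)-u^\nu(t,y_j(t)^-)|$ for the strength of the $j$-th front and $t_j^\ast$ for the time at which $y_j$ crosses $x$,
$$
\mathrm{Tot Var}_{[0,T]} u^\nu(\cdot, x) \;=\; \sum_{j:\,y_j(t_j^\ast)=x} \sigma_j(t_j^\ast)\;=:\;F_x.
$$

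Next I would introduce the one-sided spatial total variation
$$
V_L(t,x) \;:=\; \mathrm{Tot Var}_{(-\infty,x]} u^\nu(t,\cdot)\;=\; \sum_{j:\, y_j(t)\le x} \sigma_j(t),
$$
and verify that $t\mapsto V_L(t,x)$ is non-increasing. There are only two kinds of events that can change it: (a) a front reaches $y_j(t)=x$ and, since $\dot y_j\ge 0$, leaves the half-line $(-\infty,x]$, so $V_L$ drops by exactly $\sigma_j$; (b) an interaction occurs at some position $y^\ast \le x$, at which the outgoing fronts are determined by the Riemann problem between the leftmost and rightmost incoming states, and the triangle inequality gives that the sum of outgoing strengths is bounded by the sum of incoming strengths, so $V_L$ can only decrease. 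Crucially, no front can enter $(-\infty,x]$ from the right because every characteristic speed is non-negative. Summing all these contributions over $[0,T]$ yields the balance identity
$$
V_L(0,x) - V_L(T,x) \;=\; F_x + C_L(x),
$$
where $C_L(x)\ge 0$ collects the (non-negative) total cancellation produced by interactions in $(-\infty, x]$. Since $V_L(T,x)\ge 0$ and $V_L(0,x)\le \mathrm{Tot Var}\, u_0^\nu$, this gives at once
$$
\mathrm{Tot Var}_{[0,T]} u^\nu(\cdot, x) \;=\; F_x \;\le\; V_L(0,x) \;\le\; \mathrm{Tot Var}\, u_0^\nu,
$$
which is the desired estimate \eqref{E_wft}.

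The main technical point is step (b): justifying that interactions among fronts never increase the strength sum, in the presence of the piecewise affine flux $f^\nu$ where incoming fronts can include both shocks and rarefaction fronts of size $2^{-\nu}$. This is standard for scalar wave front-tracking but needs to be spelled out. A minor issue is what happens for the finitely many values of $x$ that coincide with an initial discontinuity point of $u_0^\nu$ or with the position of a front interaction: for such $x$ one either handles $x^+$ and $x^-$ separately or simply takes a limit from generic $x$'s, since in any case the traces $u^\nu(\cdot,x^\pm)$ are well defined and the estimate passes to the limit.
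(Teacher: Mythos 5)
Your proposal is correct and follows essentially the same route as the paper: both arguments exploit $\dot y_j\ge 0$ to show that the one-sided spatial variation $\mathrm{TotVar}_{(-\infty,x]}\,u^\nu(t,\cdot)$ is non-increasing and that its decrease at each crossing time is exactly transferred to the temporal variation at $x$ (the paper packages this as the monotonicity of a functional $G^\nu_{\bar x}=G^\nu_{\bar x,1}+G^\nu_{\bar x,2}$, which is your balance identity in differential form), with interactions only causing further cancellation. The handling of the exceptional set of $x$'s by a limiting argument, which you flag as a minor issue, is likewise how the paper concludes, via the trace-continuity lemma and lower semicontinuity of the total variation.
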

\begin{proof}
We denote as before by $x_j$, $j=1, \dots, n$, the wave fronts and we point out that
\begin{itemize}
\item[i)] since $d x_j/dt  \ge 0$ for every $j = 1,\ldots, N$, then for every but countably many $x \in \R$ there is a unique time $ t_j$ such that $x_j( t_j)= x$ ;
\item[ii)] by removing (if needed) a further finite set of $x$-s we can assume that no collision between different fronts occurs at time $t_j$;
\item[iii)] for the same $x$ as in point i) we have 
$u^\nu(t,  x^+)=u^\nu(t,  x^-)$ for every but finitely many $t \in ]0, + \infty[$.
\end{itemize}
Fix $\bar x$ satisfying properties i), ii) and iii) above and consider the function  $G^\nu_{\bar x} : ]0, + \infty[ \to \R$ defined by setting $G^\nu_{\bar x} (t) := G^\nu_{\bar x,1} (t)+ G^\nu_{\bar x,2} (t)$, where 
\begin{equation} \label{e:canarino}
G^\nu_{\bar x,1}(t)= \mathrm{TotVar}_{]-\infty,\bar x[} u^\nu(t,\cdot) \qquad \mbox{and} \qquad G^\nu_{\bar x,2}(t) =  \mathrm{TotVar}_{]0,t[} u^\nu(\cdot,\bar x) + |u^\nu(t^+, \bar x)- u^\nu(t^-, \bar x) |. 
\end{equation}
Assume for a moment that we have shown that $G^\nu_{\bar x}$ is a monotone non increasing function, then this yields~\eqref{E_wft} since
$$
\mathrm{TotVar} \ u^\nu(\cdot, \bar x) \le \lim_{t \to \infty} G^\nu_{\bar x}(t)\le \lim_{t \to 0^+} G^\nu_{\bar x}(t) = \mathrm{TotVar}_{]-\infty,\bar x[} \ u^\nu_0 \leq 
\mathrm{TotVar} \ u^\nu_0.
$$
We are thus left to show that $G^\nu_{\bar x}$ is a monotone non increasing function.
To this end, we point out that, by construction, the functions $G^\nu_{\bar x,1}, G^\nu_{\bar x, 2}$ and therefore  $G^\nu_{\bar x}$ are piecewise constant. Also, $G^\nu_{\bar x,1}$ is a monotone non increasing function: more precisely, it can only diminish at times when an interaction between wave fronts occurs on the interval $]-\infty, \bar x[$. Since discontinuities of  $G^\nu_{\bar x,2}$ can only occur at a point $t_j$ as in item i) above, to conclude it suffices to show that 
for every $j=1,\ldots, N$ we have $G^\nu_{\bar x}(t_j^-)=G^\nu_{\bar x}(t_j^+)$. To this end, we point out that
\begin{equation}
\begin{split}
G^\nu_{\bar x,1}(t_j^+)  - G^\nu_{\bar x,1}(t_j^-) = &-  |u^\nu(t_j,
\bar x^+)- u^\nu(t_j,\bar x^-)|  \\
G^\nu_{\bar x,2}(t_j^+)  - G^\nu_{\bar x,2}(t_j^-) = & |u^\nu(t_j,\bar x^+)- u^\nu(t_j,\bar x^-)|.  \\
\end{split}
\end{equation}
In particular $G^\nu_{\bar x}(t_j^-)= G^\nu_{\bar x,1}(t_j^-) +G^\nu_{\bar x,2}(t_j^-) = G^\nu_{\bar x,1}(t_j^+) +G^\nu_{\bar x,2}(t_j^+) = G^\nu_{\bar x}(t_j^+)$. This establishes~\eqref{E_wft} for every $ \bar x \in E$ for some suitable set $E$ such that $\mathcal L^1(\R\setminus E)=0$. We are left to show that actually~\eqref{E_wft} holds for every $x \in \R$: to this end, we recall Lemma \ref{L_trace} and the lower
semicontinuity of the total variation with respect to the $L^1$-convergence and we conclude that 
\begin{equation}\label{E_TV_trace}
\begin{split}
\mathrm{TotVar} \ u^\nu(\cdot,  x^+) & \le \liminf_{ x_n \uparrow  x}\mathrm{TotVar}\ u^\nu(\cdot,  x_n) \leq \mathrm{TotVar} \ u_0, \\
\mathrm{TotVar} \ u^\nu(\cdot,  x^-)& \le \liminf_{ y_n \downarrow  x}\mathrm{TotVar} \ u^\nu(\cdot,  y_n) \leq \mathrm{TotVar} \ u_0, 
\end{split}
\end{equation}
for suitable sequences $\{ x_n\}_{n \in \mathbb{N}}, \{y_n\}_{n \in \mathbb{N}} \subseteq E$.  
\end{proof}

\begin{proof}[Proof of Proposition~\ref{p:borraccia2}]
Given $u_0 \in BV(\R)$  we fix a family $\{ u_0^\nu\}  \subseteq BV(\R)$ with compact support attaining values in $2^{-\nu}\mathbb{Z}$ such that 
$u_0^\nu \to u_0$ in $L^1(\R)$ and $\mathrm{TotVar}\ u_0^\nu \to \mathrm{TotVar} \ u_0$ as $\nu \to 0^+$ (see \cite[Lemma 2.2]{Bressan_book}).
Let $u^\nu$ be the corresponding family of wave front-tracking approximate solutions with initial datum $u_0^\nu$. By the analysis in~\cite[Chapter 6]{Bressan_book} we infer that $u^\nu \to u$ in $L^1_{\mathrm{loc}}(]0, + \infty[ \times \R)$. This implies that 
$
u^\nu (\cdot, x) \to u(\cdot,x)$  in $L^1_{\mathrm{loc}}(]0, + \infty[) $
for a.e. $x \in \R$. Since for a.e. $x \in \R$ we have $u(\cdot, x^+)=u(\cdot, x^-)$, then by combining 
Lemma \ref{L_wft} with the lower semicontinuity of the total variation we get 
\begin{equation}
\mathrm{TotVar} \ u(\cdot,x) \le \liminf_{\nu \to \infty} \mathrm{TotVar}\ u^\nu(\cdot,x) \stackrel{\eqref{E_wft}}{\le} \liminf_{\nu \to \infty}  \mathrm{TotVar}\ u^\nu_0 =  \mathrm{TotVar} \ u_0 \quad \text{for a.e. $x \in \R$} 
\end{equation}
By using the same approximation argument as in the proof  of Lemma \ref{L_wft} we conclude that the above estimate holds for every $x \in \R$. 
\end{proof}
\subsection{Initial-boundary value problems}
Theorem~\ref{l:tvflux} and Proposition~\ref{p:borraccia2} have several extensions to initial-boundary value problems. Here we only explicitely discuss the extension we need in the proof of Theorem~\ref{t:propbvreg} and of Corollary~\ref{c:stability}. 
\begin{corol} \label{c:stazione}
Fix $f \in C^2 (\R)$,  $\bar u \in BV (]0, T[)$, $u_0 \in BV (]\alpha, \beta[)$. Assume furthermore that
$f' \ge 0$ on the interval $[\min \{ \inf \bar u, \inf u_0 \}, \max \{ \sup \bar u, \sup u_0 \} ]$. 
Let $u$ be the unique entropy admissible solution of the initial-boundary value problem~\eqref{e:cl}, \eqref{e:data2}. Then 
\be \label{e:stazione}
     \mathrm{TotVar} \ u(\cdot, x^\pm) \leq     
   \mathrm{TotVar} \ u_0 +   \mathrm{TotVar} \ \bar u + |u_0 (\alpha^+) - \bar u (0^+)|, \quad \text{for every $x \in ]\alpha, \beta[$}.
\eq
In the above expression, we denote by $u_0 (\alpha^+)$ and $\bar u (0^+)$ the right limit of $u_0$ and $\bar u$ at $x=\alpha$ and $t=0$, respectively.\footnote{These limits exist owing to the $BV$ regularity of $u_0$ and $\bar u$.}
\end{corol}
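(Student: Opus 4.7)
The plan is to extend the wave front-tracking argument used for Proposition~\ref{p:borraccia2} (specifically Lemma~\ref{L_wft}) to the initial-boundary setting, by replacing the Glimm-type functional with one that also accounts for the residual boundary datum and absorbs the ``corner'' mismatch $|u_0(\alpha^+) - \bar u(0^+)|$. The reduction to the Cauchy problem via a spatial extension is not clearly available here because $f'\ge 0$ does not entail finite speed of propagation to the left.

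First I would approximate the data: pick piecewise constant functions $u_0^\nu \in BV(]\alpha,\beta[)$ and $\bar u^\nu \in BV(]0,T[)$ taking values in $2^{-\nu}\mathbb{Z}$, with $u_0^\nu(\alpha^+) \to u_0(\alpha^+)$, $\bar u^\nu(0^+) \to \bar u(0^+)$ and convergent total variations, together with a piecewise linear $f^\nu$ interpolating $f$ at those nodes and satisfying $f^{\nu\prime} \ge 0$ on the relevant range. Let $u^\nu$ be the corresponding entropy admissible IBVP solution. The fronts $x_j(t)$ of $u^\nu$ originate from: (i) jumps of $u_0^\nu$ in $]\alpha,\beta[$; (ii) the corner Riemann problem at $(0,\alpha)$ between $\bar u^\nu(0^+)$ and $u_0^\nu(\alpha^+)$; (iii) jumps of $\bar u^\nu$ at $x=\alpha$; (iv) wave-wave interactions. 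Since $f^{\nu\prime}\ge 0$, all fronts satisfy $dx_j/dt\ge 0$, and a Riemann fan generated by a jump of size $\sigma$ (either (ii) or (iii)) has total variation exactly $\sigma$, because the fronts that resolve it connect the two states monotonically and all travel to the right.

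Then, for $\bar x \in ]\alpha,\beta[$ outside a countable ``bad'' set (so that no interaction or multi-front arrival occurs at $\bar x$), I would introduce the functional
\begin{equation*}
G^\nu_{\bar x}(t) := \mathrm{TotVar}_{]\alpha,\bar x[}\, u^\nu(t,\cdot) + \mathrm{TotVar}_{]0,t[}\, u^\nu(\cdot,\bar x) + |u^\nu(t^+,\bar x) - u^\nu(t^-,\bar x)| + \mathrm{TotVar}_{]t,T[}\, \bar u^\nu,
\end{equation*}
which at $t=0^+$ is controlled by $\mathrm{TotVar}\, u_0^\nu + \mathrm{TotVar}\, \bar u^\nu + |u_0^\nu(\alpha^+) - \bar u^\nu(0^+)|$ (the last contribution coming from the corner fan (ii), which enters $]\alpha,\bar x[$ immediately), and at $t\to T^-$ dominates $\mathrm{TotVar}\, u^\nu(\cdot,\bar x)$. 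Monotonicity of $G^\nu_{\bar x}$ is checked event by event: interior interactions inside $]\alpha,\bar x[$ only decrease the first term (exactly as in Lemma~\ref{L_wft}); a front crossing $\bar x$ transfers its jump from the first to the second term; and the emission at $(\tau,\alpha)$ of a wave fan coming from a jump of $\bar u^\nu$ of size $\sigma$ adds exactly $\sigma$ to the first term while removing $\sigma$ from the last. This yields the desired estimate at the approximation level.

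To conclude, I would pass $\nu\to\infty$ using $L^1$-stability of the entropy admissible IBVP solution with respect to data and flux (in the spirit of Lemma~\ref{l:malva} and classical Kru\v{z}kov theory) and the lower semicontinuity of the total variation under $L^1$-convergence; this gives \eqref{e:stazione} for a.e. $\bar x \in ]\alpha,\beta[$. Then Lemma~\ref{L_trace} upgrades the estimate to the one-sided traces $u(\cdot, x^\pm)$ for every $x \in ]\alpha,\beta[$, exactly as in the final step of the proof of Lemma~\ref{L_wft}. The main obstacle I expect is the bookkeeping of the monotonicity step (c), i.e. verifying that a boundary jump of $\bar u^\nu$ of size $\sigma$ generates an interior wave fan of total variation exactly $\sigma$; this hinges on $f^{\nu\prime}\ge 0$ and on the rightward, monotone character of the emitted fronts, and must be handled carefully in case $f^{\nu\prime}$ vanishes on subintervals (one can perturb $f^\nu$ by $\varepsilon u$ with $\varepsilon\to 0$ to reduce to the strict case).
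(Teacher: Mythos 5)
Your proposal follows essentially the same route as the paper: a wave front-tracking approximation of the initial-boundary value problem, a boundary-aware Glimm-type functional that trades $\mathrm{TotVar}_{]t,T[}\,\bar u^\nu$ for interior variation as waves enter from $x=\alpha$, lower semicontinuity of the total variation in the limit $\nu\to\infty$, and Lemma~\ref{L_trace} to upgrade the a.e.\ estimate to the traces $u(\cdot,x^\pm)$ for every $x$. The one substantive difference is the functional itself: the paper's version carries the additional buffer term $|\bar u^\nu(t^+)-u^\nu(t,\alpha^+)|$, which records the residual mismatch between the boundary datum and the actual trace and is precisely what keeps the functional monotone when $f^{\nu\prime}$ vanishes on some interval and fronts of zero speed park at $x=\alpha$ (so that a later boundary jump can emit a fan of strength larger than that jump alone); you omit this term and instead reduce to the strictly monotone case by perturbing the flux by $\varepsilon u$, which is a legitimate alternative --- indeed the same device the paper uses elsewhere, in the proof of Theorem~\ref{l:tvflux} --- at the cost of one extra limit passage to be justified by $L^1$-stability with respect to the flux.
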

Under the same assumptions as in Corollary~\ref{c:stazione}, the trace at $x = \beta$ of the entropy admissible solution of the initial-boundary value problem~\eqref{e:cl}, \eqref{e:data2} is well defined and we denote it by $u_\beta$.  By using the lower semicontinuity of the total variation with respect to the strong convergence we conclude that 
\be 
\label{e:treno}
         \mathrm{TotVar} \ u_\beta  \leq 
     \mathrm{TotVar} \ u_0 +   \mathrm{TotVar} \ \bar u + |u_0 (\alpha^+) - \bar u (0^+)|. 
\eq
\begin{remark}
Even if we do not explicitely discuss it, Theorem~\ref{l:tvflux} extends to initial-boundary value problems and provides a control of $\mathrm{TotVar} \, w(\cdot, x)$ for every $x \in ]\alpha, \beta[$. As in the case of the Cauchy problem, if $f'$ changes sign we cannot hope for a control on the total variation of $u$: for a counterexample, we refer to the construction detailed in \S\ref{ss:tvblowup}, which also applies to initial-boundary value problems. 
\end{remark}
\begin{proof}[Proof of Corollary~\ref{c:stazione}]
We only provide a sketch of the proof, which is based on the same argument as the proof of Proposition~\ref{p:borraccia2}. The key point is the construction of the wave front-tracking approximation of the initial-boundary value problem~\eqref{e:cl}, \eqref{e:data2}. We fix  
$\nu \in \mathbb N$ and $\bar u^\nu: ]0 , T[ \to 2^{-\nu} \mathbb Z$,  we assign 
the boundary datum 
\be \label{e:verde}
     u^\nu (\cdot, \alpha)= \bar u^\nu 
\eq
and we construct the entropy admissible solution of the initial-boundary value problem~\eqref{e:blu},\eqref{e:rosso},\eqref{e:verde}. To construct the wave front-tracking approximation, the main difference with respect to the 
Cauchy problem is that we have to define the solution of the initial-boundary value problem 
obtained by coupling~\eqref{e:blu} with the data 
\be \label{e:viola}
   u^\nu (\cdot, \alpha) = u_b, \quad u^\nu (0, \cdot) = u_i,
\eq
where $u_b, u_i \in 2^{-\nu} \mathbb Z$ satisfy $(f^\nu)'(u_b^\pm) \ge 0$, 
$(f^\nu)'(u_i^\pm)\ge 0$. We term $z$ the entropy admissible solution of the Cauchy problem 
obtained by coupling~\eqref{e:blu} with the initial datum 
$$
    u^\nu (0, x) :  =
    \left\{ 
            \begin{array}{ll}
                u_b & x < \alpha \\
                u_i  & x >\alpha. \\         
            \end{array}
    \right. 
$$
We claim that the restriction of $z$ to $]\alpha, \beta[$ is the entropy admissible solution of the initial-boundary problem~\eqref{e:blu},\eqref{e:viola}. To see this we have to 
verify~\eqref{e:bln2}. First, we point out that $z$ satisfies the
entropy inequality inside the domain $[0, T[ \times ]\alpha, \beta[$, namely
\be \label{e:rosa}
 \int_0^T \int_\alpha^\beta |z-c| \partial_t \phi + \mathrm{sign} (z-c) [f^\nu(z) - f^\nu(c)] \partial_x \phi dx dt + \int_{\alpha}^{\beta} \phi (0, \cdot) | u_i - c| dx \ge 0
\eq
for every $c \in \R$ and every $\phi \in C^\infty_c (]-\infty, T[ \times ]\alpha, \beta[)$ such that $\phi \ge 0$. Next, we claim that 
\be \label{e:indaco}
      \big[  
       \mathrm{sign} (u_b -c)-
     \mathrm{sign} (z_\alpha-c) \big]
      \big[ f^\nu(z_\alpha) - f^\nu (c)\big] = 0, \quad \text{a.e. on $]0, T[$.}
\eq
To see this, we recall that, since $(f^\nu)'\ge 0$, then $f^\nu(z_\alpha) = f^\nu (u_b)$ and by a case-by-case analysis we arrive at~\eqref{e:indaco}. Next, we fix a parameter $\ee >0$ and set $\eta_\ee: = 1- \omega_\ee$, where $\omega_\ee \in C^\infty (\R)$ is the same family as in~\eqref{e:cremisi2}.
We then fix a test function $\varphi \in C^\infty_c (]-\infty, T[\times ]- \infty, \beta[)$, plug the test function $\phi_\ee (t, x): = \varphi (t, x)\eta_\ee (x)$ into~\eqref{e:rosa} and let $\ee \to 0^+$. By using the fact that 
$$
     \int_0^T \! \! \! \int_\alpha^\beta \mathrm{sign} (z-c) [f^\nu(z) - f^\nu(c)]  
     \eta_\ee' \varphi  dx dt  \! \to \! 
     \int_0^T \mathrm{sign} (z_\alpha-c) [f^\nu(z_\alpha) - f^\nu(c)]  
      \varphi (\cdot, \alpha) dt \quad \text{as $\ee \to 0^+$}
$$
and recalling~\eqref{e:indaco} we arrive at~\eqref{e:bln2}.

Once we have constructed the wave front-tracking approximation, the proof follows the same argument as the proof of Proposition~\ref{p:borraccia2}, with the only difference that the term $G^\nu_{\bar x, 1}$ in~\eqref{e:canarino} should be replaced by
\begin{equation*}
    G^\nu_{\bar x, 3} (t): = \mathrm{TotVar}_{]\alpha,\bar x[} \ u^\nu(t,\cdot) + 
    |\bar u^\nu (t^+) - u^\nu (t, \alpha^+) |+
    \mathrm{TotVar}_{]t, + \infty[} \ \bar u^\nu.
    \qedhere
\end{equation*} 
\end{proof}

\subsection{An example of total variation blow up for $u(\cdot, 0)$} \label{ss:tvblowup}
In this paragraph we exhibit a counterexample showing that, if $f'(u)$ can change sign, then the total variation of the entropy admissible solution $u(\cdot, 0)$ can blow up in finite time, even if the initial data (in the case of the Cauchy problem) or the initial and boundary data (in the case of the initial-boundary value problem) have bounded total variation. 
\subsubsection{Construction roadmap}
We consider the Burgers' equation~\eqref{e:burgers}
and we first provide an heuristic discussion of the basic ideas underpinning the contruction of the counterexample. The key point in the analysis is the construction of a map $\gamma: ]0, T[ \to \R$, $T>0$ to be determined in the following, which exhibits the following features {\color{black}(see Figure~\ref{F_TVu0} for a representation)}:
\begin{itemize}
\item the curve $\gamma$ crosses the vertical axis $x=0$ infinitely many times;
\item  there is a function $u_-: ]0, T[ \to \R$ such that i) $u_-(t) \sim 1$; ii) $u_-(t) $ and $u_+(t) =-1$ satisfy the Rankine-Hugoniot conditions 
\be \label{e:ex:rh}
    u_-(t)^2 - u_+(t)^2 = \gamma' (t) [u_-(t) - u_+(t)] \iff
    \gamma'(t) =  u_-(t) -1. 
\eq
Since in the case of convex fluxes the Lax entropy admissible conditions boil down to the inequality $u_-(t) \ge u_+(t)$, the equality~\eqref{e:ex:rh} dictates that $\gamma$ is an (entropy admissible) shock curve between $u_-(t)$ (on the left) and $-1$ (on the right).  
\end{itemize}
Assume for a moment that $\gamma$ is indeed the shock curve of the solution $u$ of a Cauchy problem, then the total variation of $u(\cdot, 0)$ must blow up: indeed, $u$ is close to $1$ on the left of $\gamma$, and equal to $-1$ on the right. Since $\gamma$ crosses the vertical axis $x=0$ infinitely many times, then $u(\cdot, 0)$ oscillates infinitely many times between a value close to $1$ and the value $-1$, and hence its total variation must blow up. To construct the initial datum $u_0$ of this Cauchy problem, we proceed as follows. First, we recall that, in the subsets of the $(t, x)$ plane where $u$ is regular, $u$ is constant along the characteristic lines, which have speed $2u$. Next, we consider the line $\xi_t$ with slope $2u_-(t)$ and passing through the point $(t, \gamma(t))$ and we define it on the interval $[0, t]$ since we want to focus on backward characteristics. Since $u_-(t) =1 + \gamma'(t)$ owing to~\eqref{e:ex:rh}, then the backward characteristic passing through $(t, \gamma(t)^-)$ is
\be \label{e:ex:backchar}
      \xi_t (s)= 2 [1+  \gamma'(t) ]s + \gamma (t) -2 [1+  \gamma'(t) ]t, \quad 
     s \in [0, t]. 
\eq
By enforcing suitable conditions on $\gamma$, we get that, if $t_1 \neq t_2$, then $\xi_{t_1}$ and $\xi_{t_2}$ do not cross. In this way we can ``pull back" the values of $u_-(t)$ to the initial time and define the initial datum $u_0$ in such a way that $u_0 (\xi_t (0)) = u_- (t)$. We can then easily enforce the condition  $\mathrm{Tot Var} \, u_0 <+ \infty$, and extend the construction to define an initial-boundary value problem. 
\begin{figure}
\centering
\def\svgwidth{0.7\columnwidth}
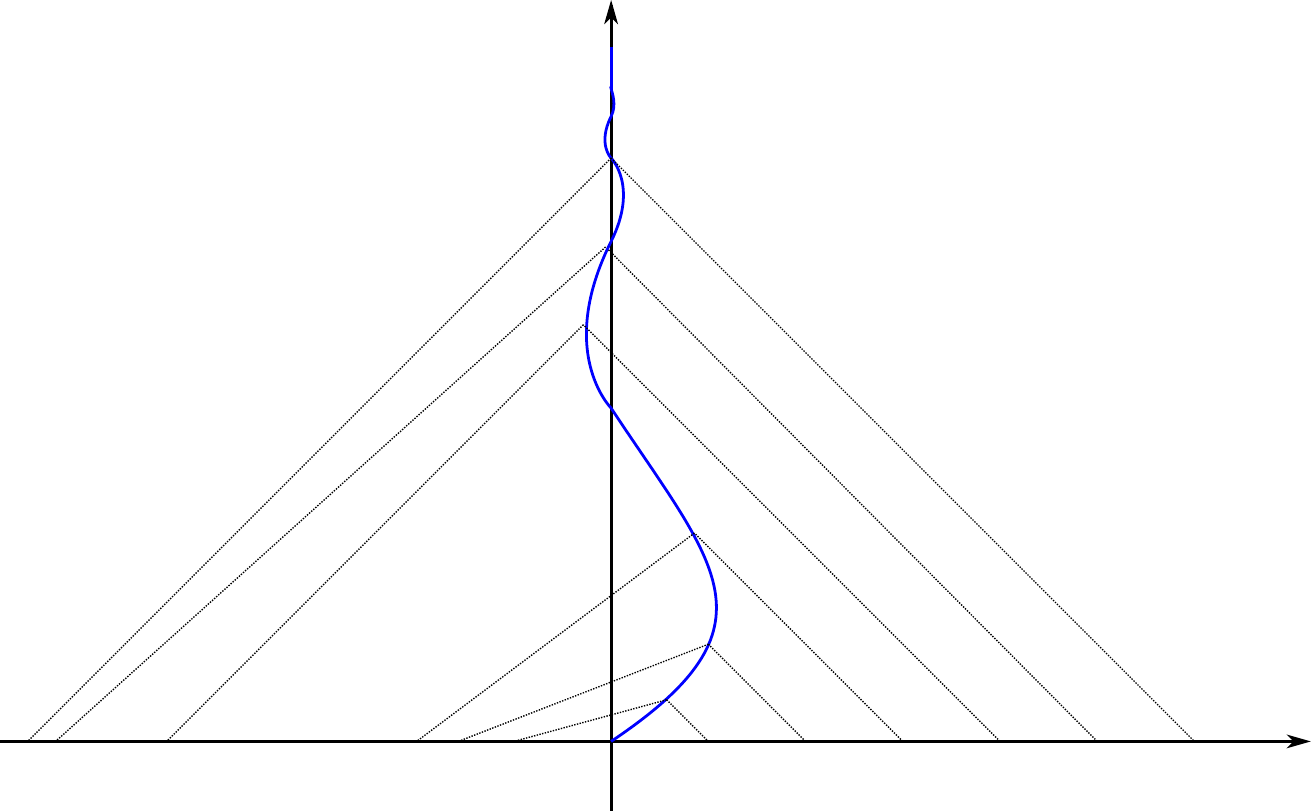
\caption{A solution $u$ of the Burgers equation with $\mathrm{Tot Var}\, u(\cdot,0) = +\infty$.} \label{F_TVu0}. 
\end{figure}
\subsubsection{Technical details}
We now provide the detailed construction of the counterexample, which is achieved in several steps. \\
{\sc Step 1:} construction of the ``building blocks" 
of the curve $\gamma$. The curve $\gamma$ is constructed by alternatively patching together suitably rescaled ``right curved " and ``left curved" blocks. We first construct the {\color{black}right} curved building block. We fix a parameter 
$\ee <1$ and we set  
\be \label{e:ex:rc}
    \hat \gamma_\ee: [0, \ee] \to \R, 
   \qquad \hat \gamma_\ee (t) = \frac{1}{2 \ee} t^3 - \frac{3}{2} t^2 + \ee t.
\eq
Note that 
\be \label{e:ex:dati}
    \hat \gamma_\ee(0) =0, \quad \hat \gamma_\ee (\ee) =0, \quad \hat \gamma_\ee'(0) = \ee, \quad \hat \gamma_\ee'(\ee) =- \frac{\ee}{2}, 
  \quad 
  \hat \gamma_\ee  \ge 0, \quad \hat \gamma_\ee'' \leq 0  \, \text{on $[0, \ee]$.} 
\eq
{\sc Step 2:} analysis of backward characteristics.
We recall~\eqref{e:ex:backchar} and we point out that 
\be \label{e:ex:derflux}
        \frac{\partial \xi_t}{\partial t} (s)= 2 \gamma''(t) [s-t] - \gamma' (t) - 2,
\eq
which yields 
\be \label{e:ex:segno} 
     \gamma''(t) < 0 \implies \frac{\partial \xi_t}{\partial t} (s)<0 \, \text{for $s \in ]s_0(t), t]$, where} \; s_0 (t)= t + \frac{\gamma'(t) + 2}{2 \gamma''(t)}.
\eq
Let us focus on the case $\gamma= \hat \gamma_\ee$: we have 
$\gamma''(t) =0$ if and only if $t = \ee$ and $\gamma''(t)<0$ if $t \in [0, \ee[$. If $t=\ee$, then $\partial \xi_t /\partial t = - \hat \gamma_\ee '(\ee) - 2<0$ for every $s \in \R$. If $t \in [0, \ee[$, then 
$$
   s_0 (t)= t + \frac{\hat \gamma_\ee'(t) + 2}{2 \hat \gamma_\ee''(t)} 
   \leq 
   t + \sup_{t \in [0, \ee[}  
   \frac{\hat \gamma_\ee'(t) + 2}{2 \hat \gamma_\ee''(t)} 
    \stackrel{\hat \gamma''_\ee \leq 0,  \; \hat \gamma_\ee' >-2 }{\leq}
   t+ \frac{ \inf_{t \in [0, \ee[} \hat \gamma_\ee'(t) + 2 }{ \inf_{t \in [0, \ee[}
     2 \hat \gamma_\ee''(t) } \stackrel{\eqref{e:ex:rc}}{\leq}
   t+  \frac{1}{-6}     = t- \frac{1}{6}.
$$
Summing up, we conclude that 
\be 
\label{e:ex:nointerse}
      \frac{\partial \xi_t}{\partial t} <0
       \quad 
      \text{for every $t \in [0, \ee]$ and} \;  s > t - \frac{1}{6}. 
\eq
Finally, we define the {\color{black}left} curved building block as $-\hat \gamma_\ee(t)$. Note that the speed of the minimal backward characteristic through $(t, - \hat \gamma_\ee (t)^-)$ is $2 [1 - \hat \gamma_\ee']$ and, since $\hat \gamma''_\ee \leq 0$, then the backward characteristics do not intersect, namely 
\be 
\label{e:ex:nointerse2}
     \frac{\partial \xi_t}{\partial t} <0 \quad \text{for every $t \in [0, \ee]$ and $s \leq t$}. 
\eq
{\sc Step 3:} we define the shock curve $\gamma$. We set 
\be \label{e:ex:gamma}
    \gamma (t) : = \sum_{n=3}^\infty 
    (-1)^n  \hat \gamma_{\ee_n} (t - \tau_n)\mathbbm{1}_{I_n}(t),
   \quad \ee_n:=2^{-(n+1)},
    \; I_n : = [2^{-3} - 2^{-n}, 2^{-3}- 2^{-(n+1)}[, 
   \; \tau_n: = 2^{-3} - 2^{-n}. 
\eq 
In the above expression, $\mathbbm{1}_{I_n}$ denotes the characteristic function of the interval $I_n$. Note that the interval $I_n$ are disjoints and hence the above series converges since it is locally finite. Note furthermore that $\gamma$ is obtained by patching together infinitely many $C^\infty$ arcs and that at the junction points both the functions and its first derivatives match, hence $\gamma \in C^1 (]0, 2^{-3}[)$. 
We now consider the backward characteristics with final point $(t, \gamma(t)^-)$, we recall~\eqref{e:ex:backchar},~\eqref{e:ex:nointerse} and~\eqref{e:ex:nointerse2}. Since $2^{-3} < 1/6$, we conclude that the map $t \mapsto \xi_t(0)$ is strictly decreasing (and henceforth injective) on $[0, 2^{-3}[$. Owing to~\eqref{e:ex:backchar} and to the fact that $\gamma \in C^1$, it is also continuous, and hence the image of the interval $[0, 2^{-3}[$ is an interval, which we term $]-r, 0]$. The exact expression of the number $r>0$ could be explicitly computed but is not relevant here. We term $\varphi:]-r, 0] \to [0, 2^{-3}[$ the inverse of the map $t \mapsto \xi_t(0)$.  We can now define the initial datum $u_0: \R \to \R$ by setting 
\be \label{e:ex:id}
     u_0(x) : =
     \left\{
     \begin{array}{ll}
               0 & x < -r \\
               1 + \gamma'(\varphi(x)) & - r < x < 0  \\
              -1 &  x>0. \\
     \end{array}
     \right.
\eq
Note that 
\be 
\label{e:ex:tvu0}
     \mathrm{Tot  Var} \, u_0 \leq 5 +   \sum_{n=3}^\infty \int_{I_n} |\gamma'' (t)| dt
    =  5 + \sum_{n=3}^\infty 2^{-(n+2)} < + \infty.
\eq
{\sc Step 4:} we show that the entropy admissible solution of the Cauchy problem obtained by coupling~\eqref{e:burgers} with~\eqref{e:ex:id} satisfies $ \mathrm{Tot  Var}  \ u(\cdot, 0) = + \infty$. The entropy admissible solution $u$ is given by 
\be \label{e:ex:u}
    u(t, x) = 
    \left\{
    \begin{array}{ll}
    0 & x < \lim_{y \to -r^+} \xi_{\varphi(y)} (t) \\
    1 + \gamma' (\varphi(y)) & x = \xi_{\varphi(y)} (t), \; x < \gamma(t) \\
     -1 & x  > \gamma (t). \\
   \end{array}
    \right.
\eq
In other words, $u$ is identically equal to $-1$ for $x> \gamma(t)$, and it is transported along the characteristic lines~\eqref{e:ex:backchar} for $x < \gamma(t)$. Note that by construction the characteristics do not intersect on the set $x < \gamma (t)$. We then get 
\be \label{e:ex:dalbasso}
   \mathrm{TotVar} \ u(\cdot, 0) \ge \sum_{n=3}^\infty |u(\sigma_{n+1}, 0) - u (\sigma_n, 0)|, \quad \sigma_n = 2^{-3} -\frac{3}{2}  2^{-(n+1)}.
\eq
Note that $\sigma_n$ is the middle point of the interval $I_n$, and that $(\sigma_n, 0)$ is a continuity point for $u$. Note furthermore that if $n$ is odd then $u(\sigma_n, 0)= -1$. If $n$ is even, $u(\sigma_n, 0) = 1 + \gamma'(\varphi(x))$, for some $x \in ]-r, 0]$. Since $1 + \gamma'(\varphi(x))>0$, then by using~\eqref{e:ex:dalbasso} we conclude that
$$
 \mathrm{TotVar} \ u(\cdot, 0) \ge \sum_{n=3, \; n \; \text{odd}}^\infty 
   1 = + \infty
$$
and this concludes the analysis of the Cauchy problem. \\
{\sc Step 5:} extension to the initial-boundary value problem. 
By restricting the function $u$ in~\eqref{e:ex:u} to the strip $]-r-1, 1[$
we get a solution of the initial boundary value problem obtained by coupling~\eqref{e:burgers} with the initial datum~\eqref{e:ex:id} and the boundary data $u(t, -r-1) \equiv 0$, $u(t, 1) \equiv -1$.
\section{The multi-path model: distributional formulation, existence and uniqueness results}\label{s:normal}
\subsection{Distributional formulation of the initial-boundary value problem~\eqref{e:te},~\eqref{e:id2} and~\eqref{e:bd2}}
We now complete the definition of distributional solution of the multi-path model. 
We first need some preliminary remarks:  fix $r_k \in L^\infty (]0, T[ \times P_k)$ and assume that $\theta_k \in L^\infty (]0, T[ \times P_k)$ satisfies 
\be \label{e:marzo}
        \int_0^T \! \! \int_\alpha^\beta r_k \theta_k  (\partial_t \phi +  v(r_k)  \partial_x \phi ) 
      dx dt     = 0
    \quad \text{for every $\phi \in C^\infty_c (]0, T[ \times P_k)$}.
\eq
By applying Lemma~\ref{l:normaltrace2} with $b=v (r_k)$ and $]\alpha, \beta[= P_k$  we define the initial value $[r_k \theta_k]_0$ and the distributional trace 
$\mathrm{Tr}[v(r_k)r_k \theta_k] (\cdot, a^+)$, where we recall that $a$ is the starting point of the path $P_k$. Also, assume that $r_k$ is obtained by patching together the $\rho_i$-s as in~\eqref{e:patches} and that each $\rho_i$ is an entropy admissible solution of~\eqref{e:foglio}, that is it satisfies~\eqref{e:gennaio} with $f(\rho_i) = v(\rho_i) \rho_i$. Then owing to Theorem~\ref{l:verde} the trace $v(r_k) r_k(\cdot, a^+)$ is attained as a strong limit in the $L^1$ topology. 
\begin{definition}
\label{d:te} Assume that $r_k$ satisfies~\eqref{e:patches}, where each $\rho_i$ is an entropy admissible solution of~\eqref{e:foglio},~\eqref{e:id1}.  We term $\theta_k \in  L^\infty (\R_+ \times P_k)$ a distributional solution of the initial-boundary value problem~\eqref{e:te},~\eqref{e:id2} and~\eqref{e:bd2} if $\theta_k$ satisfies~\eqref{e:dsoltheta} and furthermore 
\be \label{e:sasso}
       \mathrm{Tr} [ v(r_k) r_k \theta_k](\cdot, a^+) = \bar \theta_k  v(r_k)r_k (\cdot, a^+), \qquad [r_k \theta_k]_0 = \theta_{k0} \rho_{i0} \; \text{on $I_i$, for every $i$ such that $I_i
\subseteq P_k$} .
\eq
\end{definition}
We have 
\begin{lemma}
\label{l:iff}
 Assume that $r_k$ satisfies~\eqref{e:patches}, where each $\rho_i$ is an entropy admissible solution of~\eqref{e:foglio},~\eqref{e:id1}. Assume furthermore that  $\theta_k \in L^\infty (]0, T[ \times P_k)$ is a distributional solution of~\eqref{e:te}, \eqref{e:id2},~\eqref{e:bd2}. Under~\eqref{e:sommaunoid},~\eqref{e:sommaunobd}, the following conditions are equivalent:
\begin{itemize}
\item[i)] equation~\eqref{e:sum1} is satisfied a.e. on $]0, T[ \times I_i$, for every $i$;
\item[ii)] we have the equality 
\be \label{e:fragola}
     g( \rho_{j+1}) (\cdot, d^+) \stackrel{\eqref{e:f}}{=} v(\rho_{j+1}) \rho_{j+1} (\cdot, d^+) =
      \sum_{k: I_{j+1} \subseteq P_k }\mathrm{Tr} [v(\rho_{j}) \rho_j \theta_k](\cdot, d^-)
\eq
at every junction point $d$.  In the above expression $d$ is the final point of the road $I_j$ entering the junction and the starting point of the road $I_{j+1}$ exiting the junction. 
\end{itemize}
\end{lemma}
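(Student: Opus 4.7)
The strategy is to introduce the auxiliary quantity
\[
    \psi_i := \rho_i(\sigma_i - 1), \qquad \sigma_i := \sum_{k: I_i \subseteq P_k} \theta_k,
\]
defined on each road $I_i$, and to reduce the equivalence of (i) and (ii) to a statement about $\psi_i$. First I would observe that on every $I_i$, by summing the distributional equation \eqref{e:dsoltheta} for $r_k\theta_k$ over the indices $k$ such that $I_i \subseteq P_k$ and using \eqref{e:patches}, one obtains
\[
     \partial_t(\rho_i\sigma_i) + \partial_x[v(\rho_i)\rho_i\sigma_i] = 0 \quad\text{on } ]0,T[\times I_i.
\]
Subtracting \eqref{e:foglio} then gives $\partial_t \psi_i + \partial_x[v(\rho_i)\psi_i]=0$ in the interior, so by Lemma~\ref{l:normaltrace2} the quantities $[\rho_i(\sigma_i-1)]_0$ and the distributional traces $\mathrm{Tr}[v(\rho_i)\psi_i](\cdot,\cdot^\pm)$ at every endpoint of $I_i$ are well defined. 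Note also that $(i) \Leftrightarrow \psi_i \equiv 0$ a.e. on every $]0,T[\times I_i$.

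Next I would translate the two sides of the equivalence into statements about $\psi_i$. The initial condition part of \eqref{e:sasso} together with \eqref{e:sommaunoid} yields $[\rho_i(\sigma_i-1)]_0 = 0$ on every $I_i$. At the source $a$, summing \eqref{e:sasso} over the $m$ paths and using $\sum_k\bar\theta_k = 1$ from \eqref{e:sommaunobd}, together with Lemma~\ref{l:maggio} for $\rho_1$, shows that $\mathrm{Tr}[v(\rho_1)\psi_1](\cdot,a^+) = 0$. For a junction point $d$ separating an incoming road $I_j$ from an outgoing road $I_{j+1}$, Lemma~\ref{l:orecchie} applied to the distributional equation \eqref{e:dsoltheta} on $P_k$ yields, for each $k$ with $I_{j+1}\subseteq P_k$,
\[
  \mathrm{Tr}[v(\rho_{j+1})\rho_{j+1}\theta_k](\cdot,d^+) = -\mathrm{Tr}[v(\rho_j)\rho_j\theta_k](\cdot,d^-).
\]
Summing over those $k$ and using linearity of the distributional trace together with Lemma~\ref{l:maggio} applied on $I_{j+1}$, one obtains the identity
\[
    \mathrm{Tr}[v(\rho_{j+1})\psi_{j+1}](\cdot,d^+) \;=\; -g(\rho_{j+1})(\cdot,d^+) + \sum_{k:\, I_{j+1}\subseteq P_k}\mathrm{Tr}[v(\rho_j)\rho_j\theta_k](\cdot,d^-).
\]
This is the crucial bridge: the right-hand side vanishes exactly when \eqref{e:fragola} holds.

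From this identity, direction $(i)\Rightarrow(ii)$ is immediate: if $\psi_{j+1}\equiv 0$ then $\mathrm{Tr}[v(\rho_{j+1})\psi_{j+1}](\cdot,d^+)=0$, and the displayed identity reduces to \eqref{e:fragola}. For the converse $(ii)\Rightarrow(i)$ I would reason road by road: assuming \eqref{e:fragola}, the above identity shows that at the inflow endpoint of every road $I_i$ the distributional trace $\mathrm{Tr}[v(\rho_i)\psi_i](\cdot,\cdot^+)$ vanishes (the source case having been treated separately). I would then invoke the uniqueness part of Theorem~\ref{t:luglio} on each road $I_i$ with the nearly incompressible vector field $b=v(\rho_i)$ and density $\rho_i$: both $\theta=\sigma_i-1$ and $\theta\equiv 0$ are distributional solutions of \eqref{e:luglio} sharing the zero initial datum (for the product $\rho_i\theta$) and the zero boundary trace at the inflow point. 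The uniqueness statement then forces $\rho_i(\sigma_i-1)=0$ a.e., which is (i).

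The main obstacle is the converse direction $(ii)\Rightarrow(i)$, since it requires carefully casting the condition $\mathrm{Tr}[v(\rho_i)\psi_i](\cdot,\cdot^+)=0$ as the boundary condition \eqref{e:luglio2} of the nearly incompressible framework of~\cite{DovettaMarconiSpinolo} and then applying Theorem~\ref{t:luglio}; in particular, one must check that the two candidate solutions $\sigma_i-1$ and $0$ really satisfy the same boundary datum in the sense of \eqref{e:luglio2}, which is where the trace identities derived above play an essential role. The direction $(i)\Rightarrow(ii)$, by contrast, is essentially algebraic once the key trace identity has been set up.
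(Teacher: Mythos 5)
Your proof is correct and follows essentially the same route as the paper's: the same trace identities (Lemma~\ref{l:maggio} and Lemma~\ref{l:orecchie}) provide the bridge at each junction, and the converse is settled road by road via the uniqueness part of Theorem~\ref{t:luglio}, your comparison of $\psi_i=\rho_i(\sigma_i-1)$ with $0$ being the paper's comparison of $\sigma_i$ with the constant $1$ after an affine shift. The only blemish is a harmless overall sign flip in your displayed identity for $\mathrm{Tr}[v(\rho_{j+1})\psi_{j+1}](\cdot,d^+)$, whose right-hand side should read $g(\rho_{j+1})(\cdot,d^+)-\sum_{k:\, I_{j+1}\subseteq P_k}\mathrm{Tr}[v(\rho_j)\rho_j\theta_k](\cdot,d^-)$; this does not affect the characterization of when the trace vanishes, hence the argument stands.
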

\begin{proof}[Proof of Lemma~\ref{l:iff}] 
{\sc Step 1:} we establish the implication i)$\implies$ii). We apply~\eqref{e:sum1} on the road $I_{j+1}$ and obtain the second  of the following equalities: 
\begin{equation*} 
\begin{split}
     v(\rho_{j+1}) \rho_{j+1} (\cdot, d^+) &
    \stackrel{\eqref{e:maggio}}{=}  -
    \mathrm{Tr}[v(\rho_{j+1}) \rho_{j+1} ](\cdot, d^+) \stackrel{\eqref{e:sum1}}{=}
    - \sum_{k: I_{j+1} \subseteq P_k }\mathrm{Tr} [v(r_k) r_k \theta_k](\cdot, d^+) \\
    &
   \stackrel{\eqref{e:mirtillo}}{=}
     \sum_{k: I_{j+1} \subseteq P_k }\mathrm{Tr} [v(r_k) r_k \theta_k](\cdot, d^-) 
\end{split}
\end{equation*}
and owing to~\eqref{e:patches} this yields~\eqref{e:fragola}. \\
{\sc Step 2:} we establish the implication ii)$\implies$i). We argue by induction. First, we show that~\eqref{e:sum1} holds on $]0, T[ \times I_1$.
To this end, we set $z: = \sum_{k=1}^m \theta_k$ and we point out that, owing to~\eqref{e:sommaunoid} and~\eqref{e:sommaunobd} and to the linearity of the equation for $\theta_k$, $z$ is a solution of the initial-boundary value problem
\be \label{e:lampone}
    \left\{
    \begin{array}{ll}
             \partial_t [ \rho_1 z ] + \partial_x [v(\rho_1) \rho_1 z] =0 \\
             z (0, \cdot) =1, \qquad z(\cdot, a) = 1,
    \end{array} 
    \right.
\eq 
where we recall that $a$ is the initial point of the interval $I_1$. Owing to~\eqref{e:foglio}, $z \equiv 1$ is a solution of the above initial-boundary value problem. We can then apply Theorem~\ref{t:luglio} with $]\alpha, \beta[=I_1$, $b = v(\rho_1)$ and by the uniqueness part we conclude that~\eqref{e:sum1} holds true on $]0, T[ \times I_1$.

Next, we fix $i=2, \dots, n_k$, assume that ~\eqref{e:sum1} holds true on $]0, T[ \times I_j$ for $j=1, \dots, i-1$ and, under~\eqref{e:fragola}, show that it holds true on $]0, T[ \times I_i$. To this end, we term $d$ the junction point between the road $I_{i-1}$ and the road $I_i$, that is $d$ is the final point of the road $I_{i-1}$ and the initial point of the road $I_i$.  We recall~\eqref{e:patches} and point out that 
\begin{equation*}\begin{split}
            \mathrm{Tr} 
           \left[  \sum_{k: I_i \subseteq P_k} v(\rho_i) \rho_i \theta_k  \right]    (\cdot, d^+) &=
 \sum_{k: I_i \subseteq P_k} \mathrm{Tr}[v(\rho_i) \rho_i \theta_k] (\cdot, d^+) \stackrel{\eqref{e:mirtillo}}{=} - 
     \sum_{k: I_i \subseteq P_k} \mathrm{Tr}[v(\rho_{i-1}) \rho_{i-1} \theta_k] (\cdot, d^-) \\ &
     \stackrel{\eqref{e:maggio},\eqref{e:fragola}}{=}
     \mathrm{Tr}[v(\rho_{i}) \rho_{i} ](\cdot, d^+) 
\end{split}
\end{equation*}
and by recalling~\cite[Definition 2.7]{DovettaMarconiSpinolo} this implies that the function $
\sum_{k: I_i \subseteq P_k} v(\rho_i) \rho_i \theta_k $ attains the boundary condition $1$. We can then repeat the same argument as before on the interval $I_i$ and conclude that~\eqref{e:sum1} holds true on $]0, T[ \times I_i$.
\end{proof}
\subsection{Proof of Theorem~\ref{t:exunisd}}
We first establish existence, next uniqueness.  
\subsubsection{Existence} \label{sss:exsd}
We fix a path $P_k$ and we term $I_1, \dots, I_{n_k}$ the consecutive roads composing the path $P_k$. We now construct the solutions $\rho_1, \dots, \rho_{n_k}$ and $\theta_k$. We argue inductively: first, we construct the solution on $I_1$. Next, we assume that we have constructed a solution on the road $I_1, \dots, I_j$ and we construct it on $I_{j+1}$. \\
{\sc Construction of the solution on $I_1$.} To construct $\rho_1$, we apply Proposition~\ref{p:datainc} with $\bar u= \bar \rho$, $u_0 = \rho_{10}$ and $f(u):= u v(u)$. We conclude that there is an entropy admissible solution of~\eqref{e:foglio}, \eqref{e:id1}, \eqref{e:bd1} such that $0 \leq \rho \leq \rho^\ast$ and we recall that, owing to~\eqref{e:f}, the point $\rho^\ast$ is the point where the function $u \mapsto v(u) u$ attains its maximum. Next, we apply Theorem~\ref{t:luglio} with $]\alpha, \beta[=I_1$ and $b= v(\rho_1)$ and conclude that there is a solution of~\eqref{e:te}, \eqref{e:id2}, \eqref{e:bd2} defined on $I_1$. Since $0 \leq \bar \theta_k \leq 1$ and $0 \leq \theta_{k0} \leq 1$, then by the comparison principle given in Theorem~\ref{t:luglio}  
we get~\eqref{e:zerouno} on $]0, T[ \times I_1$.  To conclude the existence proof on $I_1$, we are left to establish~\eqref{e:sum1} on $]0, T[ \times I_1$. To this end, we can argue as in {\sc Step 2} of the proof of Lemma~\ref{l:iff}. \\
{\sc Inductive step.} We assume that we have constructed the solution on $I_1, \dots, I_j$ and we construct it on $I_{j+1}$. More precisely, we assume that we have constructed the functions $\rho_1, \dots, \rho_j$ and the function $\theta_k$ on $I_1, \dots, I_j$. We also assume that $0 \leq \rho_i \leq \rho^\ast$, for every $i=1, \dots, j$, and that~\eqref{e:zerouno} and~\eqref{e:sum1} are both satisfied on $]0, T[ \times I_i$, for every $i=1, \dots, j$. 
We term $d$ the junction point, that is $d$ is the final point of the road $I_j$ and the initial point of the road $I_{j+1}$. We proceed according to the following steps. \\
{\sc Step 1:} we show that 
\be 
\label{e:rangeok} 
      0 \leq \sum_{k: I_{j+1} \subseteq P_k} \mathrm{Tr}[v (\rho_{j}) \rho_j \theta_k] (\cdot, d^-) \leq v(\rho^\ast) \rho^\ast \quad \text{a.e. on $]0, T[$}. 
\eq
To establish~\eqref{e:rangeok} we recall that, by the inductive assumption,~\eqref{e:zerouno} is satisfied on $]0, T[ \times I_j$. Also, owing to the specific structure of the network, 
$$
    \{ k: I_{j+1} \subseteq P_k \} \subseteq \{ k: I_{j} \subseteq P_k \}
$$
and owing to the inequality $v \ge 0$ this yields 
$$
    0 \leq v(\rho_j) \rho_j  \sum_{k: I_{j+1} \subseteq P_k}   \theta_k
     \leq v(\rho_j) \rho_j \sum_{k: I_{j} \subseteq P_k}   \theta_k \stackrel{\eqref{e:sum1}}{=} v(\rho_j) \rho_j 
    \stackrel{\eqref{e:f}}{\leq}    v(\rho^\ast) \rho^\ast.
$$ 
By a small modification of the proof of~\cite[Lemma 6.1]{DovettaMarconiSpinolo} one can show that the above inequalities yield~\eqref{e:rangeok}. \\
{\sc Step 2:}  we construct the function $\rho_{j+1}$. We combine~\eqref{e:f} and~\eqref{e:rangeok} and we conclude that there is a unique function $\bar \rho_{j+1} \in L^\infty (]0, T[)$ such that 
\be \label{e:barrho}
    0 \leq \bar \rho_{j+1} \leq \rho^\ast, \qquad 
    v(\bar \rho_{j+1}) \bar \rho_{j+1} \stackrel{\eqref{e:f}}{=} g(\bar \rho_{j+1}) =  \sum_{k: I_{j+1} \subseteq P_k} \mathrm{Tr}[v (\rho_{j}) \rho_j \theta_k] (\cdot, d^-), \quad 
    \text{a.e. on $]0, T[$.}
\eq
Next, we apply Proposition~\ref{p:datainc} with $]\alpha, \beta[=I_{j+1}$, 
$\bar u = \bar \rho_{j+1}$ and we term $\rho_{j+1}$ the entropy admissible solution such that $0 \leq \rho_{j+1} \leq \rho^\ast$.
\\
{\sc Step 3:} we define the function $\theta_k$ on $I_{j+1}$. By combining the fact that~\eqref{e:zerouno} is satisfied on $I_j$ with the inequality $v \ge 0$ and recalling that by assumption $I_{j+1} \subseteq P_k$ we get 
$$
    0 \leq v(\rho_j) \rho_j \theta_k \leq v(\rho_j) \rho_j  \sum_{k: I_{j+1} \subseteq P_k}   \theta_k
$$
and again by a small modification of the proof of~\cite[Lemma 6.1]{DovettaMarconiSpinolo} this implies 
\be \label{e:mela}
    0 \leq  \mathrm{Tr} [v(\rho_j) \rho_j \theta_k] (\cdot, d^-)   
  \leq  \sum_{k: I_{j+1} \subseteq P_k} \mathrm{Tr}[v (\rho_{j}) \rho_j \theta_k] (\cdot, d^-)
     \stackrel{\eqref{e:barrho}}{=} g(\bar \rho_{j+1}) . 
\eq
We now set 
\be \label{e:thetakb}
    \theta_{kb} : = 
    \left\{ 
      \begin{array}{ll}
                  \displaystyle{
         \frac{ \mathrm{Tr} [v(\rho_j) \rho_j \theta_k] (\cdot, d^-)  }{ g(\bar \rho_{j+1})}
         }
     & \text{if $ g( \bar \rho_{j+1}) \neq 0$} \\
     0 & \text{if $g(\bar \rho_{j+1}) =0$\,.} \\
    \end{array}
    \right.
\eq
Note that, owing to~\eqref{e:mela}, $0 \leq \theta_{kb} \leq 1$. 
To define $\theta_k$ on $I_{j+1}$ we apply Theorem~\ref{t:luglio}
with $]\alpha, \beta[= I_{j+1}$, $b= v(\rho_{j+1})$, $\rho= \rho_{j+1}$ and $\bar \theta= \theta_{kb}$, and we term $\theta_k$ the solution of~\eqref{e:luglio}. By applying the comparison principle, we get that~\eqref{e:zerouno} is satisfied on $I_{j+1}$. The equality~\eqref{e:sum1} is established in {\sc Step 5}. \\
{\sc Step 4:} we show that $\theta_k$ is a solution of~\eqref{e:te} on $I_1 \cup I_2 \cup \cdots \cup I_{j+1}.$ First, we point out that
\be  \label{e:arancia}
    \mathrm{Tr}[v(\rho_{j+1}) \rho_{j+1}] (\cdot, d^+) \stackrel{\eqref{e:f},\eqref{e:maggio}}{=} - g(\rho_{j+1}) (\cdot, d^+)\stackrel{\text{Proposition}~\ref{p:datumclassic}}{=} - g (\bar \rho_{j+1}) . 
\eq
Next, we set $P_{k, j}: = I_1 \cup I_2 \cup \cdots \cup I_{j}$ and $P_{k, j+1}: =
P_{k, j} \cup I_{j+1}$. We recall that by the inductive assumption $\theta_k$ is a solution on $P_{k, j}$, which implies that 
\be \label{e:uva}
    \int_0^T \! \! \int_{P_{k,j}}  \! \! \! 
    r_k \theta_k ( \partial_t \phi + v(r_k) \partial_x \phi) dx dt = \int_0^T \mathrm{Tr}[v(\rho_j) \rho_j \theta_k ] (t, d^-) \phi (t, d) dt, 
    \quad \text{for every $\phi \in C^\infty_c (]0, T[ \times P_{k,j+1})$},
\eq 
where we recall that $r_k$ is obtained by patching together $\rho_1, \dots, \rho_{j}$, see~\eqref{e:patches}. 
On the other hand, since by definition $\theta_k$ is a solution of the initial-boundary value problem on $I_{j+1}$, then
\begin{equation} \label{e:kiwi}
\begin{split}
    \int_0^T \! \! \int_{I_{j+1}}  \! \! \! &
    \rho_{j+1} \theta_k ( \partial_t \phi + v(\rho_{j+1}) \partial_x \phi) dx dt = \int_0^T \mathrm{Tr}[v(\rho_{j+1}) \rho_{j+1}\theta_k ] (t, d^+) \phi (t, d) dt \\ & 
    =  \int_0^T \mathrm{Tr}[v(\rho_{j+i}) \rho_{j+1} ] (t, d^+)  \theta_{kb} 
        \phi (t, d) dt
   \stackrel{\eqref{e:thetakb},\eqref{e:arancia}}{=}
       - \int_0^T
       \mathrm{Tr} [v(\rho_j) \rho_j \theta_k] (t, d^-) 
       \phi (t, d) dt 
\end{split}
\end{equation}
for every $\phi \in C^\infty_c (]0, T[ \times P_{k, j+1})$. 
This implies that 
\begin{equation*} 
\begin{split}
    \int_0^T \! \! \int_{P_{k, j+1}}  \! \! \! &
    r_k \theta_k ( \partial_t \phi + v(r_k) \partial_x \phi) dx dt 
   =  \int_0^T \! \! \int_{P_{k, j}}  \! \! \! 
    r_k \theta_k ( \partial_t \phi + v(r_k) \partial_x \phi) dx dt \\
   & \quad 
+ \int_0^T \! \! \int_{I_{j+1}}  \! \! \!
    \rho_{j+1} \theta_k ( \partial_t \phi + v(\rho_{j+1}) \partial_x \phi) dx dt
     \stackrel{\eqref{e:uva},\eqref{e:kiwi}}{=}0, 
      \quad \text{for every $\phi \in C^\infty_c (]0, T[ \times P_{k,j+1})$},
\end{split}
\end{equation*}
that is $\theta_k$ is a solution of~\eqref{e:te} on $P_{k, j+1} = I_1 \cup I_2 \cup \dots I_{j+1}$. \\
{\sc Step 5:} we establish~\eqref{e:sum1}. Note that at the junction point $d$ between the road $I_j$ and the road $I_{j+1}$ we have 
$$
     \mathrm{Tr}[v(\rho_{j+1}) \rho_{j+1}] (\cdot, d^+)  \stackrel{\eqref{e:arancia}}{=} - g (\bar \rho_{j+1})
    \stackrel{\eqref{e:barrho}}{=} 
     - \sum_{k: I_{j+1} \subseteq P_k} \mathrm{Tr}[v (\rho_{j}) \rho_j \theta_k] (\cdot, d^-)
$$
and owing to Lemma~\ref{l:iff} this yields~\eqref{e:sum1}.
\subsubsection{Uniqueness}
We now establish the uniqueness part in the statement of Theorem~\ref{t:exunisd}. We fix a path $P_k$, term $I_1, \dots, I_{n_k}$ the consecutive roads composing $P_k$ and assume that there are two solutions $\rho_1, \dots, \rho_{n_k}, \theta_k$ and $\rho^\Diamond_1, \dots, \rho^\Diamond_{n_k}, \theta_k^\Diamond$. We want to show that $\rho_1 = \rho_1^\Diamond$ a.e. on $]0, T[ \times I_1$, $\dots, \rho_{n_k} = \rho_{n_k}^\Diamond$ a.e. on $]0, T[ \times I_{n_k}$ and that $\rho_i \theta_k = \rho_i^\Diamond \theta_k^\Diamond$ a.e. on $]0, T[ \times I_i$, for every $i=1, \dots, n_k$. We argue inductively and proceed according to the following steps. \\
{\sc Step 1:} we establish the identities $\rho_1= \rho_1^\Diamond$ and $\theta_k= \theta_k^\Diamond$ on $]0, T[ \times I_1$. Since $\rho_1$ and $\rho_1^\Diamond$ are both entropy admissible solutions of the initial-boundary value problem
~\eqref{e:foglio}, \eqref{e:id1}, \eqref{e:bd1} such that $0 \leq \rho_1, \rho^\Diamond_1 \leq \rho^\ast$, the identity $\rho_1 = \rho_1^\Diamond$ follows from the uniqueness part of Proposition~\ref{p:datainc}. Next, we recall that $\theta_k$ and $\theta_k^\Diamond$ are both solutions of the initial-boundary value problem
$$
   \left\{
   \begin{array}{ll}
     \partial_t [\rho_1 \theta] + \partial_x [v (\rho_1) \rho_1 \theta] =0 \\
    \theta (0, \cdot) = \theta_{k0}, \qquad \theta (\cdot, a) = 
    \bar \theta_k
   \end{array}
  \right.
$$
and hence the identity $\rho_1 \theta_k = \rho_1 \theta_k^\Diamond$
follows from the uniqueness part in Theorem~\ref{t:luglio}. \\
{\sc Step 2:} we assume that $\rho_i = \rho_i^\Diamond$ and $\rho_i \theta_k= \rho_i \theta_k^\Diamond$ on $]0, T[ \times I_i$, for every $i=1, \dots, j$ and we establish the identities $\rho_{j+1} = \rho_{j+1}^\Diamond$ and $\rho_{j+1} \theta_k = \rho_{j+1} \theta_k^\Diamond$ a.e. on $]0, T[ \times I_{j+1}$.  We term $d$ the junction point between $I_j$ and $I_{j+1}$. We recall that by assumption both $\theta_k$ and $\theta_k^\Diamond$ satisfy~\eqref{e:sum1} on $]0, T[ \times I_i$, for every $i=1, \dots, n_k$. We apply Lemma~\ref{l:iff}, recall that $\rho_j = \rho_j^\Diamond$, $\rho_j \theta_k = \rho_j \theta_k^\Diamond$ on $]0, T[ \times I_j$ and from~\eqref{e:fragola} we deduce that 
$
    \mathrm{Tr}[v(\rho_{j+1}) \rho_{j+1}] (\cdot, d^+)
    = 
    \mathrm{Tr}[v(\rho_{j+1}^\Diamond) \rho_{j+1}^\Diamond] (\cdot, d^+)
$
a.e. on $]0, T[$. Owing to the first equality in~\eqref{e:arancia}, this yields the identity $g(\rho_{j+1})(\cdot, d^+) = g(\rho^\Diamond_{j+1})(\cdot, d^+)$ a.e. on $]0, T[$, where $g$ is the same as in~\eqref{e:f} and the traces $g(\rho_{j+1})(\cdot, d^+)$ and $g(\rho^\Diamond_{j+1})(\cdot, d^+)$ are attained in the sense of~\eqref{e:febbraio}. Since by assumption $\rho_{j+1}, \rho^\Diamond_{j+1} \leq \rho^\ast$, then owing to Lemma~\ref{l:lanacaprina}, this implies that $\rho_{j+1} = \rho_{j+1}^\Diamond$ a.e. on $]0, T[ \times I_{j+1}$. Next, we recall that $\rho_j \theta_k= \rho_j \theta_k^\Diamond$ a.e. on $]0, T[\times I_{j}$, we apply~\eqref{e:mirtillo} and recall~\eqref{e:luglio2} and we conclude that $\theta_k$ and $\theta^\Diamond_{k}$ are both solutions 
of the initial-boundary value problem
$$
   \left\{
   \begin{array}{ll}
        \partial_t [\rho_{j+1} \theta] +
       \partial_x [v (\rho_{j+1}) \rho_{j+1} \theta] =0 \\
      \theta (0, \cdot) = \theta_{k0}, \quad 
      \theta(\cdot, d) = \theta_{kb} 
   \end{array}
   \right.
$$
where $\theta_{kb}$ is the same as in~\eqref{e:thetakb} with $\bar \rho_{j+1}$ replaced by $g(\rho_{j+1}) (\cdot, d^+)$. By the uniqueness part in the statement of Theorem~\ref{t:luglio} we conclude that $\rho_{j+1} \theta_k = \rho_{j+1} \theta^\Diamond_k$ a.e. on $]0, T[ \times I_{j+1}$.
\begin{remark}
\label{r:balcone}
Definition~\ref{d:adsol}, Definition~\ref{d:te} and Lemma~\ref{l:iff} extend to the case of more general networks than those considered in the present paper, i.e. networks containing other types of junctions than T-junctions. By arguing as in the proof of Theorem~\ref{t:exunisd} one can show that, if the density functions $\rho_1, \dots, \rho_h$ are assigned, then one can 
construct the functions $\theta_1, \dots, \theta_m$ satisfying Definition~\ref{d:te} and these functions are unique in the sense of~\eqref{e:giugno}. What is missing in the general case is that nothing guarantees that condition~\eqref{e:sum1}, or equivalently the junction condition~\eqref{e:fragola}, is satisfied. 
\end{remark}

\section{Propagation of regularity and stability for the source-destination model} \label{ss:propbv}
\subsection{Proof of Theorem~\ref{t:propbvreg}} 
By the uniqueness part in Theorem~\ref{t:exunisd} it suffices to show that the solution of the distributional source-destination model constructed in \S\ref{sss:exsd}  satisfies Theorem~\ref{t:propbvreg}. In particular, in the proof we show that, under the 
assumptions of Theorem~\ref{t:propbvreg}, $r_k$ is bounded away from $0$ for every $k=1, \dots, m$ and hence the function $\theta_k$ is uniquely determined.   
We fix $k=1, \dots, m$, consider the path $P_k$ and as in \S\ref{sss:exsd} term $I_1, \dots, I_{n_k}$ the consecutive roads composing $P_k$.  \\
{\sc Step 1:} we establish the regularity estimates on $\rho_1$. We recall that $\rho_1$ is obtained by applying Proposition~\ref{p:datainc} and by recalling~\eqref{e:giallo} 
we arrive at 
\be   \label{e:autobus}
     \mathrm{TotVar} \ \rho_1 (t, \cdot) \leq \mathrm{TotVar} \ \bar \rho + \mathrm{TotVar} \ \rho_{10} + |\bar \rho(0^+)- \rho_{10} (a^+)|, \quad \text{for every $t \in ]0, T[$},
\eq 
where we have used Lemma~\ref{l:dafermos} to define the function $\rho_1 (t, \cdot)$ for every $t$. By applying the chain rule for $BV$ functions (see for instance~\cite[Theorem 3.96]{AmbrosioFuscoPallara})  and using equation~\eqref{e:foglio}, we deduce from~\eqref{e:autobus} a control on the total variation of the measure $\partial_t \rho_1$ on $]0, T[ \times ]\alpha, \beta[$ and conclude that $\rho_1 \in BV (]0, T[ \times ]\alpha, \beta[)$. Next, we recall the assumptions on the data and~\eqref{e:maxprincl} and we conclude that
\be \label{e:monopattino} 
     0 < \ee \leq \rho_1 \leq \rho^\ast - \ee, \quad \text{a.e. on $]0, T[ \times I_1$}. 
\eq
We term $d$ the second extremum of $I_1$, we recall that $g$ is given by~\eqref{e:f}, we apply~\eqref{e:treno} and by using the chain rule for $BV$ functions we conclude that $ g(\rho_1)(\cdot, d^-) \in BV (]0, T[)$. Owing to~\eqref{e:monopattino}, this yields 
\be \label{e:nave}
  0 \stackrel{\eqref{e:f}}{<} v(\ee) \ee \leq 
  g(\rho_1)(\cdot, d^-) \leq  v (\rho^\ast - \ee) [\rho^\ast - \ee] .
\eq 
{\sc Step 2:} we establish the regularity estimates for $\theta_k$ on $I_1$. We apply~\cite[Proposition 1.4]{DovettaMarconiSpinolo} with $]\alpha, \beta[= I_1$ and $b = v(\rho_1)$ and we conclude that $\theta_k \in BV (]0, T[ \times I_1)$. Also, owing to~\cite[Theorem 1.5]{DovettaMarconiSpinolo}, there is $\tilde \theta_k \in BV (]0, T[)$ such that 
\be \label{e:aereo}
     \mathrm{Tr} [v(\rho_1) \rho_1\theta_k] (\cdot, d^-) = \tilde \theta_k 
      \mathrm{Tr} [v(\rho_1) \rho_1] (\cdot, d^-)
    \stackrel{\eqref{e:maggio}}{=} 
    \tilde \theta_k 
     g(\rho_1) (\cdot, d^-)\quad 
    \text{a.e. on $]0, T[$}
\eq
and that 
\be \label{e:bicicletta}
    \ee \leq \tilde \theta_k \leq 1 \quad \text{a.e. on $]0, T[$}.
\eq
{\sc Step 3:} we deal with the junction $d$. We have 
\be \label{e:penna}
        g(\rho_2) (\cdot, d^+) \stackrel{\eqref{e:maggio}}{=}
     -  \mathrm{Tr} [v(\rho_2) \rho_2] (\cdot, d^+) \stackrel{\eqref{e:fragola}}{=}
     \sum_{k: I_2 \subseteq P_k}  \mathrm{Tr} [v(\rho_1) \rho_1 \theta_k] (\cdot, d^-)
     \stackrel{\eqref{e:aereo}}{=} g(\rho_1) (\cdot, d^-) \sum_{k: I_2 \subseteq P_k} \tilde \theta_k\,.
\eq 
On the one hand, by \eqref{e:bicicletta} we trivially have $\sum_{k: I_2 \subseteq P_k} \tilde \theta_k\geq\ee$. On the other hand, by linearity $z:= \sum_{k: I_2 \subseteq P_k} \theta_k$ solves
\[
\begin{cases}
\partial_t\left(\rho_1z\right)+\partial_x\left(v(\rho_1)\rho_1z\right)=0 & \text{on }]0,T[\times I_1 \\
z(\cdot,a)=\sum_{k: I_2 \subseteq P_k} \overline\theta_k,  \quad 
z(0,\cdot)=\sum_{k: I_2 \subseteq P_k} \theta_{0k}
\end{cases}
\]
and hence~\cite[Theorem 1.5]{DovettaMarconiSpinolo} yields the existence of $\tilde z\in BV(]0,T[)$ such that
\[
\mathrm{Tr} \left[v(\rho_1)\rho_1\sum_{k: I_2 \subseteq P_k} \theta_k\right](\cdot,d^-)=\tilde z \mathrm{Tr}[v(\rho_1)\rho_1](\cdot,d^-)
\stackrel{\eqref{e:maggio}}{=}
\widetilde z g(\rho_1)(\cdot,d^-)
\qquad\text{a.e. on }]0,T[.
\]
The linearity and the uniqueness of the distributional traces and \eqref{e:nave}  imply
\[
\tilde z =\sum_{k: I_2 \subseteq P_k} \tilde\theta_k\qquad\text{a.e. on }]0,T[
\stackrel{\eqref{e:sommaunoid},~\eqref{e:sommaunobd}}{\implies} \ee \leq 
\tilde z = \sum_{k: I_2 \subseteq P_k} \tilde\theta_k \leq1\qquad\text{a.e. on } ]0,T[ 
\]
and owing to~\eqref{e:nave} and \eqref{e:penna} this yields     
 \be \label{e:metropolitana}    
   \ee^2 v(\ee)
     \leq   g(\rho_2) (\cdot, d^+) \leq v (\rho^\ast - \ee) [\rho^\ast - \ee]. 
\eq
Also,  owing to~\eqref{e:penna}, $g(\rho_2) (\cdot, d^+) \in BV (]0, T[)$. We now recall the construction in {\sc Step 2} of \S\ref{sss:exsd} and in particular that the boundary datum $\bar \rho_2$ for $\rho_2$ is the unique function comprised between $0$ and $\rho^\ast$ such that $g(\bar \rho_2) =  g(\rho_2) (\cdot, d^+)$ a.e. on $]0, T[$. By using~\eqref{e:metropolitana} and the chain rule for BV functions we infer that $\bar \rho_2 \in BV (]0, T[)$. Also, $\tilde \ee \leq \bar \rho_2 \leq \rho^\ast -  \ee$ for a suitable constant $\tilde \ee >0$ which could be explicitely computed if needed. 
Next, we recall the construction in {\sc Step 3} of~\S\ref{sss:exsd} and, by using formula~\eqref{e:thetakb} and recalling that $g(\bar \rho_2)$ is bounded away from $0$, we conclude that the boundary datum for $\theta_k$ at $d$ is 
\be \label{e:cremisi}
   \theta_{kb} = \frac{ \mathrm{Tr} [v(\rho_1) \rho_1] (\cdot, d^-)}{g (\bar \rho_2)}
\eq 
and, owing to~\eqref{e:nave} and~\eqref{e:metropolitana}, this yields $\theta_{kb} \in BV (]0, T[)$ and $\bar \ee \leq \theta_{kb}\leq 1$ for some suitable constant $\bar \ee$ which could be explicitely computed, if needed. We can repeat the argument at {\sc Step 1} and {\sc Step 2} and conclude that $\rho_2 \in BV (]0, T[ \times I_2)$, $\theta_k \in BV (]0, T[ \times I_2)$. \\
{\sc Step 4:} by iterating the argument at the previous steps we conclude that, for every $i=1, \dots, n_k$, $\rho_i \in BV (]0, T[ \times I_i) $ and $\theta_k \in BV (]0, T[ \times I_i)$. To conclude that actually $\theta_k \in BV (]0, T[ \times P_k)$ we apply a ``gluing theorem" for $BV$ functions, see~\cite[Corollary 3.89]{AmbrosioFuscoPallara}. 
\subsection{Proof of Corollary~\ref{c:stability}}
By the uniqueness part of Theorem~\ref{t:exunisd} it suffices to establish the stability of the solution constructed in \S\ref{sss:exsd}. We fix $k=1, \dots, m$ and as in \S\ref{sss:exsd} we term $I_1, \dots, I_{n_k}$ the consecutive roads composing the path $P_k$.  \\
{\sc Step 1:} we show that $\rho^n_1 \to \rho_1$ in $L^1(]0, T[ \times I_1)$. To this end, it suffices to recall the stability of the entropy admissible solutions of initial-boundary value problems with respect to perturbations in the data, see for instance~\cite[Theorem 4.3]{ColomboRossi}. Owing to Lemma~\ref{l:malva} we also have 
\be \label{e:settembre}
     g(\rho^n_1)(\cdot, d^-) \to g (\rho_1) (\cdot, d^-) \; \text{in $L^1 (]0, T[)$},
\eq
where $d$ denotes the second extremum of the interval $I_1$. \\
{\sc Step 2:} we show that $\rho^n_1 \theta^n_k \to \rho_1 \theta_k$ in 
$L^1 (]0, T[ \times I_1)$.  \\
{\sc Step 2A:} we show that there is a sequence $\theta^n_k$ solving the 
initial-boundary value problem
\be \label{e:ottobre}
    \left\{
    \begin{array}{ll}
    \partial_t [\rho^n_1 \theta^n_k ] + 
    \partial_x [v(\rho^n_1) \rho^n_1 \theta^n_k] = 0 \\
    \theta^n_k(0, \cdot) = \theta^n_{k0}, \quad \theta^n_k ( \cdot, a) = \bar \theta_k^n
    \end{array}
   \right.
\eq
on $]0, T[\times I_1$ such that $\theta^n_k \weaks \theta_k$ weakly$^\ast$ in $L^\infty(]0, T[ \times I_1)$, where $\theta_k$ is a solution of the initial-boundary value problem~\eqref{e:te},\eqref{e:id2},\eqref{e:bd2}. To this end, we recall that $\theta^n_k$ is constructed in  \S\ref{sss:exsd} by applying~\cite[Theorem 1.2]{DovettaMarconiSpinolo}, and this yields a $L^\infty$ bound on $\theta_k^n$ in terms of $\| \theta^n_{k 0} \|_{L^\infty}$ and $\| \bar \theta^n_k \|_{L^\infty}$. Owing to~\eqref{e:sommaunoid} and~\eqref{e:sommaunobd} we conclude that 
$\| \theta^n_k \|_{L^\infty}$ is uniformly bounded and hence weakly$^\ast$ converges (up to subsequences) to some limit function $\theta_k$. By using {\sc Step 1} we can pass to the limit in the distributional formulation of~\eqref{e:ottobre} and conclude that 
$\theta_k$ is a solution of~\eqref{e:te},~\eqref{e:id2},~\eqref{e:bd2}. \\
{\sc Step 2B:} we show that $\rho_1 (\theta^n_k)^2 \weaks \rho_1 (\theta_k)^2$ 
weakly$^\ast$ in $L^\infty (]0, T[ \times I_1)$. Owing to the proof of~\cite[Proposition 3.11]{DovettaMarconiSpinolo}, $(\theta^n_k)^2$ is a solution of the initial-boundary value problem~\eqref{e:ottobre} with $\theta^n_{0k}$ and $\bar \theta_k^n$ replaced by $(\theta^n_{0k})^2$ and $(\bar \theta_k^n)^2$, respectively.  Also, $\| (\theta_k^n)^2\|_{L^\infty}$ is uniformly bounded because so is $\| \theta_k^n \|_{L^\infty}$ and hence, up to subsequences, $(\theta_k^n)^2$ weakly$^\ast$ converges in $L^\infty (]0, T[ \times I_1)$ to some limit function $\gamma$. Since $\rho^n_1 \to \rho_1$ strongly in $L^1 (]0, T[ \times I_1)$ by {\sc Step 1}, this implies that $\rho^n_1 (\theta_k^n)^2 \weaks \rho_1 \gamma$  weakly$^\ast$ in $L^\infty (]0, T[ \times I_1)$. By passing to the limit in the distributional formulation we get that $\rho_1 \gamma$ is a solution of~\eqref{e:ottobre} with $\theta^n_{0k}$ and $\bar \theta_k^n$ replaced by $(\theta_{0k})^2$ and $(\bar \theta_k)^2$, respectively. Since by~\cite[Proposition 3.11]{DovettaMarconiSpinolo} $\rho_1 (\theta_k)^2$ is a solution of the same initial-boundary value problem, then by the uniqueness part of~\cite[Theorem 1.2]{DovettaMarconiSpinolo} we have $\rho_1 \gamma = \rho_1 (\theta_k)^2$. \\
{\sc Step 2C:} we conclude the proof of {\sc Step 2}.  We recall that $\rho^n_1 \to \rho_1$ strongly in $L^1 (]0, T[ \times I_1)$ owing to {\sc Step 1}: by {\sc Step 2A}, this implies that $\rho_1 \theta^n_k \weaks \rho_1 \theta_k$ weakly$^\ast$ in $L^\infty (]0, T[ \times I_1)$ and henceforth weakly in $L^2 (]0, T[ \times I_1)$.  By {\sc Step 2B}, it also implies that $(\rho_1 \theta^n_k)^2 \weaks (\rho_1 \theta_k)^2$ weakly$^\ast$ in $L^\infty (]0, T[ \times I_1)$ and henceforth weakly in $L^2 (]0, T[ \times I_1)$. We conclude that  $\rho_1 \theta^n_k \to \rho_1 \theta_k$ strongly in $L^2 (]0, T[ \times I_1)$ and henceforth strongly in $L^1 (]0, T[ \times I_1)$. \\
{\sc Step 3:} we show that $\bar \rho^n_2 \to \bar \rho_2$ strongly in $L^1 (]0, T[)$, where $\bar \rho^n_2$ and $\bar \rho_2$ are the boundary data for $\rho_2^n$ and $\rho_2$, respectively. We recall that, by the construction in~\S\ref{sss:exsd}, $\bar \rho_2$ is the function confined between $0$ and $\rho^\ast$ such that 
$$
     g(\bar \rho_2) = \sum_{k: I_2 \subseteq P_k} \mathrm{Tr}[v (\rho_1) \rho_1 \theta_k] (\cdot, d^-),
$$
where $d$ is the second extremum of $I_1$. Hence, to establish the convergence 
$\bar \rho^n_2 \to \bar \rho_2$ it suffices to show that $ \mathrm{Tr}[v (\rho^n_1) \rho^n_1 \theta^n_k] (\cdot, d^-) \to  \mathrm{Tr}[v (\rho_1) \rho_1 \theta_k] (\cdot, d^-)$ strongly in $L^1 (]0, T[)$ for every $k=1, \dots, m$. \\
{\sc Step 3A:} we show that  $ \mathrm{Tr}[v (\rho^n_1) \rho^n_1 \theta^n_k] (\cdot, d^-) \weaks  \mathrm{Tr}[v (\rho_1) \rho_1 \theta_k] (\cdot, d^-)$ weakly$^\ast$ in $L^\infty (]0, T[)$. Owing to~\cite[Proposition 3.2]{AmbrosioCrippaManiglia} and to the proof of~\cite[Lemma 3.3]{CrippaDonadelloSpinolo}, $\|  \mathrm{Tr}[v (\rho^n_1) \rho^n_1 \theta^n_k] \|_{L^\infty}$ is uniformly bounded in terms of $\| \rho^n_1 \|_{L^\infty}$ and $\| \theta_k^n \|_{L^\infty}$ and hence up to subsequences converges weakly$^\ast$ in $L^\infty (]0, T[)$ to  some function $\delta$. By recalling {\sc Step 1} and {\sc Step 2} and passing to the limit in the definition of distributional trace we get that 
$\delta= \mathrm{Tr}[v (\rho_1) \rho_1 \theta_k]$.  \\
{\sc Step 3B:} by recalling {\sc Step 2B} we can repeat the same argument as in {\sc Step 3A} and conclude that $ \mathrm{Tr}[v (\rho^n_1) \rho^n_1 (\theta^n_k)^2] (\cdot, d^-) \weaks  \mathrm{Tr}[v (\rho_1) \rho_1 (\theta_k)^2] (\cdot, d^-)$ weakly$^\ast$ in $L^\infty (]0, T[)$. \\
{\sc Step 3C:} owing to the trace renormalization property given by~\cite[Theorem 4.2]{DovettaMarconiSpinolo} we have 
\begin{equation*}
\begin{split}
    \big(  \mathrm{Tr}[v (\rho^n_1) \rho^n_1 \theta^n_k] (\cdot, d^-)  \big)^2 
   &  \stackrel{\text{\cite[Theorem 4.2]{DovettaMarconiSpinolo}}}{=}  
    \mathrm{Tr}[v (\rho^n_1) \rho^n_1 (\theta^n_k)^2](\cdot, d^-)  \mathrm{Tr}[v (\rho^n_1) \rho^n_1] (\cdot, d^-) \\ &
    \stackrel{\eqref{e:maggio}}{=}
      \mathrm{Tr}[v (\rho^n_1) \rho^n_1 (\theta^n_k)^2] (\cdot, d^-)  
   g (\rho^n_1) (\cdot, d^-) .
\end{split}
\end{equation*}
Owing to {\sc Step 3B} and~\eqref{e:settembre} this yields that 
$\big(  \mathrm{Tr}[v (\rho^n_1) \rho^n_1 \theta^n_k] (\cdot, d^-)  \big)^2$ weakly$^\ast$ converges in $L^\infty (]0, T[)$ to $\big(  \mathrm{Tr}[v (\rho^n_1) \rho^n_1 \theta^n_k] (\cdot, d^-)  \big)^2$ and by recalling {\sc Step 3A} and repeating the same argument as in {\sc Step 2C}  it implies that $ \mathrm{Tr}[v (\rho^n_1) \rho^n_1 \theta^n_k] (\cdot, d^-)$  strongly converges in $L^1 (]0, T[)$ to $  \mathrm{Tr}[v (\rho_1) \rho_1 \theta_k] (\cdot, d^-)$. \\ 
{\sc Step 4:} owing to {\sc Step 3}, we can repeat the same argument as in {\sc Step 1} and conclude that $\rho_2^n \to \rho_2$ in $L^1(]0, T[ \times I_2)$. Next, we recall that, by the analysis in \S\ref{sss:exsd},  $\theta_k$ is 
defined on $]0, T[ \times I_2$ by solving an initial-boundary value problem analogous to~\eqref{e:thetakb} and with boundary datum $\theta_{kb}$ given by~\eqref{e:thetakb}. We now want to show that $\rho^n_2 \theta_k^n \to \rho_2 \theta_k$. Note that to repeat the same argument as in {\sc Step 2} it suffices to show that 
\be \label{e:dicembre}
     \theta^n_{kb} g(\bar \rho^n_2)  \to 
      \theta_{kb} g(\bar \rho_2) , \quad 
     (\theta^n_{kb})^2 g(\bar \rho^n_2)  \to 
      (\theta_{kb})^2 g(\bar \rho_2) , \; 
    \text{in $L^1 (]0, T[)$}. 
\eq
{\sc Step 4A:} we establish the first convergence result in~\eqref{e:dicembre}. It suffices to recall that, owing to~\eqref{e:thetakb}, $\theta_{kb} g(\bar \rho_2)  = \mathrm{Tr} [v(\rho_1) \rho_1 \theta_k] (\cdot, d^-)$ and then recall {\sc Step 3C}. \\
{\sc Step 4B:} we establish the second convergence result in~\eqref{e:dicembre}. 
We want to apply the Lebesgue Dominated Convergence Theorem. First, we recall that $|\theta^n_{kb}| \leq 1$ owing to~\eqref{e:mela} and we conclude that 
$\|  (\theta^n_{kb})^2 g(\rho^n_2) (\cdot, d^+) \|_{L^\infty}$ is uniformly bounded. We are left to establish the a.e.  pointwise convergence. First, we recall {\sc Step 3C} and conclude that, up to subsequences, $ \mathrm{Tr}[v (\rho^n_1) \rho^n_1 \theta^n_k] (t, d^-)$ converges to $ \mathrm{Tr}[v (\rho_1) \rho_1 \theta_k] (t, d^-)$  and $g(\bar \rho^n_2 (t))$ converges to $g(\bar \rho_2 (t))$ for a.e. $t \in ]0, T[$. We fix a $t \in ]0, T[$ such that the above convergence results hold true and we distinguish between two cases. If $g(\bar \rho_2 (t)) \neq 0$, then for $n$ sufficiently large
$$
     (\theta^n_{kb}(t) )^2 g(\bar \rho^n_2 (t)) = \frac{\Big(\mathrm{Tr}[v (\rho^n_1) \rho^n_1 \theta^n_k] (t, d^-)\Big)^2}{ g(\bar \rho^n_2 (t))}  \to 
      \frac{\Big( \mathrm{Tr}[v (\rho_1) \rho_1 \theta_k ] (t, d^-)\Big)^2}{ g(\bar \rho_2 (t))} 
     =  (\theta_{kb}(t) )^2 g(\bar \rho_2 (t))
     \quad 
    \text{as $n \to + \infty$.}
$$
If $g(\bar \rho_2 (t)) = 0$ we argue as follows: since $|\theta^n_{kb}(t)|\leq 1$, then $|(\theta^n_{kb}(t) )^2 g(\bar \rho^n_2 (t))| \leq g(\bar \rho^n_2 (t))$
and hence it converges to $0$ as $n \to + \infty$. This concludes the proof of the a.e. pointwise convergence and hence of {\sc Step 4}. \\
{\sc Step 5:} by iterating the argument at the previous steps we establish the desired stability result.   

\section*{Acknowledgments}
The authors wish to thank Maya Briani and Mauro Garavello for several interesting discussions. S.D. and L.V.S. are partially supported by the INDAM-GNAMPA project 2020 \emph{Modelli differenziali alle derivate parziali per fenomeni di interazione.} E.M. is supported by the SNF Grant 182565. Part
of this work was done while S.D. was affiliated to IMATI-CNR, Pavia.
\bibliographystyle{plain}
\bibliography{sd}

\begin{thebibliography}{10}

\bibitem{AmbrosioCrippaManiglia}
L.~Ambrosio, G.~Crippa, and S.~Maniglia.
\newblock Traces and fine properties of a {$BD$} class of vector fields and
  applications.
\newblock {\em Ann. Fac. Sci. Toulouse Math. (6)}, 14(4):527--561, 2005.

\bibitem{AmbrosioDL}
L.~{Ambrosio} and C.~{De Lellis}.
\newblock {A note on admissible solutions of 1D scalar conservation laws and 2D
  Hamilton-Jacobi equations}.
\newblock {\em {J. Hyperbolic Differ. Equ.}}, 1(4):813--826, 2004.

\bibitem{AmbrosioFuscoPallara}
L.~Ambrosio, N.~Fusco, and D.~Pallara.
\newblock {\em Functions of bounded variation and free discontinuity problems}.
\newblock Oxford Mathematical Monographs. The Clarendon Press, Oxford
  University Press, New York, 2000.

\bibitem{Anzellotti}
G.~{Anzellotti}.
\newblock {Pairings between measures and bounded functions and compensated
  compactness}.
\newblock {\em {Ann. Mat. Pura Appl. (4)}}, 135:293--318, 1983.

\bibitem{BLN79}
C.~{Bardos}, A.-Y. {Le Roux}, and J.~C. {Nedelec}.
\newblock {First order quasilinear equations with boundary conditions}.
\newblock {\em {Commun. Partial Differ. Equations}}, 4:1017--1034, 1979.

\bibitem{BellomoDogbe}
N.~{Bellomo} and C.~{Dogbe}.
\newblock {On the modeling of traffic and crowds: a survey of models,
  speculations, and perspectives}.
\newblock {\em {SIAM Rev.}}, 53(3):409--463, 2011.

\bibitem{BianchiniMarconi}
S.~{Bianchini} and E.~{Marconi}.
\newblock {On the structure of \({L^\infty}\)-entropy solutions to scalar
  conservation laws in one-space dimension}.
\newblock {\em {Arch. Ration. Mech. Anal.}}, 226(1):441--493, 2017.

\bibitem{BGJ}
C.~{Bourdarias}, M.~{Gisclon}, and S.~{Junca}.
\newblock {Fractional \(BV\) spaces and applications to scalar conservation
  laws}.
\newblock {\em {J. Hyperbolic Differ. Equ.}}, 11(4):655--677, 2014.

\bibitem{Bressan_book}
A.~{Bressan}.
\newblock {\em {Hyperbolic systems of conservation laws. The one-dimensional
  Cauchy problem}}, volume~20.
\newblock Oxford: Oxford University Press, 2000.

\bibitem{BressanCanicGaravelloHertyPiccoli}
A.~{Bressan}, S.~{\v{C}ani\'c}, M.~{Garavello}, M.~{Herty}, and B.~{Piccoli}.
\newblock {Flows on networks: recent results and perspectives}.
\newblock {\em {EMS Surv. Math. Sci.}}, 1(1):47--111, 2014.

\bibitem{BressanNguyen}
A.~{Bressan} and K.~T. {Nguyen}.
\newblock {Conservation law models for traffic flow on a network of roads}.
\newblock {\em {Netw. Heterog. Media}}, 10(2):255--293, 2015.

\bibitem{BressanYu}
A.~{Bressan} and F.~{Yu}.
\newblock {Continuous Riemann solvers for traffic flow at a junction}.
\newblock {\em {Discrete Contin. Dyn. Syst.}}, 35(9):4149--4171, 2015.

\bibitem{BrianiCristiani}
M.~{Briani} and E.~{Cristiani}.
\newblock {An easy-to-use algorithm for simulating traffic flow on networks:
  theoretical study}.
\newblock {\em {Netw. Heterog. Media}}, 9(3):519--552, 2014.

\bibitem{ChenZiemerTorres}
G.-Q. {Chen}, W.~P. {Ziemer}, and M.~{Torres}.
\newblock {Gauss-Green theorem for weakly differentiable vector fields, sets of
  finite perimeter, and balance laws}.
\newblock {\em {Commun. Pure Appl. Math.}}, 62(2):242--304, 2009.

\bibitem{CocliteGaravelloPiccoli}
G.~M. {Coclite}, M.~{Garavello}, and B.~{Piccoli}.
\newblock {Traffic flow on a road network}.
\newblock {\em {SIAM J. Math. Anal.}}, 36(6):1862--1886, 2005.

\bibitem{ColomboRossi}
R.~M. {Colombo} and E.~{Rossi}.
\newblock {Rigorous estimates on balance laws in bounded domains}.
\newblock {\em {Acta Math. Sci., Ser. B, Engl. Ed.}}, 35(4):906--944, 2015.

\bibitem{CrippaDonadelloSpinolo}
G.~{Crippa}, C.~{Donadello}, and L.~V. {Spinolo}.
\newblock {Initial-boundary value problems for continuity equations with BV
  coefficients}.
\newblock {\em {J. Math. Pures Appl. (9)}}, 102(1):79--98, 2014.

\bibitem{CrippaOttoW}
G.~{Crippa}, F.~{Otto}, and M.~{Westdickenberg}.
\newblock {Regularizing effect of nonlinearity in multidimensional scalar
  conservation laws}.
\newblock In {\em {Transport equations and multi-D hyperbolic conservation
  laws}}, pages 77--128. Berlin: Springer, 2008.

\bibitem{Daf}
C.~M. Dafermos.
\newblock Regularity and large time behaviour of solutions of a conservation
  law without convexity.
\newblock {\em Proc. Roy. Soc. Edinburgh Sect. A}, 99(3-4):201--239, 1985.

\bibitem{Dafermos:book}
C.~M. Dafermos.
\newblock {\em Hyperbolic conservation laws in continuum physics}, volume 325
  of {\em Grundlehren der Mathematischen Wissenschaften [Fundamental Principles
  of Mathematical Sciences]}.
\newblock Springer-Verlag, Berlin, fourth edition, 2016.

\bibitem{DL07}
C.~De~Lellis.
\newblock Notes on hyperbolic systems of conservation laws and transport
  equations.
\newblock {\em Handbook of Differential Equations: Evolutionary Equations},
  3:277--382, 2007.

\bibitem{DovettaMarconiSpinolo}
S.~{Dovetta}, E.~{Marconi}, and L.V. {Spinolo}.
\newblock {Initial-boundary value problems for merely bounded nearly
  incompressible vector fields in one space dimension}.
\newblock {\em {Preprint ArXiv:2105.11157}}, 2021.

\bibitem{GaravelloHanPiccoli}
M.~Garavello, K.~Han, and B.~Piccoli.
\newblock {\em Models for Vehicular Traffic on Networks}.
\newblock American Institute of Mathematical Sciences, 2016.

\bibitem{GaravelloMarcellini}
M.~{Garavello} and F.~{Marcellini}.
\newblock {Global weak solutions to the Cauchy problem for a two-phase model at
  a node}.
\newblock {\em {SIAM J. Math. Anal.}}, 52(2):1567--1590, 2020.

\bibitem{GaravelloPiccoli:CMS}
M.~{Garavello} and B.~{Piccoli}.
\newblock {Source-destination flow on a road network}.
\newblock {\em {Commun. Math. Sci.}}, 3(3):261--283, 2005.

\bibitem{GaravelloPiccoli:AIHP}
M.~{Garavello} and B.~{Piccoli}.
\newblock {Conservation laws on complex networks}.
\newblock {\em {Ann. Inst. Henri Poincar\'e, Anal. Non Lin\'eaire}},
  26(5):1925--1951, 2009.

\bibitem{HW}
M.~Hilliges and W.~Weidlich.
\newblock A phenomenological model for dynamic traffic flow in networks.
\newblock {\em Transportation Research Part B.}, 29:407--431, 1995.

\bibitem{HoldenRisebro}
H.~{Holden} and N.~H. {Risebro}.
\newblock {A mathematical model of traffic flow on a network of unidirectional
  roads}.
\newblock {\em {SIAM J. Math. Anal.}}, 26(4):999--1017, 1995.

\bibitem{Jabin}
P.-E. {Jabin}.
\newblock {Some regularizing methods for transport equations and the regularity
  of solutions to scalar conservation laws}.
\newblock {\em {S\'emin. \'Equ. D\'eriv. Partielles, \'Ec. Polytech., Cent.
  Math. Laurent Schwartz, Palaiseau}}, 2008-2009:ex, 2010.

\bibitem{Kruzkov}
S.~N. Kru{\v{z}}kov.
\newblock First order quasilinear equations with several independent variables.
\newblock {\em Mat. Sb. (N.S.)}, 81 (123):228--255, 1970.

\bibitem{KwonVasseur}
Y.-S. {Kwon} and A.~{Vasseur}.
\newblock {Strong traces for solutions to scalar conservation laws with general
  flux}.
\newblock {\em {Arch. Ration. Mech. Anal.}}, 185(3):495--513, 2007.

\bibitem{LW}
M.~Lighthill and G.~Whitham.
\newblock On kinematic waves. {II. A} theory of traffic flow on long crowded
  roads.
\newblock {\em Proceedings of the Royal Society of London: Series A.},
  229:317--345, 1955.

\bibitem{Marconi}
E.~{Marconi}.
\newblock {Regularity estimates for scalar conservation laws in one space
  dimension}.
\newblock {\em {J. Hyperbolic Differ. Equ.}}, 15(4):623--691, 2018.

\bibitem{Oleinik}
O.~A. Ole{\u\i}nik.
\newblock Discontinuous solutions of non-linear differential equations.
\newblock {\em Uspehi Mat. Nauk (N.S.)}, 12(3(75)):3--73, 1957.

\bibitem{Otto}
F.~{Otto}.
\newblock {Initial-boundary value problem for a scalar conservation law}.
\newblock {\em {C. R. Acad. Sci., Paris, S\'er. I}}, 322(8):729--734, 1996.

\bibitem{Panov:traces}
E.~Yu. {Panov}.
\newblock {Existence of strong traces for quasi-solutions of multidimensional
  conservation laws}.
\newblock {\em {J. Hyperbolic Differ. Equ.}}, 4(4):729--770, 2007.

\bibitem{R}
P.~I. Richards.
\newblock Shock waves on the highway.
\newblock {\em Operations Res.}, 4:42--51, 1956.

\bibitem{Rossi}
E.~{Rossi}.
\newblock {Definitions of solutions to the IBVP for multi-dimensional scalar
  balance laws}.
\newblock {\em {J. Hyperbolic Differ. Equ.}}, 15(2):349--374, 2018.

\bibitem{Schaeffer}
D.~G. Schaeffer.
\newblock A regularity theorem for conservation laws.
\newblock {\em Advances in Math.}, 11:368--386, 1973.

\bibitem{Serre}
D.~{Serre}.
\newblock {\em {Systems of conservation laws 2. Geometric structures,
  oscillations, and initial-boundary value problems. Translated from the French
  by I. N. Sneddon}}.
\newblock Cambridge: Cambridge University Press, 2000.

\bibitem{Vasseur}
A.~{Vasseur}.
\newblock {Strong traces for solutions of multidimensional scalar conservation
  laws}.
\newblock {\em {Arch. Ration. Mech. Anal.}}, 160(3):181--193, 2001.

\end{thebibliography}
\end{document}